\documentclass[a4paper,12pt]{extarticle}
\usepackage{amsmath}
\usepackage{amssymb}
\usepackage{amsthm}

\usepackage{hyperref}
\hypersetup{
    colorlinks,
    citecolor=blue,
    filecolor=blue,
    linkcolor=blue,
    urlcolor=blue
}
\allowdisplaybreaks

\begin{document}
\newtheorem{theor}{Theorem}[section] 
\newtheorem{prop}[theor]{Proposition} 
\newtheorem{cor}[theor]{Corollary}
\newtheorem{lemma}[theor]{Lemma}
\newtheorem{sublem}[theor]{Sublemma}
\newtheorem{defin}[theor]{Definition}
\newtheorem{conj}[theor]{Conjecture}
\theoremstyle{definition}
\newtheorem{Bem}[theor]{Remark}
\newtheorem{ex}[theor]{Example}
\hfuzz2cm

\gdef\mtr#1{#1}
\def\bz{\mbox{\boldmath$\zeta$\unboldmath}}
\def\bzs{\mbox{\boldmath$\zeta'$\unboldmath}}
\def\ba{\mbox{\boldmath$a$\unboldmath}}
\def\odd{{\rm odd}}
\gdef\ra{\rightarrow}
\gdef\Bbb{\bf }
\gdef\P1{{\Bbb P}^{1}_{D}}
\gdef\dbd{dd^{c}}
\gdef\a{\alpha}
\gdef\ca{ch_{g}(\alpha)}
\gdef\rl{{\Lambda}}
\gdef\CT{CT}
\gdef\refeq#1{(\ref{#1})}
\gdef\mn{{\mu_{n}}}
\gdef\zn{{\Bbb Z}/(n)}
\gdef\umn{^{\mn}}
\gdef\lmn{_{\mn}}
\gdef\blb{{\big(}}
\gdef\brb{{\big)}}
\gdef\Hom{{\rm Hom}}
\def\Trs{{\rm Tr_s\,}}
\def\Tr{{\rm Tr\,}}
\def\End{{\rm End}}
\def\eq{equivariant }
\def\Td{{\rm Td}}
\def\ch{{\rm ch}}
\def\torus{{\cal T}}
\def\Proj{{\rm Proj}}
\def\bmn{{R_{n}}}
\def\uexp#1{{{\rm e}^{#1}}}
\def\Spec{{\rm Spec}\,}
\def\Qb{\mtr{\Bbb Q}}
\def\Cn{{\bf C}_n}
\def\Zn{{\Bbb Z}/n}
\def\mod{{\rm mod}}
\def\ac1{\ar{\rm c}_{1}}
\def\boxtimes{{\otimes_{\rm Ext}}}
\def\Qmm{{{\Bbb Q}(\mu_{m})}}
\def\NIm#1{{\rm Im}(#1)}
\def\NRe#1{{\rm Re}(#1)}
\def\rk{{\rm rk}\,}
\def\phi{\varphi}
\def\deg{{{\rm deg}\,}}
\def\Td{{\rm Td}}
\def\ch{{\rm ch}}
\def\Hom{{\rm Hom}}
\def\Bil{{\rm Bil}}
\def\End{{\rm End}}
\def\Sym{\Sym}
\gdef\endProof{\hfill$\square$\par\bigskip\noindent}
\def\Im{{\rm Im}\,}
\def\Re{{\rm Re}\,}
\def\im{{\rm im}\,}
\def\ker{{\rm ker}\,}
\def\Tr{{\rm Tr}\,}
\def\Eig{{\rm Eig}\,}
\def \x{\times}
\def \BN{{\mathbb N}}
\def \BZ{{\mathbb Z}}
\def \BQ{{\mathbb Q}}
\def \BR{{\mathbb R}}
\def \BC{{\mathbb C}}
\def \BH{{\mathbb H}}
\def\BP{{\mathbb P}}
\def\Sp{{\bf Sp}}
\def\GL{\mbox{\bf GL}}
\def\SL{\mbox{\bf SL}}
\def\SO{\mbox{\bf SO}}
\def\O{\mbox{\bf O}}
\def\SU{\mbox{\bf SU}}
\def\so{{\frak {so}}}
\def\Aut{\mbox{\rm Aut}}
\def\sign{\mbox{\rm sign\,}}
\def\ord{{\rm ord}}
\def \<{\langle}
\def \>{\rangle}
\def \Ad {{\rm Ad}}
\def \ad {{\rm ad}}
\def\vp{\varphi}
\def\vep{\varepsilon}
\def\theta{\vartheta}
\def\Ric{{\rm Ric}}
\def\vol{{\rm vol}}
\def\diam{{\rm diam}\,}
\def\xx{{\bf x}}
\def\Pf{{\rm Pf}}
\def\grad{{\rm grad}\,}
\def\E{E}
\def\F{{\bf F}} \def\eps{\varepsilon}
\def\div{{\rm div\,}}
\def\divv{{\nabla^*}}
\def\Sym{{\rm Sym}}
\def\Cas{{\rm Cas}}
\def\f{b}
\def\top{{\rm top}}
\def\J{J}
\def\L{{\cal L}}
\def\M{N}
\def\la{a}
\def\ms{n}

\def\bz{\mbox{\boldmath$\zeta$\unboldmath}}
\def\bzs{\mbox{\boldmath$\zeta'$\unboldmath}}
\def\odd{{\rm odd}}

\author{
Kai K\"ohler\footnote{Mathematisches Institut/
Universit\"atsstr. 1/
Geb\"aude 25.22/
D-40225 D\"usseldorf/
koehler@math.uni-duesseldorf.de}} 
\title{The full asymptotic expansion of analytic torsion on homogeneous spaces}
\maketitle
\begin{abstract}
The full asymptotic expansion of the equivariant complex Ray-Singer torsion for high powers of line bundles on symmetric spaces is given in an explicit form. In the case of isolated fixed points this expansion is given for general complex homogeneous spaces. Furthermore the full asymptotic expansion is given for the complex analytic torsion form associated to fibrations by projective curves. The expansions are compared with results by Bismut--Vasserot, Finski and Puchol. The results are applied to lattice representations of Chevalley groups.
\end{abstract}
\begin{center}
2020 Mathematics Subject Classification: 58J52, 11M35, 14M17, 20G05
\end{center}
\thispagestyle{empty}
\setcounter{page}{1}
\tableofcontents

\section{Introduction}

Ray-Singer's complex analytic torsion $T(M,E)\in\BR$ is an invariant associated to Hermitian holomorphic vector bundles $E$ on compact Hermitian manifolds $M$. It is defined using regularised determinants of Laplace operators. Its primary application lies in the construction of the determinant of a direct image of Hermitian vector bundles within the context of Arakelov geometry. Analogous to the role played by the Kodaira vanishing theorem in algebraic geometry, describing the asymptotic behaviour of analytic torsion for increasing tensor powers $L^\ell$ of positive line bundles $L\to M$ implies arithmetic Hilbert-Samuel theorems in Arakelov geometry. The oldest such result \cite[Th. 8]{BV} was proven by Bismut and Vasserot, who computed the first two leading terms of the asymptotic expansion of the torsion in terms of curvatures and characteristic classes. Gillet and Soul\'e combined this result with their arithmetic Theorem of Riemann--Roch \cite[Th. 7]{GSarr} to obtain the first version of an arithmetic Hilbert-Samuel theorem \cite[Th. 8]{GSarr}. Since then, arithmetic Hilbert-Samuel theorems have played a fundamental and crucial role in Arakelov theory, emphasising the importance of the asymptotic expansion as a natural approach to them.

Bismut and Vasserot extended their result to symmetric powers of positive vector bundles of higher rank (\cite{BV2}). Finski proved that the full asymptotic expansion of $T(M,L^\ell)$ consists only of terms of the form $\ell^k$ and $\ell^k \log\ell$ and he gave a formula for the 3rd and 4th term in the asymptotic expansion (\cite{Fi}). He extended this result to orbifolds. Other extensions and applications to physics have been given by Berman (\cite{Ber}), Su (\cite{Su}) and Larra\'in-Hubach (\cite{La}).

Ray-Singer's analytic torsion has a form-valued generalisation to fibrations, the complex analytic torsion form (\cite{BK}) which can be used to define a full direct image of Hermitian vector bundles in Arakelov geometry. In \cite{Puchol} Puchol generalised Bismut--Vasserot's result to torsion forms and more general bundles than the symmetric powers considered in \cite{BV2}, given as another direct image. Puchol's main result has been extended to equivariant torsion forms by Te\ss mer (\cite{T}, preprint) for high powers of line bundles.

For the real analytic torsion the problem was studied by Bergeron and Venkatesh on locally symmetric spaces (\cite{BV}) and by Bismut, Ma, and Zhang for real analytic torsion forms (\cite{BMZ}). These have been further investigated by Liu (\cite{Liu1}, \cite{Liu2}) for locally symmetric spaces and Q. Ma (\cite{Mq}).

There is an equivariant version $T_t(M,E)$ of complex analytic torsion defined by the author in \cite{K1}. Consider a compact Lie group $G$ and a complex homogeneous space $G/K$. Let $L:=L_{\rho_K+\lambda}\to G/K$ be an equivariant line bundle associated to a weight $\lambda$, where $\rho$ is the Weyl vector, and choose $t\in G$. One goal of the present paper is to give an explicit formula for all terms in the asymptotic expansion of $T_t(G/K,\mtr L^{\ell})$ in term of roots and Weyl chambers, if $t$ acts with isolated fixed points. 

\begin{defin}\label{zetama}
Let $\Phi$ denote the Lerch zeta function and consider a function $P:\BZ\to\BC$ of the form
\begin{equation*}
P(k)=\sum_{j=0}^r c_jk^{\ms_j} e^{ik\phi_j}
\end{equation*}
with $r\in\BN_0$, $\ms_j\in\BN_0$, $c_j\in\BC$,
$\phi_j\in\BR$ for all $j$. Then for $p\in\BR$ we shall use the notations
\begin{eqnarray*}
\bz P&:=\sum_{j=0}^r c_je^{i\vp_j}\Phi(e^{i\vp_j},-\ms,1),\qquad\qquad\qquad\quad
\bzs P&:=\sum_{j=0}^r c_je^{i\vp_j}\frac\partial{\partial s}\Phi(e^{i\vp_j},-\ms,1)
\end{eqnarray*}
and $\bz_{m,a}\sum_{j=0}^rc_je^{ik\vp_j}:=\sum_{j=0}^mc_je^{i\vp_j}\Phi(e^{i\vp_j},-m,a+1)$.
\end{defin}

The following result is proved in Section \ref{SectionHomom}, using characters $\chi_{\rho+\ell\lambda+k\alpha}(t)$ of $G$-representations which for generic $t\in G$ are linear combinations of terms of the form $e^{i(\vp'\ell+\vp k)}$ with $\vp,\vp'\in\BR\setminus2\pi\BZ$. While the general results about complex analytic torsion cited above lack error estimates, in the case considered here the error term takes a very explicit form in Th.\ \ref{isolFixpkt} and error bounds of various levels of precision and simplicity are shown in Lemma \ref{errortermIsol}.
\begin{theor}\label{isolFixpkt1}
Let $M=G/K$ be a compact complex homogeneous space and let $\mtr L:=\mtr L_{\rho_K+\lambda}$ be a $G$-invariant holomorphic Hermitian line bundle on $M$. Assume that $\mtr L$ is positive in the sense that $\forall\alpha\in\Psi:\<\alpha^\vee,\lambda\>>0$. Fix a K\"ahler metric $g_{X_0}$ on $M$. Let $t\in G$ act on $M$ with isolated fixed points. Then the asymptotic behaviour of the equivariant analytic torsion for high powers of $\mtr L$ is given by
\begin{align*}
\lefteqn{
T_t((M,g_{X_0}),\mtr L_{\rho_K+\lambda}^\ell)
=
-\log\ell\cdot\bz\sum_{\alpha\in\Psi}\chi_{\rho+\ell\lambda+k\alpha}(t)
+\bzs\sum_{\alpha\in\Psi}\chi_{\rho+\ell\lambda+k\alpha}(t)
}\\
&-\sum_{\alpha\in\Psi} \log\frac{\langle\a^\vee,\lambda\rangle}{\alpha^\vee(X_0)}\cdot\bz
\chi_{\rho+\ell\lambda+k\alpha}(t)
+\tilde C\cdot
\chi_{\rho+\ell\lambda}(t)
\\
&+\sum_{\a\in\Psi}\sum_{m=1}^{\M-1}\frac{\bz_{m,\langle\a^\vee,\rho\rangle}\chi_{\rho+\ell\lambda+k\alpha}(t)}{(-\langle\a^\vee,\ell\lambda\rangle)^{m}m}
+R_1
\end{align*}
with $|R_1|<\frac{(\M-1)!}{(2\pi \ell c_2)^\M}c_1$ for explicitly given $c_1,c_2\in\BR^+$ which are independent of $\M$ and $\ell$.
The constant $\tilde C$ vanishes if $G$ does not contain a factor of type $G_2$, $F_4$ or $E_8$.
\end{theor}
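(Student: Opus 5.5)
We compute the equivariant torsion from the spectrum of the Kodaira Laplacian on $M=G/K$ and expand the resulting Lerch-type zeta values in $1/\ell$.

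\textbf{Step 1: the spectrum.} By Bott--Borel--Weil and the Peter--Weyl decomposition, $L^2(M,\Lambda^{0,\bullet}\otimes L^\ell)$ decomposes into $G$-isotypic pieces. On each piece the Kodaira Laplacian for the metric $g_{X_0}$ acts by a scalar. For a homogeneous K\"ahler metric this scalar is a difference of Casimir-type eigenvalues: $\|\mu+\rho\|^2_{X_0}-\|\rho+\ell\lambda\|^2_{X_0}$, with the norm induced by $X_0$.

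Using the author's earlier formula for equivariant torsion on homogeneous spaces (cf. \cite{K1} and the homogeneous-space computations it builds on), we rewrite the alternating sum over form degrees and representations. After cancellation it becomes a sum over roots $\alpha\in\Psi$ and integers $k\ge1$ of characters $\chi_{\rho+\ell\lambda+k\alpha}(t)$, weighted by the logarithmic derivative at $s=0$ of
\[
\bigl(k\,\alpha^\vee(X_0)^{-1}(\langle\alpha^\vee,\rho+\ell\lambda\rangle+k)\bigr)^{-s}.
\]
Representations that are not of this simple "string" form contribute only a finite, $\ell$-polynomial correction proportional to $\chi_{\rho+\ell\lambda}(t)$. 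This correction is the source of $\tilde C$. I would verify case by case that it vanishes unless root strings of length $\ge3$ relative to the relevant pairings occur, which happens only for $G_2$, $F_4$ and $E_8$.

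\textbf{Step 2: splitting the logarithm.} For isolated fixed points and generic $t$, each $\chi_{\rho+\ell\lambda+k\alpha}(t)$ is, via the Weyl character formula, a finite sum $\sum_j c_j e^{i(\varphi'_j\ell+\varphi_j k)}$ with $\varphi_j\notin2\pi\BZ$. The relevant zeta functions are therefore Lerch/Hurwitz series $\sum_k e^{ik\varphi}k^{-s}(k+a)^{-s}$ with $a=\langle\alpha^\vee,\rho+\ell\lambda\rangle$, and these converge nicely because $e^{i\varphi}\ne1$. We take the $s$-derivative at $0$ and split
\[
\log\bigl(k(k+a)\bigr)=2\log k+\log a+\log(1+k/a)
\]
in the appropriate regime, or use $\log(k+a)$ directly with a Hurwitz-type shift.
\begin{itemize}
\item The $\log k$ terms give $\bzs$.
\item The constant $-\log\bigl(\langle\alpha^\vee,\lambda\rangle/\alpha^\vee(X_0)\bigr)$ combined with $-\log\ell$ multiplies $\bz$, yielding the first three displayed terms.
\end{itemize}

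\textbf{Step 3: the $1/\ell$ expansion.} We expand the remaining term $\partial_s\Phi(e^{i\varphi},s,\langle\alpha^\vee,\rho\rangle+\ell\langle\alpha^\vee,\lambda\rangle+1)$ in powers of $1/(\langle\alpha^\vee,\ell\lambda\rangle)$. Writing $\Phi(z,s,a+x)$ and Taylor-expanding in the shift $x$, equivalently expanding $\log(1+(k+\langle\alpha^\vee,\rho\rangle)/(\ell\langle\alpha^\vee,\lambda\rangle))$, gives coefficients $(-1)^{m-1}/m$ times $\sum_k e^{ik\varphi}(k+\langle\alpha^\vee,\rho\rangle)^m$. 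After zeta regularisation these equal $\bz_{m,\langle\alpha^\vee,\rho\rangle}$, producing the sum over $m=1,\dots,N-1$.

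\textbf{Main obstacle: the error bound on $R_1$.} The series is only asymptotic, so the remainder after $N-1$ terms must be controlled rigorously. I would write $\Phi(e^{i\varphi},s,b)$ via its integral representation $\frac1{\Gamma(s)}\int_0^\infty \frac{x^{s-1}e^{-bx}}{1-e^{i\varphi-x}}\,dx$. I would then move the contour, or use the Mellin representation, so that expanding $1/(1-e^{i\varphi-x})$ around its poles at $x=i(\varphi+2\pi n)$ produces a remainder of the form $\int x^{N-1}e^{-\ell c x}\cdots$. This is bounded by $(N-1)!/(2\pi\ell c_2)^N$ times a constant $c_1$, where $c_2$ depends on $\min_j\operatorname{dist}(\varphi_j,2\pi\BZ)$ and on $\min_\alpha\langle\alpha^\vee,\lambda\rangle$. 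Uniformity in $N$ and $\ell$ is the delicate point, as is the finitely many $k$ where the regime split in Step 2 is invalid; these I would absorb into $c_1$.
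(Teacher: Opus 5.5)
Your proposal has a genuine gap at Step 1, which the rest depends on. You assume the Kodaira Laplacian of $g_{X_0}$ acts on each $G$-isotypic component by a difference of Casimir eigenvalues. That is what produces the explicit root-string eigenvalues $k(\langle\alpha^\vee,\rho+\ell\lambda\rangle+k)/\alpha^\vee(X_0)$.

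This holds for Hermitian symmetric spaces, where $g_{X_0}$ is the normal metric; that is the setting of \cite{K2} and of Th.~\ref{AsymptSymmetricSpaces}. On a general flag manifold, however, $g_{X_0}$ weights the root spaces by $\alpha(X_0)$, which is not constant on $\Psi$. For example, on $G/T$ one has $(\alpha+\beta)(X_0)=\alpha(X_0)+\beta(X_0)$. So the metric is not induced by an $\mathrm{Ad}$-invariant form on $\mathfrak g$, the Laplacian is not a Casimir operator, and it need not act by scalars on the multiplicity spaces $\mathrm{Hom}_K(V_\mu,\Lambda^{0,q}\otimes L^\ell)$. No explicit spectrum, hence no usable zeta function, is available.

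The paper never touches the spectrum. It starts from the closed formula \cite[Th. 5.4]{KK}, which consists of:
\begin{itemize}
\item the integral $\int_{M^t}\Td_t(TM)R_t(TM)\ch_t(L^\ell)$;
\item the $\log\alpha^\vee(X_0)$ terms;
\item the finite sums $-\sum_{\alpha\in\Psi}\sum_{k=1}^{\langle\alpha^\vee,\rho+\ell\lambda\rangle}\chi_{\rho+\ell\lambda-k\alpha}(t)\log k$;
\item the constant of \cite[Th. 7.1]{KK}.
\end{itemize}
Each finite sum is written exactly as a difference of values of $\partial_s\Phi$ at $v+a+1$ and at $1$ (Eq.~(\ref{SumToPhi})) and expanded by Th.~\ref{AsymptLerch} via Prop.~\ref{AsympMitZeta}. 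Then $\chi_{\rho+\ell\lambda-k\alpha}$ is converted into $\chi_{\rho+\ell\lambda+k\alpha}$ using the reflection symmetry (\ref{Lie-symmetry}) and Eq.~(\ref{zetaSchluckt1}), exactly as in the proof of Th.~\ref{AsymptSymmetricSpaces}. Your Steps 2 and 3 essentially rediscover Th.~\ref{AsymptLerch} and would fit this scheme once the correct input formula is used. Working with the exact finite sum also removes the need to justify expanding $\log(1+k/a)$ termwise inside a regularised sum over all $k\ge1$.

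Your account of $\tilde C$ is also wrong. Root strings of length at least $3$ occur exactly in the non-simply-laced types $B_n$, $C_n$, $F_4$, $G_2$, while $E_8$ is simply laced. So no case check of that kind can single out $G_2$, $F_4$, $E_8$.

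In the paper $\tilde C$ is not derived at all. It is the constant $C$ of \cite[Th. 7.1]{KK} for positive $L$, carried along as $\tilde C\cdot\chi_{\rho+\ell\lambda}(t)$. Note that $G_2,F_4,E_8$ are precisely the simple types without cominuscule weights, i.e.\ without compact Hermitian symmetric quotients. This suggests the exception comes from the unavailability of the explicitly computable symmetric case, not from any root-string condition.

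On the remainder, the paper does not shift contours. It applies Taylor's theorem with Lagrange remainder to $e^{-au}/(1-ze^{-u})$ and bounds $\Phi(ze^{-\xi},-N,a)$ uniformly in $\xi$ via Lerch's transformation formula (proof of Th.~\ref{AsymptLerch}). Summing over the Weyl-character terms then gives Lemma~\ref{errortermIsol} with $c_2=\min_{q,w,\alpha}|q+w\alpha(X)|\,\langle\alpha^\vee,\lambda\rangle$.
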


This is an asymptotic expansion in the following ways: Firstly, for each positive invariant line bundle on a given homogeneous space there exist $r\in\BN, \vp_j,a_{j,m}\in\BR$ for each $j=1,\dots,r,m\in\BZ$ such that Th.\ \ref{isolFixpkt1} provides an expansion in terms of an asymptotic scale
\[
(a_{1,m}e^{i\ell\vp_1}+\cdots+a_{r,m}e^{i\ell\vp_r})\ell^{m}\log\ell,\ (a_{1,m}e^{i\ell\vp_1}+\cdots+a_{r,m}e^{i\ell\vp_r})\ell^{m}
\qquad(m\in\BZ, m\leq m_0).
\]
Secondly, for the summands with $m>0$ Th.\ \ref{isolFixpkt1} can also be regarded as a generalised asymptotic expansion in terms of the asymptotic scale $\ell^{m}\log\ell,\ell^{m}$, where the coefficients are not constant with respect to $\ell$, but bounded. Such a notion is meaningless for the terms with  $m\leq0$ though. Thirdly,
the crucial interpretation for the applications to Arakelov geometry stems from the fact that, in this case, the finitely many angles $\vp_j$ lie in $\BQ\cdot\pi$. Hence there exists a $q\in\BZ^+$ such that they have the form $2\pi \frac{p_j}{q}$. The isotypical components $e^{2\pi i \ell p/q}$ of the action of $\BZ/q\BZ$ thus have an asymptotic expansion in terms of the asymptotic scale $\ell^{m}\log\ell,\ell^{m}$.

When $G/K$ is an Hermitian symmetric space an asymptotic expansion is given for any $t\in G$ in Section \ref{SectionAsymptSymm}, where the characters now are linear combination of terms of the form $k^{m}\ell^{m'}e^{i(\vp'\ell+\vp k)}$ with $\vp,\vp'\in\BR$, $m,m'\in\BN_0$. In the following, $\log^\ddagger(x+1):=-\sum_{k=1}^{\infty}\frac{(-x)^k}k$ is considered as a formal power series. 
\begin{theor}\label{AsymptSymmetricSpaces1}
Consider the same situation as in Th.\ \ref{isolFixpkt1} and additionally assume that G/K is symmetric.
Then the asymptotic behaviour of the equivariant analytic torsion for high powers of $\mtr L$ with respect to an action by any $t\in G$ is given by 
\begin{align*}
T_t(G/K,\mtr L^\ell_{\rho_K+\ell\lambda})&=
-\sum_{\a\in\Psi}\log\frac{\langle\a^\vee,\ell\lambda\rangle}{\a^\vee(X_0)}\cdot\bz\chi_{\rho+\ell\lambda+k\alpha}(t)
+\sum_{\a\in\Psi} \bzs
\chi_{\rho+\ell\lambda+k\a}(t)
\\&
-\sum_{\a\in\Psi}\bz\left(\chi_{\rho+\ell\lambda+k\alpha}(t)
\cdot\log^\ddagger\left(\frac{\langle\a^\vee,\rho\rangle+k}{\langle\a^\vee,\ell\lambda\rangle}+1\right)\right)^{[\deg_\ell>-\M]}
+R_2
\end{align*}
with $|R_2|<\frac{(\dim(G/K)^t+\M-1)!}{(2\pi\ell c_2)^\M}c_1$ for explicitly given $c_1,c_2\in\BR^+$ which are independent of $\M$ and $\ell$.
\end{theor}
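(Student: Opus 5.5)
The proof starts from the author's explicit formula for equivariant torsion on homogeneous spaces from \cite{K1}, which we may use as known. For symmetric $G/K$ the spectrum of the Kodaira Laplacian on $\mtr L^\ell$ is given by Casimir eigenvalues. The zeta function then becomes a sum over $\alpha\in\Psi$ and $k\ge 1$ of terms
\[
\chi_{\rho+\ell\lambda+k\alpha}(t)\cdot\log\bigl(\alpha^\vee(X_0)^{-1}\langle\alpha^\vee,\rho+\ell\lambda+k\alpha\rangle\bigr),
\]
regularised via Lerch-type zeta values. These are exactly the terms encoded by $\bz$ and $\bzs$ in Definition \ref{zetama}. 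I would first bring $T_t$ into the form $\frac{\partial}{\partial s}\big|_{s=0}$ of
\[
\sum_\alpha\sum_{k}\chi_{\rho+\ell\lambda+k\alpha}(t)\,\bigl(\langle\alpha^\vee,\ell\lambda\rangle+\langle\alpha^\vee,\rho\rangle+k\bigr)^{-s}\alpha^\vee(X_0)^{s}.
\]
This uses that for symmetric spaces only roots with $\langle\alpha^\vee,\alpha\rangle=2$ enter and the eigenvalue is linear in $k$ after the shift. The point of the symmetric assumption is that the characters $\chi_{\rho+\ell\lambda+k\alpha}(t)$, via the Weyl character formula (or its degenerate limit at non-regular $t$, obtained by differentiating), are finite sums $\sum_j c_j(\ell)k^{n_j}e^{ik\vp_j}$. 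Here the $c_j$ are polynomials in $\ell$ times $e^{i\ell\vp'_j}$, so Definition \ref{zetama} applies coefficientwise.

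Next, factor
\[
\langle\alpha^\vee,\ell\lambda\rangle+\langle\alpha^\vee,\rho\rangle+k=\langle\alpha^\vee,\ell\lambda\rangle\Bigl(1+\tfrac{\langle\alpha^\vee,\rho\rangle+k}{\langle\alpha^\vee,\ell\lambda\rangle}\Bigr).
\]
Differentiating in $s$ splits the torsion into three pieces. The factor $\log(\langle\alpha^\vee,\ell\lambda\rangle/\alpha^\vee(X_0))$ multiplies the regularised sum $\bz\chi$. The $\bzs\chi$ term comes from the shifted Hurwitz-Lerch derivative. The remainder is $-\bz$ applied to $\chi\cdot\log(1+x)$ with $x=(\langle\alpha^\vee,\rho\rangle+k)/\langle\alpha^\vee,\ell\lambda\rangle$. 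For this remainder I would expand $\log$ as the formal series $\log^\ddagger$ and apply $\bz$ termwise. Each term is $k^{n}e^{ik\vp}$ times a polynomial in $k$, whose regularised sum is a Lerch value $\Phi(e^{i\vp},-n',1)$. This gives the truncated expression $(\cdot)^{[\deg_\ell>-\M]}$, where I would justify termwise regularisation via the Mellin representation of the Lerch zeta function. The truncation keeps powers of $\ell$ above $-\M$, noting that $c_j(\ell)$ carries up to $\dim(G/K)^t$ extra powers of $\ell$ and $k$.

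The main obstacle is the remainder bound for $R_2$. I would write the truncated tail of $\log(1+x)$ with an integral remainder, and bound the regularised $k$-sum through a contour or Mellin–Barnes representation. The idea is to shift the contour past the poles to the line $\Re s=-\M$ and estimate the Lerch function using $|\Phi(e^{i\vp},-m,a)|\lesssim m!/(2\pi c)^{m}$, with $c$ measuring the distance of $\vp$ from $2\pi\BZ$ in the regime where it appears, or using the Hurwitz case otherwise. The polynomial degree of the character in $k$ raises the effective order from $\M$ to $\dim(G/K)^t+\M$. This yields
\[
|R_2|<\frac{(\dim(G/K)^t+\M-1)!}{(2\pi\ell c_2)^\M}\,c_1,
\]
with $c_2$ the minimum of $\langle\alpha^\vee,\lambda\rangle$ and the relevant angular gaps, and $c_1$ collecting $|\Psi|$ and the character coefficient sizes. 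Controlling the uniformity in $\vp$ near $2\pi\BZ$, where the Lerch function degenerates to Hurwitz zeta, is the delicate point. There I would treat the exactly-zero angles separately, as polynomial Hurwitz sums, and use that for fixed $t$ only finitely many angles occur.
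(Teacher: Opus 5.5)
Your proposal has a genuine gap, and it is in the main terms rather than in the remainder.

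\textbf{The starting zeta function is wrong.} You reduce $T_t$ to $\frac{\partial}{\partial s}\big|_{s=0}$ of $\sum_\alpha\sum_k\chi_{\rho+\ell\lambda+k\alpha}(t)\,(\langle\alpha^\vee,\rho+\ell\lambda\rangle+k)^{-s}\alpha^\vee(X_0)^{s}$. This is not correct. The relevant Casimir eigenvalues are quadratic in $k$, essentially proportional to $k\,(\langle\alpha^\vee,\rho+\ell\lambda\rangle+k)/\alpha^\vee(X_0)$. The identity $\langle\alpha^\vee,\alpha\rangle=2$ holds for every root and does not make them linear.

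With your single factor, differentiation produces (apart from the $\log\alpha^\vee(X_0)$ term) only Lerch derivatives with shifted third argument, $\frac{\partial\Phi}{\partial s}(e^{i\vp},-n,v+a+1)$. Their expansion in Th.~\ref{AsymptLerch} contains no value $\frac{\partial\Phi}{\partial s}(e^{i\vp},-n,1)$. So the term $\sum_\alpha\bzs\chi_{\rho+\ell\lambda+k\alpha}(t)$, informally $-\sum_k\chi_{\rho+\ell\lambda+k\alpha}(t)\log k$, has no source in your setup. In the paper it comes from two pieces of \cite[Th.~5.2]{KK}: the lower endpoint in (\ref{SumToPhi}), applied to the finite sum $\sum_{k=1}^{\langle\alpha^\vee,\rho+\ell\lambda\rangle}\chi_{\rho+\ell\lambda-k\alpha}(t)\log k$, combined with the term $-2\bzs\chi^{\rm odd}_{\rho+\ell\lambda-k\alpha}(t)$.

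Incidentally, the finite form of the characters holds on every homogeneous space. The symmetric hypothesis is what makes the explicit formula \cite[Th.~5.2]{KK} available for arbitrary $t$.

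\textbf{The two logarithms cannot be regularised separately.} Even with the correct factor $k^{-s}(\langle\alpha^\vee,\rho+\ell\lambda\rangle+k)^{-s}$, you cannot regularise the two logarithms separately and add. Two things go wrong for the components with angle $\vp\equiv0$ (all of $\chi$ when $t=e_G$):
\begin{itemize}
\item There is a correction, recorded in \cite[Th.~5.2]{KK} as $-2\sum_\alpha\chi^*_{\rho+\ell\lambda-k\alpha}(t)(\langle\alpha^\vee,\rho+\ell\lambda\rangle)$.
\item The Hurwitz case of Th.~\ref{AsymptLerch} produces extra main terms $v^{n+1}\big(\frac{\log v}{n+1}-\frac1{(n+1)^2}\big)$. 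These lead to the $\tilde P^*$ and $\mbox{Res}\,P$ terms of Prop.~\ref{AsympMitZeta}.
\end{itemize}
None of these survive in the statement, yet they are large. For $\BP^1\BC$ and $t=e_G$ one has $P(k)=\chi_{\rho+\ell\lambda-k\alpha}(e_G)=\ell+1-2k$ and $P^*(\ell+1)=\tilde P^*(\ell+1)=(\ell+1)^2/4$. So each of these contributions is of order $\ell^2$, larger than the leading term $\ell\log\ell$.

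\textbf{The missing idea} is the reflection symmetry (\ref{Lie-symmetry}). It makes $k\mapsto\chi_{\rho+\ell\lambda-k\alpha}(t)$ antisymmetric under $k\mapsto\langle\alpha^\vee,\rho+\ell\lambda\rangle-k$. Prop.~\ref{PHatStern} then gives $\mbox{Res}\,P=0$ and $\tilde P^*=P^*$, so the correction term cancels exactly. The same symmetry turns $\bz\chi_{\rho+\ell\lambda-(k+\langle\alpha^\vee,\rho+\ell\lambda\rangle)\alpha}$ into $-\bz\chi_{\rho+\ell\lambda+k\alpha}$. Finally, (\ref{bzSymmetry}) merges the two $\log\alpha^\vee(X_0)$ terms.

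Handling the zero angles only inside the remainder estimate, as you propose, overlooks that they generate leading-order terms. Your plan for $R_2$ itself (Mellin representation with a Taylor remainder, degree bound via $\dim(G/K)^t$) is broadly in line with Th.~\ref{AsymptLerch}, Prop.~\ref{estimate2} and Lemma~\ref{estimate3}.
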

As detailed in \cite[Cor. 7.3]{KK} by Kaiser and the author, one term in the formula for the torsion corresponds to one side of the Jantzen sum formula (\cite[p.\ 311]{Jan}), classifying lattice representations of Chevalley schemes. Using this relation Theorems \ref{isolFixpkt1} and \ref{AsymptSymmetricSpaces1} directly yield asymptotic formulae (Th.\ \ref{Jantzen}) for the arithmetic character given by the Jantzen sum formula.

A fourth result concerns the Lie-algebra-equivariant torsion with respect to the action of a vector field introduced by Bismut and Goette (\cite{BG}). Theorem \ref{AsymptP1Lie} provides the full asymptotic expansion of this torsion on $\BP^1\BC$.

A fifth main Theorem \ref{asympTpi} gives the full asymptotic expansion of torsion forms associated to fibrations by rational curves.

Theorem \ref{BVFinski} shows that in the non-equivariant case the four leading terms are equal to the values given by Bismut--Vasserot and Finski. Lemma \ref{VglPuchol} verifies that in the case of a fibration by rational curves, the top terms in the expansion of the torsion form equal the formula presented by Puchol.

Curiously, the comparison with a multiple of $\frac{7}{24}$ in Finski's result requires several unusual results about Lie algebras in Prop.\ \ref{LinWeyldim}-\ref{VglSymm2}, such as the quotient of the Killing forms of $G$ and $K$ being given by $\frac{3\dim K-\dim G}{2(\dim K-1)}$ for a symmetric space $G/K$. Similarly suprising is the fact that the deduction of the asymptotic expansion for the torsion form needs a closed formula for the sum $\sum_{r=0}^j\left({j\atop r}\right)\frac{a^r}{j+m-r}$ (Prop.\ \ref{nichtHa}) that has not been published before. The computation of the asymptotic expansion relies on formulae for the analytic torsions that were given by the author in terms of Lie algebras and the Lerch zeta function in \cite{K2} and \cite{KSgenus}. This reduces large parts of the proofs to results about such objects. Several of the intermediate lemmata are generalisations to the equivariant case of results in \cite[Section 8]{KK}, where they were needed to compute arithmetic heights.

\section{Asymptotic expansion of a derivative of the Lerch zeta function}
The Lerch zeta function is defined by 
$$
\Phi(z,s,v)=\sum_{k=0}^\infty\frac{z^k}{(k+v)^s}
$$
for $z\in\BC$, $|z|<1$, $v\in\BC\setminus\BZ_0^-$ (\cite[1.11(1)]{Erd1}) and $s\in\BC$, ${\rm Re\,}s>1$. As in \cite[p.\ 161]{Ap}, we consider the extension of this function to $|z|\leq1$ and for fixed $z$ its analytic continuation to $s\in\BC$ (for $z\neq1$) and $s\in\BC\setminus\{1\}$ (for $z=1$). Notice that for fixed $\Re s<0$ and any fixed $v$, $\Phi$ is not continuous at $z=1$, as $\lim_{z\to 1\atop z\neq1}|\Phi(z,s,v)|=\infty$.
By definition one obtains for $\Re v>0$\begin{equation}\label{Apostol0}
z\Phi(z,-s,v+1)=\Phi(z,-s,v)-v^s.
\end{equation}
(see \cite[Eq.\ (3.3)]{Ap}). One thus finds
\begin{eqnarray}\label{FundShift}
\Phi(z,s,v+\ell)&=&z^{-\ell}\left(\Phi(z,s,v)-\sum_{k=0}^{\ell-1}\frac{z^k}{(v+k)^s}\right).
\end{eqnarray}

Let $\zeta(s)$ denote the Riemann zeta function and let $\zeta(s,a)=\sum_{m=0}^\infty(m+a)^{-s}$ (for ${\rm Re\, }s>>0$) denote the Hurwitz zeta function. We define the Harmonic Numbers by ${\cal H}_m:=\sum_{j=1}^m\frac1m$, the Bernoulli polynomials via $\frac{te^{xt}}{e^t-1}=:\sum_{m=0}^\infty \frac{t^m}{m!}B_m(x)$ and the Bernoulli numbers as $B_m:=B_m(0)$.

We shall need the following Theorem about asymptotic expansions of a derivative of the Lerch zeta function, as it yields the asymptotic expansion of sums of the form
\begin{eqnarray}\label{SumToPhi}
\sum_{k=1}^{v}k^me^{ik\vp}\log k=
e^{i(v+1)\vp}\frac{\partial\Phi}{\partial s}(e^{i\vp},-m,v+1)
-e^{i\vp}\frac{\partial\Phi}{\partial s}(e^{i\vp},-m,1)
\end{eqnarray}
which appear in the formula for the torsion on symmetric spaces.
Such an asymptotic expansion has been given by Katsurada (\cite[Th. 16]{Kat}). In contrast to his result, we specialise to $s\in\BZ_0^-$ to get more explicit expressions for the coefficients and the error term. For $e^{i\vp}=1$ such a result has been obtained by Elizalde (\cite{Eli}), and we derive an explicit error bound $R$ in the proof.
\begin{theor}\label{AsymptLerch}
Consider $\vp\in\BR$, $\M\in\BZ^+$ and $a\in\BN_0$.
If $e^{i\vp}\neq1$, $s=-\ms\in\BZ_0^-$, $\frac{\partial\Phi}{\partial s}$ has the asymptotic expansion for $v\to\infty$
\begin{eqnarray*}
\frac{\partial\Phi}{\partial s}(e^{i\vp},-\ms,v+\la)&=&
\sum_{m=0}^{\ms}v^{m}\left({\ms\atop m}\right)\Phi(e^{i\vp},m-\ms,\la)\cdot\left(-\log v+{\cal H}_{m}-{\cal H}_{\ms}\right)
\\&&+\sum_{m=1}^{\M-1}\frac{\Phi(e^{i\vp},-m-\ms,\la)}{(-v)^{m}m\left({m+\ms\atop \ms}\right)}
+\frac{R(e^{i\vp},-\M-\ms,\la,\ms,v)}{(-v)^{N}N\left({N+\ms\atop \ms}\right)}.
\end{eqnarray*}
In the case  $e^{i\vp}=1$ one obtains
\begin{eqnarray*}
\frac{\partial\Phi}{\partial s}(1,-\ms,v+\la)&=&v^{\ms+1}\left(\frac{\log v}{\ms+1}-\frac{1}{(\ms+1)^2}\right)
\\&&+\sum_{m=0}^{\ms}v^{m}\left({\ms\atop m}\right)\Phi(1,m-\ms,\la)\cdot\left(-\log v+{\cal H}_{m}-{\cal H}_{\ms}\right)
\\&&+\sum_{m=1}^{\M-1}\frac{\Phi(1,-m-\ms,\la)}{(-v)^{m}m\left({m+\ms\atop \ms}\right)}
+\frac{R(1,-\M-\ms,\la,\ms,v)}{(-v)^{N}N\left({N+\ms\atop \ms}\right)}.
\end{eqnarray*}
In both cases the coefficient of $\log v$ on the right hand side is equal to $-\Phi(e^{i\vp},-\ms,v+\la)\log v$. There is an explicit bound
\begin{eqnarray*}
|R(e^{i\vp},-\M,\la,\ms,v)|&<&\frac{N!}{(2\pi)^{N+1}}\left(\zeta(N+1,\frac{\vp}{2\pi})+\zeta(N+1,1-\frac{\vp}{2\pi})\right)
+\zeta(-N)-\zeta(-N,\la)
\\&=:&C(e^{i\vp},-\M,\la).
\end{eqnarray*}
\end{theor}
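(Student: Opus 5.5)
We would derive the expansion from an integral representation of the Lerch zeta function, expanding the shift $v$ binomially. The plan is to use Hermite/Lipschitz-type integral or Fourier-series representations. For $\Re s<0$ one has
\[
\Phi(z,s,a+v)=\sum_k z^k(k+a+v)^{-s},
\]
defined by analytic continuation. A cleaner route starts from the Mellin representation for $\Re s>0$,
\[
\Gamma(s)\Phi(z,s,w)=\int_0^\infty \frac{x^{s-1}e^{-wx}}{1-ze^{-x}}\,dx ,
\]
with $w=v+a$. We then differentiate in $s$ and continue to $s=-n$ via a Hankel contour.

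Step 1 is to split $\frac{e^{-ax}}{1-ze^{-x}}$ into its Taylor expansion at $x=0$. By the generating function of $\Phi(z,-m,a)$, i.e. the Apostol--Bernoulli polynomials,
\[
\frac{e^{-ax}}{1-ze^{-x}}=\sum_{m\ge0}\Phi(z,-m,a)\frac{(-x)^m}{m!}
\]
for $z\ne1$. For $z=1$ there is an extra pole term $1/x$. Step 2 truncates this series at order $n+N$ and inserts the polynomial part into $\int_0^\infty x^{s-1+m}e^{-vx}\,dx=\Gamma(s+m)v^{-s-m}$. This gives $\sum_m \Phi(z,-m,a)(-1)^m\frac{\Gamma(s+m)}{m!\,\Gamma(s)}v^{-s-m}$.

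Step 3 differentiates in $s$ and sets $s=-n$. For $m\le n$ the factor $\Gamma(s+m)/\Gamma(s)$ is a polynomial in $s$ that is nonzero at $s=-n$. Its logarithmic derivative gives the harmonic number difference $\mathcal H_{n-m}-\mathcal H_n$; with the reindexing $m\mapsto n-m$ this becomes $\mathcal H_m-\mathcal H_n$. The factor $v^{-s-m}$ produces $-\log v$, and reindexing yields the binomial coefficients and the terms $v^m\binom nm\Phi(z,m-n,a)$. For $m>n$, $1/\Gamma(s)$ vanishes at $s=-n$, so only the derivative of $1/\Gamma$ survives, with residue factor $(-1)^n n!$. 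This yields $\Gamma(m-n)(-1)^n n!/m!$ times $v^{n-m}$. After substituting $m=n+m'$, this equals $\frac{1}{m'\binom{m'+n}{n}}(-v)^{-m'}\Phi(z,-m'-n,a)$, which matches the stated sum. For $z=1$, the extra $1/x$ term gives $\Gamma(s-1)v^{1-s}$. Its $s$-derivative at $-n$ is exactly $v^{n+1}\bigl(\frac{\log v}{n+1}-\frac{1}{(n+1)^2}\bigr)$. The statement about the $\log v$ coefficient follows from the binomial identity
\[
\Phi(z,-n,v+a)\sim\sum_m\binom nm v^m\Phi(z,m-n,a),
\]
which is exact, as seen from the same computation without the derivative.

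Step 4, the remainder estimate, is the main obstacle. The remainder comes from the Taylor tail $r_N(x)$ of $\frac{e^{-ax}}{1-ze^{-x}}$ of order $n+N$. I would bound it using the partial-fraction/Fourier expansion
\[
\frac{1}{1-e^{i\varphi-x}}=\sum_{k\in\mathbb Z}\frac{1}{x-i(\varphi+2\pi k)}+\tfrac12 .
\]
This identifies the coefficients $\Phi(z,-m,a)$ with Hurwitz-zeta sums $\sum_k(\varphi+2\pi k)^{-m-1}$. Those sums give precisely the $\frac{N!}{(2\pi)^{N+1}}\bigl(\zeta(N+1,\frac{\varphi}{2\pi})+\zeta(N+1,1-\frac{\varphi}{2\pi})\bigr)$ contribution. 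The $e^{-ax}$ factor, expanded as a finite sum $\sum_{j<a}$ via \eqref{FundShift}, contributes $\zeta(-N)-\zeta(-N,a)=\sum_{j=1}^{a-1}j^N$-type terms. I would first prove the bound for $a=0$ (or $a=1$) with the Lipschitz formula, estimating the tail integral $\int_0^\infty x^{N+n-1}e^{-vx}|r|\,dx$ with the prefactor $(-v)^{-N}/(N\binom{N+n}{n})$ pulled out. Then I would transfer to general $a$ using \eqref{FundShift}, accepting that the constants may need adjustment.
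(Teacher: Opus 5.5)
Your proposal is correct and follows essentially the same route as the paper: the Mellin representation with Apostol's Taylor polynomial of $e^{-au}/(1-ze^{-u})$ subtracted to order $n+N$, differentiation at $s=-n$ via $(1/\Gamma)'(-n)=(-1)^n n!$, the extra $1/u$ term for $z=1$, and a Lagrange-remainder bound in which your partial-fraction expansion plays the role of the paper's Lerch transformation formula, via $|\xi-i(\varphi+2\pi k)|\ge|\varphi+2\pi k|$ for real $\xi\ge0$. The transfer to general $a$ via (\ref{FundShift}) gives exactly $C(z,-N-n,a)$ with no adjustment of constants; two small slips are that for $z=1$ your ``exact'' identity for $\Phi(1,-n,v+a)$ needs the extra term $-v^{n+1}/(n+1)$ from the $1/u$ pole, and that after dividing by $u^{n+1}$ the tail integral is $\int_0^\infty u^{N-1}e^{-vu}\,du$ rather than one with $u^{N+n-1}$.
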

Note that there is no $v^{\ms}$-term in this expansion, although there is a $v^{\ms}\log v$-term. The case $e^{i\vp}=1$, $a\in\{0,1\}$ can be simplified as $\Phi(1,-m,0)=\Phi(1,-m,1)=\zeta(-m)$ vanishes for $m$ even. In the special case $\frac{\partial \Phi}{\partial s}(1,0,v)=\log\Gamma(v)-\log\sqrt{2\pi}$ (\cite[eq. 1.10(10)]{Erd1}) Th.\ \ref{AsymptLerch} corresponds to Stirling's approximation, and in general we proceed similarly to \cite[eq. 1.18(9)]{Erd1}.

The term $\zeta(-N)-\zeta(-N,\la)$ is a Faulhaber polynomial $\frac{B_{N+1}(\la)-B_{N+1}}{N+1}=\sum_{m=1}^{a-1}m^N$ for $a\geq1$, and vanishing for $a\in\{0,1\}$. For $\vp\in]0,2\pi[$ one finds
$$\left|\zeta(N+1,\frac{\vp}{2\pi})-(\frac\vp{2\pi})^{-N-1}-(1+\frac\vp{2\pi})^{-N-1}\right|<\int_1^\infty\frac{dk}{(k+\frac\vp{2\pi})^{N+1}}=\frac{1}{N}(1+\frac\vp{2\pi})^{-N}$$by replacing the sum with an integral. In the case $\M$ odd Lerch's transformation formula shows that the first summand in the bound equals $i^{\M+1}e^{i\vp}\Phi(e^{i\vp},-\M,1)$. For $e^{i\vp}\neq1$ this term equals $-\frac12\frac{\partial^\M}{\partial \vp^\M}\cot\frac\vp2$ (\cite[Eqs.\ 1.16(4), 1.16(9)]{Erd1}).
In the case $N=4\tilde N+1$ and $e^{i\vp}=1$, this causes the first summand in the bound to cancel with the second summand $\zeta(-N)$. In the case $N=4\tilde N+3$ and $e^{i\vp}=1$, these terms are equal. 

By \cite[eq. 1.11(15)]{Erd1}, $\Phi(e^{i\vp},-m,0)=\frac{m!}{(-i\vp)^{m+1}}+O(1)$ for $\vp\to0^+$.

\begin{proof}
First notice that
\begin{eqnarray}\label{AblBinomial}
\nonumber
\frac\partial{\partial s}_{|s=-\ms}\left({-s\atop q}\right)&=&\frac{-1}{q!}\sum_{j=0}^{q-1}\prod_{r=0\atop r\neq j}^{q-1}(\ms-r)
\\&=&\left\{
\begin{array}{cl}
  \frac{\ms!(q-1-\ms)!(-1)^{q-\ms}}{q!}=\frac{(-1)^{q-\ms}}{(q-\ms)\left({q\atop \ms}\right)}  &  \mbox{ if }\ms\in\{0,\dots,q-1\}, \\
  \left({\ms\atop q}\right)\sum_{j=0}^{q-1}\frac{1}{j-\ms}   &   \mbox{ else.}
\end{array}
\right.
\end{eqnarray}

For $z\neq1$ and any $\la\in\BN_0$, the value $\Phi(z,-m,\la)$ is a rational function in $z$ given recursively by
\begin{equation}\label{IterationPhi}
\Phi(z,0,\la)=\frac1{1-z},\quad \Phi(z,-m-1,\la)=z^{1-\la}\frac\partial{\partial z}(z^\la\Phi(z,-m,\la)).
\end{equation}
In particular for $m\in\BZ^+$ one finds
\begin{equation}\label{MultipleDiffLerchPhi}
\frac{\partial^m}{\partial u^m}\frac{e^{-\la u}}{1-ze^{-u}}
=z^{-\la}\frac{\partial^m}{\partial u^m}e^{-\la u}z^\la\Phi(ze^{-u},0,\la)
=(-1)^m e^{-\la u}\Phi(ze^{-u},-m,\la).
\end{equation}
For $N\in\BZ^+$, $|z|=1$, $z=e^{i\vp}\neq1$, $u\in[0,\infty]$, Taylor's Theorem with the Lagrange form of the remainder shows the existence of $\xi\in[0,u]$ such that
\begin{eqnarray}\label{weirdTaylor2}
\nonumber
\lefteqn{
\frac{e^{-\la u}}{1-ze^{-u}}-\sum_{m=0}^{\M-1}\frac{(-u)^m\Phi(z,-m,\la)}{m!}
}
\\&=&\frac{u^N}{N!}\frac{\partial^N}{\partial u^N}_{|u=\xi}\frac{e^{-\la u}}{1-ze^{-u}}
=\frac{u^N}{N!}(-1)^Ne^{-\la\xi}\Phi(ze^{-\xi},-N,\la).
\end{eqnarray}
This is Apostol's Taylor expansion in $u$ (\cite[Eq.\ (3.1) and the eq. before]{Ap}). Its radius of convergence is given by $\min\{|u|\,|\,1-ze^{-u}=0\}=\min|\vp+2\pi \BZ|$. Lerch's Transformation formula in its form in \cite[Eq.\ (9)]{Ob}
\begin{eqnarray}
\Phi(e^{i\vp-\xi},-\tilde m,\tilde a)=\Gamma(1+\tilde m)e^{\tilde a(\xi-i\vp)}\sum_{k\in\BZ}\frac{e^{2\pi ik\tilde a}}{(2\pi ik+\xi-i\vp)^{\tilde m+1}}
\end{eqnarray}
($\tilde a\in]0,1]$, $\vp\in[0,2\pi[$, $\Re \tilde m>0$)
shows for $z=e^{i\vp}\neq1$, $\xi>0$, $N\in\BZ^+$ that
\begin{eqnarray}
\nonumber
\lefteqn{
\left|(-1)^Ne^{-\la\xi}\Phi(ze^{-\xi},-N,\la)\right|
\stackrel{(\ref{FundShift})}=\left|e^{i\vp-\xi}\Phi(ze^{-\xi},-N,1)-\sum_{m=0}^{\la-1}e^{m(i\vp-\xi)}m^N\right|
}
\\
\nonumber
&\leq&\frac{N!}{(2\pi)^{N+1}}\left|e^{-i\la\vp}\sum_{k=-\infty}^{+\infty}(k+\frac{\xi-i\vp}{2\pi i})^{-N-1}
\right|+\sum_{m=0}^{\la-1}m^N
\\&\leq& \frac{N!}{(2\pi)^{N+1}}\sum_{k=-\infty}^{+\infty}|k-\frac{\vp}{2\pi}|^{-N-1}
+\zeta(-N)-\zeta(-N,\la)
\nonumber
\\&=&\frac{N!}{(2\pi)^{N+1}}\left(\zeta(N+1,\frac{\vp}{2\pi})+\zeta(N+1,1-\frac{\vp}{2\pi})\right)
+\zeta(-N)-\zeta(-N,\la)
\nonumber
\\&=&C(z,-\M,\la).
\end{eqnarray}
Hence
\begin{eqnarray}\label{weirdTaylor}
\left|\frac{e^{-\la u}}{1-ze^{-u}}-\sum_{m=0}^{\M-1}\frac{(-u)^m\Phi(z,-m,\la)}{m!}\right|u^{-\M}
<\frac{C(z,-\M,\la)}{\M!}.
\end{eqnarray}

The Lerch zeta function verifies for $\Re v>0$ and either $|z|\leq1$, $z\neq1$, $\Re s>0$ or $z=1$, $\Re s>1$ the relation
$
\Phi(z,s,v)=\frac1{\Gamma(s)}\int_0^\infty\frac{u^{s-1}}{1-ze^{-u}}\frac{du}{e^{vu}}
$ (\cite[eq. 11.1(3)]{Erd1}). 
One obtains for $z\neq1$, $\Re s>0$, $\Re v>0$
\begin{eqnarray}\nonumber\label{PhiIntegral1}
\Phi(z,s,v+\la)&=&\frac1{\Gamma(s)}\int_0^\infty\left(\frac{e^{-\la u}}{1-ze^{-u}}-\sum_{m=0}^{\M-1}\frac{(-u)^m\Phi(z,-m,\la)}{m!}\right)u^{s-1}\frac{du}{e^{vu}}
\\&&+\sum_{m=0}^{\M-1}\frac{(-1)^m\Phi(z,-m,\la)}{m!}\frac{v^{-m-s}\Gamma(m+s)}{\Gamma(s)}
\nonumber
\\&=&\frac1{\Gamma(s)}\int_0^\infty\left(\frac{e^{-\la u}}{1-ze^{-u}}-\sum_{m=0}^{\M-1}\frac{(-u)^m\Phi(z,-m,\la)}{m!}\right)u^{s-1}\frac{du}{e^{vu}}
\nonumber
\\&&+\sum_{m=0}^{\M-1}\Phi(z,-m,\la)v^{-m-s}\left({-s\atop m}\right).
\end{eqnarray}

According to Ineq.\ (\ref{weirdTaylor}) the integral in Eq.\ (\ref{PhiIntegral1}) is holomorphic in $s$ for $\Re s>-\M, v>0$. As the last summand in Eq.\ (\ref{PhiIntegral1}) is holomorphic in $s\in\BC$, by analytic continuation Eq.\ (\ref{PhiIntegral1}) holds for $\Re s>-\M, v>0$. Setting $s=-\ms$ in Eq.\ (\ref{PhiIntegral1}) proves the statement about the coefficient of $\log v$ in the theorem, as the factor $\frac1{\Gamma(s)}$ causes the integral term to vanish.

Using $\left(\frac1{\Gamma(s)}\right)'_{|s=-\ms}=(-1)^{\ms}\ms!$, the derivative at $s=-\ms>-\M$ is given by
\begin{eqnarray}\nonumber\label{asymptPhi1}
\frac\partial{\partial s}\Phi(z,-\ms,v+\la)&=&(-1)^{\ms}\ms!
\int_0^\infty\left(\frac{e^{-\la u}}{1-ze^{-u}}-\sum_{m=0}^{\M-1}\frac{(-u)^m\Phi(z,-m,\la)}{m!}\right)\frac{du}{u^{\ms+1}e^{vu}}
\\&&+\sum_{m=0}^{\M-1}\Phi(z,-m,\la)\frac\partial{\partial s}_{|s=-\ms}\left(v^{-m-s}\left({-s\atop m}\right)\right).
\end{eqnarray}
Let $(-v)^{\ms-N}\frac{R(z,-\M,\la,\ms,v)}{(N-\ms)\left({N\atop \ms}\right)}$ denote the first summand on the right hand side. By using inequality (\ref{weirdTaylor}) when integrating over $u$ one finds
$
|R(z,-\M,\la,\ms,v)|<C(z,-\M,\la).
$
For $s=-\ms$, substituting Eq.\ (\ref{AblBinomial}) in the third summand in Eq.\ (\ref{asymptPhi1}) proves the first formula in the theorem. In the case $z=1$ the result follows by using the classical Taylor expansion
\begin{eqnarray}\label{nonweirdTaylor2}
\frac{e^{-\la u}}{1-e^{-u}}=\frac1u+\sum_{m=0}^\infty\frac{(-u)^m\Phi(1,-m,\la)}{m!}.
\end{eqnarray}
Here $\Phi(1,-m,\la)=\zeta(-m,\la)=-\frac{B_{m+1}(\la)}{m+1}$ holds.
Expanding $\frac{e^{-\la u}}{1-e^{-u}}-\frac1u$ as in (\ref{weirdTaylor2}), one finds the Lagrange remainder estimate
\begin{eqnarray}
\nonumber
\lefteqn{
\left|e^{-\la\xi}\Phi(e^{-\xi},-\M,\la)-\frac{\M!}{\xi^{\M+1}}\right|
}
\\&\stackrel{(\ref{FundShift})}=&
\left|e^{-\xi}\Phi(e^{-\xi},-\M,1)-\frac{\M!}{\xi^{\M+1}}-\sum_{m=1}^{\la-1}e^{-m\xi}m^N\right|
\nonumber
\\&=&\left|\frac{\M!}{(2\pi)^{\M+1}}\left(\zeta(\M+1,1+\frac{\xi}{2\pi i})+\zeta(\M+1,1-\frac{\xi}{2\pi i})\right)-\sum_{m=1}^{\la-1}e^{-m\xi}m^N\right|
\\&\leq&\frac{2\M!\zeta(\M+1)}{(2\pi)^{\M+1}}+\zeta(-N)-\zeta(-N,\la)
\end{eqnarray}
and thus a bound $C(1,-N,\la)=\M!\frac{2\zeta(\M+1)}{(2\pi)^{\M+1}}+\zeta(-N)-\zeta(-N,\la)$ as above.
The term $\frac1u$ provides a first summand equal to
\[
\frac\partial{\partial s}_{|s=-\ms}\frac1{\Gamma(s)}\int_0^\infty\frac1uu^{s-1}\frac{du}{e^{vu}}=v^{\ms+1}\left(\frac{\log v}{\ms+1}-\frac{1}{(\ms+1)^2}\right).
\]
In this case $z=1$ the error term $R$ is given by
\begin{align*}
\lefteqn{R(1,-\M,\la,\ms,v)}\\=&
\frac{(-1)^{N}v^{N-\ms}N!}{(N-\ms-1)!}
\int_0^\infty\left(\frac{e^{-\la u}}{1-e^{-u}}-\frac1u-\sum_{m=0}^{\M-1}\frac{(-u)^m\Phi(1,-m,\la)}{m!}\right)\frac{du}{u^{\ms+1}e^{vu}}.
\qedhere
\end{align*}
\end{proof}

\begin{cor}\label{AsympDim=0}
For $e^{i\vp}\neq1$, $\M>0$, $\frac{\partial\Phi}{\partial s}$ has the asymptotic expansion for $v\to\infty$
\begin{eqnarray*}
\frac{\partial\Phi}{\partial s}(e^{i\vp},0,v+\la)&=&\frac{-\log v}{1-e^{i\vp}}
+\sum_{m=1}^{\M-1}\frac{\Phi(e^{i\vp},-m,\la)}{(-v)^{m}m}
+\frac{R(e^{i\vp},-\M,\la,\ms,v)}{(-v)^{\M}\M}.
\end{eqnarray*}
For $e^{i\vp}=1$, one obtains
\begin{eqnarray*}
\frac{\partial\Phi}{\partial s}(1,0,v+\la)&=&v \log v-v+(\la-\frac{1}{2})\log v
+\sum_{m=1}^{\M-1}\frac{\Phi(1,-m,\la)}{(-v)^{m}m}
+\frac{R(1,-\M,\la,\ms,v)!}{(-v)^{\M}\M}.
\end{eqnarray*}
\end{cor}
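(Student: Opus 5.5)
This is the specialisation $\ms=0$ of Th.\ \ref{AsymptLerch}, so the plan is to substitute $\ms=0$ into both formulas of that theorem and simplify.

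\emph{Case $e^{i\vp}\neq1$.} With $\ms=0$ the sum $\sum_{m=0}^{\ms}$ has only the term $m=0$. Its coefficient is $\left({0\atop 0}\right)\Phi(e^{i\vp},0,\la)\cdot(-\log v+{\cal H}_0-{\cal H}_0)$. By the recursion (\ref{IterationPhi}), $\Phi(e^{i\vp},0,\la)=\frac1{1-e^{i\vp}}$ for every $\la\in\BN_0$. Hence this term is $\frac{-\log v}{1-e^{i\vp}}$. In the remaining sum and in the error term the binomial coefficients $\left({m\atop 0}\right)$ and $\left({\M\atop 0}\right)$ equal $1$. This gives the stated formula directly, with the same remainder $R$ and bound $C(e^{i\vp},-\M,\la)$.

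\emph{Case $e^{i\vp}=1$.} The first term $v^{\ms+1}\bigl(\frac{\log v}{\ms+1}-\frac1{(\ms+1)^2}\bigr)$ becomes $v\log v-v$. The $m=0$ summand becomes $\Phi(1,0,\la)\cdot(-\log v)=-\zeta(0,\la)\log v$. Using $\zeta(0,\la)=-B_1(\la)=\frac12-\la$, which is the identity $\Phi(1,-m,\la)=-\frac{B_{m+1}(\la)}{m+1}$ recorded in the proof of Th.\ \ref{AsymptLerch}, this equals $(\la-\frac12)\log v$. The sum over $m\ge1$ and the remainder again lose their binomial factors.

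The only real point to check is the value of $\Phi(1,0,\la)$ for $\la=0$, since $v=0$ is excluded from the original definition. Here I would use the convention already adopted in the theorem, $\Phi(1,-m,0)=\zeta(-m)$ for $m\geq1$, and take $\Phi(1,0,0)=\zeta(0)+1=\frac12$ at $m=0$. This matches the Taylor expansion (\ref{nonweirdTaylor2}), whose constant term for $\la=0$ is $\frac12=-B_1(0)$. So $B_1(\la)$ still gives the correct coefficient $\la-\frac12$.

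As a sanity check, for $\la=1$ we get $\frac{\partial\Phi}{\partial s}(1,0,v+1)=\log\Gamma(v+1)-\log\sqrt{2\pi}$. The formula then reproduces Stirling's expansion $v\log v-v+\frac12\log v+\sum_m\frac{B_{m+1}}{m(m+1)v^m}$, with the signs matching after using $\zeta(-m)=-\frac{B_{m+1}}{m+1}$. The stray "!" on $R$ in the statement is typographical and should be read as $R(1,-\M,\la,0,v)$.
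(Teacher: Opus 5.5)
Your proposal is correct and follows the paper's route: the paper gives no separate proof and treats the corollary as the specialisation $n=0$ of Th.~\ref{AsymptLerch}, using $\Phi(z,0,a)=\frac1{1-z}$ from (\ref{IterationPhi}) and $\Phi(1,0,a)=-B_1(a)=\frac12-a$, exactly as you do. Your handling of the boundary case $a=0$ via $\Phi(1,0,0)=\frac12$, your Stirling check for $a=1$, and your reading of the stray factorial on $R$ as a typo all agree with the paper's own remarks after the theorem.
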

\section{Asymptotic expansion of sums over polynomials times logarithms}
In this section, using Th.\ \ref{AsymptLerch}, we derive formulae for the asymptotic expansion of sums of the form $\sum_{k=1}^vP(k)\log k$, 
where
$P:\BZ\to\BC$ is a function of the form
\begin{equation} P(k)=\sum_{j=0}^r c_jk^{\ms_j} e^{ik\phi_j}\label{sternnn}
\end{equation} with $r\in\BN_0$, $\ms_j\in\BN_0$, $c_j\in\BC$,
$\phi_j\in\BR$ for all $j$. We shall use linear operators $\bz,\bzs,P^*$ etc. as in \cite[p.\ 102]{K2}: For $p\in\BR$ we shall use the notation $P^\odd(k):=(P(k)-P(-k))/2$, furthermore $\bz,\bzs$ as introduced in Def. \ref{zetama} and
\begin{align*}
P^*(p)&:=-\sum_{j=0\atop\phi_j\equiv 0{\rm\ mod\,
}2\pi}^r c_j \frac{p^{\ms_j+1}}{4(\ms_j+1)} \sum_{\ell=1}^{\ms_j}\frac 1 \ell,
\qquad\qquad\tilde P^*(p)&:=\sum_{j=0\atop\phi_j\equiv 0{\rm\ mod }2\pi}^r c_j
\frac{p^{\ms_j+1}}{2(\ms_j+1)^2},\\
\mbox{Res\,}P(p)&:=\sum_{j=0\atop\phi_j\equiv 0{\rm\ mod }2\pi}^r c_j
\frac{p^{\ms_j+1}}{2(\ms_j+1)}.&
\end{align*}
The variable on which these operators act will always be denoted by $k$ in this article.
\begin{Bem}\label{informal}
Informally, $\bz$ and $\bzs$ can be interpreted as $"\bz P=\sum_{k=1}^\infty P(k)"$ and $"\bzs P=-\sum_{k=1}^\infty P(k)\log k"$, where the right hand sides are not well-defined.
\end{Bem}
Notice that
\begin{equation}\label{bzSymmetry}
\bz(P(k)+P(-k))=-P(0)
\end{equation}
by \cite[1.11(17)]{Erd1} or \cite[Eqs.\ (61), (63)]{K2} (or deduce it from Eq.\ (\ref{IterationPhi})). 
Similarly, differentiating Jonqui\`ere's relation \cite[1.11(16)]{Erd1} by $s$ shows for $\vp\in[0,2\pi]$, $n\in\BZ^+$ that
\begin{eqnarray}\label{bzsSymmetry}\nonumber
\bzs (k^ne^{ik\vp}+(-k)^ne^{-ik\vp})&=&e^{i\vp}\frac\partial{\partial s}\Phi(e^{i\vp},-n,1)+(-1)^ne^{-i\vp}\frac\partial{\partial s}\Phi(e^{-i\vp},-n,1)
\\&=&\pi ie^{i\vp}\Phi(e^{i\vp},-n,1)+\frac{\ms!}{(-2\pi i)^n}\Phi(1,n+1,\frac{\vp}{2\pi}).
\end{eqnarray}
Define $R^{\rm rot}(\vp):={\rm Im\,}\big(e^{i\vp}\frac\partial{\partial s}\Phi(e^{i\vp},0,1)\big)$ as in \cite[p.\ 559]{K1}, where this special function is investigated further. Using the Digamma function $\psi$, \cite[Lemma 13]{KReid} shows that for $0<\vp<2\pi$
\begin{equation}\label{Rrot-Formel}
\bzs e^{ik\vp}=e^{i\vp}\frac{\partial\Phi}{\partial s}(e^{i\vp},0,1)=\frac{-\log 2\pi+\Gamma'(1)}2-\frac{\psi(\frac{\vp}{2\pi})+\psi(1-\frac{\vp}{2\pi})}4+iR^{\rm rot}(\vp).
\end{equation}
\begin{defin}
For $a\in\BN_0$ define the linear operator $\bz^R_{a,v,\M}$ on functions of the form
\begin{equation}\label{}
(k,v)\mapsto \sum_{j=0}^rc_jv^{r_j}k^{\ms_j}e^{ik\vp_j+iv\psi_j}
\end{equation}
on $(\BR^+)^2$ with $m,\ms_j\in\BN_0$, $r_j\in\BZ$, $\ms_j+r_j+\M\geq0$, $c_j\in\BC$, $\phi_j,\psi_j\in\BR$ for all $j$, such that
\begin{eqnarray*}
\bz^R_{a,v,N}(v^rk^ne^{ik\vp+iv\psi}):=v^{-N}e^{-i(a+v+1)\vp+iv\psi}\frac{(-1)^{\ms+r+\M}}{\left({\ms+r+\M\atop n}\right)(\M+r)}R(e^{-i\vp},-(n+r+\M),\la+1,n,v).
\end{eqnarray*}
Also denote by $f\mapsto f^{[\deg_v>-\M]}$, $f\mapsto f^{[\deg_v=-\M]}$ the linear map which maps formal power series of the form $e^{i(v+b)\vp}\sum_{k=-\infty}^{c}a_kv^k$ to $e^{i(v+b)\vp}\sum_{k=1-\M}^{c}a_kv^k$, $e^{i(v+b)\vp}a_{-\M} v^{-\M}$ resp. Let $\log^\ddagger(x+1):=-\sum_{k=1}^{\infty}\frac{(-x)^k}k$ be a formal power series.
\end{defin}
This last term will only appear in formulae where only a finite partial sum of $\log^\ddagger$ is used. \begin{prop}\label{estimate2}
Consider $\vp\in]-\pi,\pi]$, $\M\in\BZ^+$ and $a\in\BN_0$. When setting $\vp_0:=\left\{{2\pi\atop\vp}\right.{{\rm if\,}\vp=0,\atop{\rm else}}$ one finds
\begin{eqnarray*}
\frac{C(z,-N,a)}{\M!}\leq\frac{2\zeta(2)}{|\vp_0|^{N+1}}+\frac{a^{\M+1}}{(\M+1)!}
\end{eqnarray*}
and thus
\begin{eqnarray*}
|\bz^R_{a,v,N}(v^rk^ne^{ik\vp+iv\psi})|&\leq& v^{-N}n!(r+\M-1)!\left(\frac{2\zeta(2)}{|\vp_0|^{n+r+N+1}}+\frac{a^{n+r+\M+1}}{(n+r+\M)!}\right)
.
\end{eqnarray*}
\end{prop}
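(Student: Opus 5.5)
We bound the two summands of $C(z,-\M,a)$ from Th.\ \ref{AsymptLerch} separately. We use the representative $\vp\in]-\pi,\pi]$; since $C$ only depends on $e^{i\vp}$ we may write the Hurwitz sum symmetrically as $\sum_{k\in\BZ}|k-\frac{\vp}{2\pi}|^{-\M-1}$.

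\textbf{The $\vp\neq0$ summand.} The $k=0$ term contributes $(2\pi)^{\M+1}|\vp|^{-\M-1}$. For $k\neq0$ we have $|k-\frac{\vp}{2\pi}|\geq|k|-\frac12\geq\frac{|k|}2$. Hence the remaining terms are bounded by
$$
2\sum_{k\geq1}\Big(\frac{2}{k}\Big)^{\M+1}\leq 2^{\M+2}\zeta(2),
$$
using $\zeta(\M+1)\leq\zeta(2)$. Multiplying by $\frac{\M!}{(2\pi)^{\M+1}}$ and dividing by $\M!$ gives
$$
|\vp|^{-\M-1}+\frac{2\cdot2^{\M+1}\zeta(2)}{(2\pi)^{\M+1}}.
$$
Since $|\vp|\leq\pi$, we have $\frac{2^{\M+1}}{(2\pi)^{\M+1}}=\pi^{-\M-1}\leq|\vp|^{-\M-1}$. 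The total is therefore at most $(1+2\zeta(2))|\vp|^{-\M-1}$. This exceeds the claimed constant $2\zeta(2)$, so the split needs tightening: bound the $k\neq0$ terms by $\pi^{-\M-1}\cdot 2\sum_{k\geq1}(2k-1)^{-\M-1}$ and compare with $|\vp_0|^{-\M-1}$. Since $1+2(1-2^{-2})\zeta(2)\cdot$(a factor at most $1$) $\leq 2\zeta(2)\approx3.29$ for $\M\geq1$, this gives the constant $2\zeta(2)$. This bookkeeping of the constant is the only delicate point.

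\textbf{The $\vp=0$ case.} Here $C(1,-\M,a)$ has first summand $\frac{2\M!\zeta(\M+1)}{(2\pi)^{\M+1}}$. After dividing by $\M!$ this is $\leq 2\zeta(2)(2\pi)^{-\M-1}=2\zeta(2)|\vp_0|^{-\M-1}$ directly.

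\textbf{The Faulhaber summand.} We have $\zeta(-\M)-\zeta(-\M,a)=\sum_{m=1}^{a-1}m^\M$. Bounding the sum by the integral,
$$
\sum_{m=1}^{a-1}m^\M\leq\int_0^a x^\M\,dx=\frac{a^{\M+1}}{\M+1}.
$$
Dividing by $\M!$ gives $\frac{a^{\M+1}}{(\M+1)!}$. This proves the first inequality.

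\textbf{The second inequality.} Insert the definition of $\bz^R_{a,v,\M}$. The phase factors have modulus one. The error term satisfies $|R(e^{-i\vp},-(n+r+\M),a+1,n,v)|<C(e^{-i\vp},-(n+r+\M),a+1)$, and the first inequality applies with $\M$ replaced by $n+r+\M$; note $|\vp_0|$ is unchanged under $\vp\mapsto-\vp$. The prefactor is
$$
\frac{1}{\binom{n+r+\M}{n}(\M+r)}=\frac{n!(r+\M-1)!}{(n+r+\M)!}.
$$
Multiplying $(n+r+\M)!$ by the first bound cancels the factorial and yields the stated estimate. The shift of $a$ to $a+1$ should be absorbed by interpreting the Faulhaber term with the correct index; we will check that the sum runs to $a$, so that $\int_0^{a}$ still suffices or the statement's $a$ matches.

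The main obstacle is the constant $2\zeta(2)$ in the first summand, which requires the careful comparison of the $k\neq0$ terms with $|\vp_0|^{-\M-1}$ described above.
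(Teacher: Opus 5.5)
\textbf{There is a gap in the step you flag as delicate: your bound does not give the constant $2\zeta(2)$ when $N=1$.}

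After removing $k=0$, you use $|k-\frac{\varphi}{2\pi}|\ge|k|-\frac12$ for every $k\neq0$. This gives
\[
\frac{1}{(2\pi)^{N+1}}\sum_{k\neq0}\Big|k-\frac{\varphi}{2\pi}\Big|^{-N-1}\le 2(1-2^{-N-1})\zeta(N+1)\,\pi^{-N-1},
\]
and hence a total of $|\varphi|^{-N-1}\big(1+2(1-2^{-N-1})\zeta(N+1)\big)$. For $N=1$ this constant is $1+\frac32\zeta(2)\approx3.47$, which exceeds $2\zeta(2)\approx3.29$. So your numerical claim is false, and the bound is not proved for $N=1$ with $|\varphi|$ near $\pi$. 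This case matters, since $n+r+N=1$ occurs in the second inequality.

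The loss comes from using $|k|-\frac12$ on both sides at once; it cannot be sharp for positive and negative $k$ simultaneously. Since $\sum_k|k-x|^{-N-1}$ is even in $x$, you may assume $0<\varphi\le\pi$. Then for $k\le-1$ you have $|k-x|=|k|+x\ge|k|$. The $k\neq0$ part is then at most $\zeta(N+1)\pi^{-N-1}$, which gives the constant $1+\zeta(2)\le2\zeta(2)$.

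The paper argues more sharply. It writes the $k\neq0$ part as $\zeta(N+1,1+x)+\zeta(N+1,1-x)$, which is maximal at $x=\frac12$ because each pair $(k+x)^{-N-1}+(k-x)^{-N-1}$ increases on $[0,\frac12]$. At $x=\frac12$ it equals $2(2^{N+1}-1)\zeta(N+1)-2^{N+1}<2^{N+1}(2\zeta(2)-1)$. The $-2^{N+1}$ there is exactly what your symmetric estimate throws away.

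\textbf{The second inequality does not follow from your first one, and your suggested fix for the index shift is wrong.}

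In $\bz^R_{a,v,N}$ the remainder is $R(e^{-i\varphi},-M,a+1,n,v)$ with $M=n+r+N$. So you need $C(e^{-i\varphi},-M,a+1)$, whose Faulhaber part is $\sum_{m=1}^{a}m^M$. Your integral bound then gives $\frac{(a+1)^{M+1}}{(M+1)!}$. This is not $\le\frac{a^{M+1}}{M!}$ in general: for $a=1$, $M=3$ you get $\frac23>\frac16$. Nor does $\int_0^a x^M\,dx$ suffice, because now $\sum_{m=1}^a m^M\ge\int_0^a x^M\,dx$.

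What works is the crude bound $\sum_{m=1}^{a}m^M\le a^{M+1}$. Equivalently, bound the Faulhaber part of $C(z,-N,a)/N!$ by $\frac{|a-1|^{N+1}}{N!}$. This is what the paper's proof actually derives, and it is what produces the stated $\frac{a^{n+r+N+1}}{(n+r+N)!}$. Your integral bound does prove the first displayed inequality exactly as written, but that version does not imply the second.

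The remaining parts of your argument are fine: the $\varphi=0$ case, the modulus-one phases, and the prefactor $\frac{n!(r+N-1)!}{(n+r+N)!}$.
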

\begin{proof}
In the case $0<\vp\leq \pi$ the term $\zeta(N+1,\frac{\vp}{2\pi})+\zeta(N+1,1-\frac{\vp}{2\pi})-(\frac{\vp}{2\pi})^{-\M-1}=\zeta(N+1,1+\frac{\vp}{2\pi})+\zeta(N+1,1-\frac{\vp}{2\pi})$ has a global maximum at $\vp=\pi$, given by $\zeta(N+1,\frac32)+\zeta(N+1,\frac12)=2(2^{\M+1}-1)\zeta(\M+1)-2^{\M+1}<2^{\M+1}\cdot(2\zeta(2)-1)$. Furthermore $\zeta(-N)-\zeta(-N,\la)=\sum_{m=1}^{a-1}m^N<|a-1|\cdot|a-1|^\M=|a-1|^{\M+1}$.
Thus
\begin{align*}
\frac{C(z,-N,a)}{\M!}&<\vp^{-N-1}\cdot\bigg(1+(\frac\vp\pi)^{\M+1}(2\zeta(2)-1)\bigg)+\frac{|a-1|^{\M+1}}{\M!}
\leq\frac{2\zeta(2)}{\vp^{N+1}}+\frac{|a-1|^{\M+1}}{\M!}.
\end{align*}
For $e^{i\vp}=1$ one finds
\[
\frac{C(z,-N,a)}{\M!}\leq\frac{2\zeta(2)}{(2\pi)^{\M+1}}+\frac{|a-1|^{N+1}}{\M!}.
\qedhere
\]
\end{proof}
In the case $\vp\not\equiv2\pi$, $\bz^R_{a,v,N}(v^rk^ne^{ik\vp+iv\psi})$ equals
\begin{align*}
v^{r}e^{iv\psi-i(a+v+1)\vp}n!
\int_0^\infty\left(\frac{e^{-\la u}}{e^u-e^{-i\vp}}-\sum_{j=0}^{\ms+r+\M-1}\frac{(-u)^j\Phi(e^{-i\vp},-j,\la+1)}{j!}\right)\frac{du}{u^{\ms+1}e^{vu}}.
\end{align*}
\begin{prop}\label{PHatStern}
Assume that the function $P$ verifies the symmetry $P(k)=-P(v+a-k)$.
Then $\mbox{\rm Res}\,P(v+a)=0$ and $\tilde P^*(v+a)=P^*(v+a)$ hold.
\end{prop}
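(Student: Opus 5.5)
The plan is to reduce both identities to statements about the phase-zero part of $P$, and then express them as integrals of polynomials over $[0,v+a]$. There the antisymmetry $k\mapsto v+a-k$ kills them.

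\emph{Reduction to the phase-zero part.} Put $w:=v+a$ and let $Q(k):=\sum_{j:\,\phi_j\equiv0\,{\rm mod}\,2\pi}c_jk^{\ms_j}$. By definition $\mbox{Res}\,P$, $P^*$ and $\tilde P^*$ depend only on $Q$. Write $P(w-k)=\sum_j c_je^{iw\phi_j}(w-k)^{\ms_j}e^{-ik\phi_j}$. Each phase $\phi_j$ becomes $-\phi_j$, so phases $\equiv0$ go to phases $\equiv0$. Exponential polynomials $\sum_\phi Q_\phi(k)e^{ik\phi}$ with distinct phases mod $2\pi$ decompose uniquely, the symmetry being read as an identity in $k$. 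Comparing the components with phase $0$ in $P(k)=-P(w-k)$ therefore gives $Q(k)=-Q(w-k)$.

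\emph{Vanishing of the residue.} We have $\mbox{Res}\,P(w)=\frac12\int_0^wQ(k)\,dk$. The substitution $k\mapsto w-k$ turns this integral into its own negative, so $\mbox{Res}\,P(w)=0$.

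\emph{The identity $\tilde P^*=P^*$.} From the definitions,
\[
\tilde P^*(w)-P^*(w)=\frac14\sum_{j:\,\phi_j\equiv0}c_j\,w^{\ms_j+1}\left(\frac{2}{(\ms_j+1)^2}+\frac{{\cal H}_{\ms_j}}{\ms_j+1}\right).
\]
The key step is to recognise the bracket as an integral. For $n\in\BN_0$ one has $\int_0^1x^n\log x\,dx=-\frac1{(n+1)^2}$ and $\int_0^1x^n\log(1-x)\,dx=-\frac{{\cal H}_{n+1}}{n+1}$. Using ${\cal H}_{n+1}=\sum_{\ell=1}^{n+1}\frac1\ell={\cal H}_n+\frac1{n+1}$, this gives
\[
\int_0^1x^n\log\big(x(1-x)\big)\,dx=-\frac{{\cal H}_n}{n+1}-\frac{2}{(n+1)^2}.
\]
Substituting $k=wx$ then yields
\[
\tilde P^*(w)-P^*(w)=-\frac14\int_0^1 w\,Q(wx)\log\big(x(1-x)\big)\,dx.
\]
The weight $\log(x(1-x))$ is integrable and symmetric under $x\mapsto1-x$, while $x\mapsto Q(wx)$ is antisymmetric under the same map. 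Hence the integral vanishes and $\tilde P^*(w)=P^*(w)$.

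The only genuinely non-routine point is finding the integral representation of $\frac{2}{(n+1)^2}+\frac{{\cal H}_n}{n+1}$ via the symmetric kernel $\log(x(1-x))$. The uniqueness argument for the phase decomposition is standard; it does require reading the hypothesis $P(k)=-P(v+a-k)$ as a formal identity in $k$, or for all integers $k$, which suffices by linear independence of $k^ne^{ik\phi}$ on $\BZ$.
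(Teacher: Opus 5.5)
Your proof is correct and is essentially the paper's argument. The paper likewise writes $\mbox{\rm Res}\,P(v+a)=\frac12\int_0^{v+a}P(k)\,dk$. For the second identity it uses $\mbox{\rm Res}\,P(v+a)=0$ to get $\tilde P^*(v+a)=-\frac12\int_0^{v+a}P(k)\log k\,dk$, then evaluates this on the antisymmetrised monomials $\frac{k^n-(v+a-k)^n}{2}$ via Gradshteyn--Ryzhik 4.253.1, obtaining $(k^n)^*(v+a)$.

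Your symmetric kernel $\log(x(1-x))$ packages the same computation into one identity valid for every phase-zero part $Q$, so the second claim no longer relies on the first. Your explicit reduction to $Q$ also makes precise what the paper leaves implicit when it speaks of ``any polynomial $P$''.
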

\begin{proof}
For any polynomial $P(k)$ satisfying the above symmetry one obtains
\[
\mbox{Res}\,P(v+a)=\frac12\int_0^{v+a}P(k)\,dk=0
\]
(compare \cite[Eq.\ (60)]{K2}). For $P(k)=k^{\ms}$ one finds
\[
\tilde P^*(p)-\mbox{Res}\,P(p)\cdot\log p=\frac{p^{\ms+1}}2\left(\frac1{(n+1)^2}-\frac{\log p}{\ms+1}\right)=-\frac12\int_0^p k^{\ms}\log k\,dk.
\]
Furthermore \cite[4.253.1]{GrRy} shows for $P(k):=\frac{k^{\ms}-(v+a-k)^{\ms}}2$ that
\begin{align*}
\tilde P^*(v+a)-\mbox{Res}\,P(v+a)&=-\frac12\int_0^{v+a}\frac{k^{\ms}-(v+a-k)^{\ms}}2\log k\,dk
\\&=-(v+a)^{\ms+1}\frac{{\cal H}_{\ms}}{4(\ms+1)}=(k^{\ms})^*(v+a).
\end{align*}
Hence $\tilde P^*(v+a)=P^*(v+a)$ if the assumed symmetry holds.
\end{proof}
\begin{prop}\label{AsympMitZeta}
Consider $\M\in\BZ^+$, $\la\in\BN_0$. Let $P:\BZ\to\BC$ be a function of the form
$k\mapsto\sum_{j=0}^m c_jk^{m_j} v^{\ms_j}e^{ik\phi_j}$. Then there is the aymptotic expansion for $v\to\infty$
\begin{align*}
\sum_{k=1}^{v+a}P(k)\log k=&
-2\tilde P^*(v+a)+2\mbox{\rm Res}P(v+a)\cdot\log v
\\&-\log v\cdot\bz\left(P(k+a+v)\right)-\bzs\left(P(k)\right)
\\&+\left(2\mbox{\rm Res}P(v+a)\cdot\log^\ddagger\left(\frac{a}{v}+1\right)\right)^{[\deg_v>-\M]}
\\&-
\bz\left(P(k+a+v)\log^\ddagger\left(\frac{k+a}{v}+1\right)\right)^{[\deg_v>-\M]}
\\&+\bz^R_{a,v,\M}(P(-k))
\end{align*}
where the error-term is $O(v^{-\M})$.
\end{prop}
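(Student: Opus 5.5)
By linearity of both sides in $P$ (all the operators $\bz,\bzs,\tilde P^*,\mbox{Res}$ and $\bz^R_{a,v,\M}$ are linear, and the powers $v^{\ms_j}$ are constants for the sum over $k$), it suffices to treat a single monomial $P(k)=k^{\ms}e^{ik\vp}$ with $\vp\in]-\pi,\pi]$. For this we start from the identity (\ref{SumToPhi}) with $v$ replaced by $v+a$:
\[
\sum_{k=1}^{v+a}k^{\ms}e^{ik\vp}\log k=e^{i(v+a+1)\vp}\frac{\partial\Phi}{\partial s}(e^{i\vp},-\ms,v+a+1)-e^{i\vp}\frac{\partial\Phi}{\partial s}(e^{i\vp},-\ms,1).
\]
The second summand is by Definition \ref{zetama} exactly $-\bzs(P(k))$. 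For the first summand we apply Th.\ \ref{AsymptLerch} with $\la$ replaced by $a+1$. The remainder term there, multiplied by $e^{i(v+a+1)\vp}$, is by definition of $\bz^R_{a,v,\M}$ the contribution $\bz^R_{a,v,\M}(P(-k))$. Here we use that the definition of $\bz^R$ involves $e^{-i\vp}$, i.e.\ it is applied to $P(-k)$, up to the sign $(-1)^{\ms}$ which is absorbed in the normalisation. The error is $O(v^{-\M})$ by the bound $|R|<C$ of Th.\ \ref{AsymptLerch}, made explicit in Prop.\ \ref{estimate2}.

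The main work is to identify the explicit terms of Th.\ \ref{AsymptLerch} with the terms involving $\bz$ and $\log^\ddagger$. The strategy is to compute $\bz\big(P(k+a+v)\log^\ddagger(\frac{k+a}{v}+1)\big)$ and $\log v\cdot\bz(P(k+a+v))$ directly. Expanding $(k+a+v)^{\ms}=\sum_m\binom{\ms}{m}v^m(k+a)^{\ms-m}$ and using $\bz\big((k+a)^{j}e^{i(k+a+v)\vp}\big)=e^{i(v+a+1)\vp}\Phi(e^{i\vp},-j,a+1)$ (from the definition of $\bz$ together with (\ref{FundShift})), the $\log v$ term reproduces $-\sum_m v^m\binom{\ms}{m}\Phi(e^{i\vp},m-\ms,a+1)\log v$. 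This agrees with the statement in Th.\ \ref{AsymptLerch} that the coefficient of $\log v$ is $-\Phi(e^{i\vp},-\ms,v+\la)$.

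Next we multiply $(k+a+v)^{\ms}$ by $\log^\ddagger(1+\frac{k+a}{v})=-\sum_{j\ge1}\frac{(-(k+a))^j}{jv^j}$ and collect the coefficient of $v^{m}$ applied to $(k+a)^{\ms-m}$ for $m\le\ms$, and of $v^{-m}$ applied to $(k+a)^{\ms+m}$. The combinatorial identities needed are
\[
\sum_{j\ge1,\ j\le \ms-m}\binom{\ms}{m+j}\frac{(-1)^{j+1}}{j}=\binom{\ms}{m}({\cal H}_{\ms}-{\cal H}_m),\qquad \sum_{j=m}^{\ms+m}\binom{\ms}{j-m}\frac{(-1)^{j+1}}{j}=\frac{(-1)^{m+1}}{m\binom{m+\ms}{\ms}}.
\]
The first is a standard harmonic-number identity. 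The second follows from the Beta integral $\int_0^1x^{m-1}(1-x)^{\ms}dx$. Together they reproduce the ${\cal H}$-terms and the $\frac{\Phi(e^{i\vp},-m-\ms,\la)}{(-v)^m m\binom{m+\ms}{\ms}}$ terms of Th.\ \ref{AsymptLerch}. Truncating with $[\deg_v>-\M]$ matches the finite sum $m\le\M-1$.

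In the case $e^{i\vp}=1$ the extra terms $v^{\ms+1}\big(\frac{\log v}{\ms+1}-\frac1{(\ms+1)^2}\big)$ appear, but with $v$ rather than $v+a$. We rewrite $(v+a)^{\ms+1}$ via $\log(v+a)=\log v+\log^\ddagger(\frac av+1)$. Then $-2\tilde P^*(v+a)+2\mbox{Res}P(v+a)\log v+\big(2\mbox{Res}P(v+a)\log^\ddagger(\frac av+1)\big)^{[\deg_v>-\M]}$ equals $\int_0^{v+a}k^{\ms}\log k\,dk$ expanded in $v$, as in the proof of Prop.\ \ref{PHatStern}. Its difference from $v^{\ms+1}(\ldots)$ is polynomial in $v$ times $\log v$ plus lower terms, and it must be absorbed by the $\zeta$-values $\Phi(1,-j,a+1)$ hidden in the $\bz$-terms.

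I expect this bookkeeping to be the main obstacle: checking that the two binomial-harmonic identities and the $z=1$ reshuffling match exactly, including which terms of degree $\le-\M$ go into the remainder. If the direct matching proves messy, I would instead verify the case $a=0$ first and then obtain general $a$ from (\ref{FundShift}) by adding the finitely many terms $k=v+1,\dots,v+a$, expanded in $v$.
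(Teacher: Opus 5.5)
\textbf{Where your proposal agrees with the paper.} Your route is the paper's. You reduce by linearity to a monomial, write the sum via (\ref{SumToPhi}), expand $\frac{\partial\Phi}{\partial s}(e^{i\vp},-n,v+a+1)$ by Th.~\ref{AsymptLerch}, identify the remainder with $\bz^R_{a,v,N}(P(-k))$, and read off the explicit coefficients as $\bz$ applied to the expansion of $-(k+a+v)^n\big(\log v+\log^\ddagger(\frac{k+a}{v}+1)\big)$. For $e^{i\vp}\neq1$ this works. Both of your binomial--harmonic identities are correct; the second is the Beta integral $\int_0^1x^{m-1}(1-x)^n\,dx$. They are the coefficientwise form of what the paper gets in one step by differentiating $v^{-s}(\frac{k+a}{v}+1)^{-s}$ in $s$ at $s=-n$ and using (\ref{AblBinomial}); see (\ref{Taylor21}).

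One bookkeeping correction: the factors $v^{n_j}$ (say $v^r$) are not inert on the right-hand side. Neither the truncation $[\deg_v>-N]$ nor $\bz^R_{a,v,N}$ commutes with multiplication by $v^r$. You must apply the $r=0$ case with $N+r$ in place of $N$. This is consistent, because the definition gives $\bz^R_{a,v,N}(v^rf)=v^r\bz^R_{a,v,N+r}(f)$.

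\textbf{The gap: the case $e^{i\vp}=1$.} You only assert that the discrepancy ``must be absorbed''. The identity you use, $\bz\big((k+a)^je^{i(k+a+v)\vp}\big)=e^{i(v+a+1)\vp}\Phi(e^{i\vp},-j,a+1)$, holds for $e^{i\vp}\neq1$; this is (\ref{MultLogv}), and it follows by expanding $(k+1+a)^j$ binomially in the Lerch series. At $\vp=0$ it fails. Instead one has
\[
\bz\big((k+a)^j\big)=\Phi(1,-j,a+1)+\frac{a^{j+1}}{j+1},
\]
which is Eq.~(\ref{MultLogv0}). If your identity held there, the $\bz$-terms would reproduce the Lerch terms exactly. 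You would then need $v^{n+1}\big(\frac{\log v}{n+1}-\frac1{(n+1)^2}\big)=\int_0^{v+a}x^n\log x\,dx$, which is false for $a>0$.

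The missing step is as follows. Put $c_j(v):=\frac{\partial}{\partial s}_{|s=-n}\big(v^{-j-s}\binom{-s}{j}\big)$. Replacing $\Phi(1,-j,a+1)$ by $\bz((k+a)^j)-\frac{a^{j+1}}{j+1}$ in the expansion of Th.~\ref{AsymptLerch} produces the $\bz$-terms of the statement, plus the extra sum
\[
-\sum_{j=0}^{N+n-1}\frac{a^{j+1}}{j+1}\,c_j(v)
=\Big(-\frac{\partial}{\partial s}_{|s=-n}\int_0^a(v+x)^{-s}\,dx\Big)^{[\deg_v>-N]}
=\Big(\int_v^{v+a}x^n\log x\,dx\Big)^{[\deg_v>-N]},
\]
where the truncation refers to the expansion in powers of $v$ for $v>a$.

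Th.~\ref{AsymptLerch} also contributes the extra term $v^{n+1}\big(\frac{\log v}{n+1}-\frac1{(n+1)^2}\big)=\int_0^vx^n\log x\,dx$. Adding it to the display gives the expansion of $\int_0^{v+a}x^n\log x\,dx$. You correctly identified this integral with $-2\tilde P^*(v+a)+2\mbox{Res}P(v+a)\log v+\big(2\mbox{Res}P(v+a)\log^\ddagger(\frac av+1)\big)^{[\deg_v>-N]}$. This is exactly the content of (\ref{MultLogv0}) and (\ref{wbouxe}) in the paper.

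Your fallback (prove $a=0$ first, then shift $a$) does not avoid this point. For polynomials $Q$ the shift rule reads $\bz(Q(k+a))=\bz(Q(k))-\sum_{k=1}^aQ(k)+\int_0^aQ(x)\,dx$, so the same integral correction appears.
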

Because of the influence of the $v^{n_j}$-factors the error term cannot easily be described as an operator acting on $P(v+a)\cdot\log^\ddagger\left(\frac{a}{v}+1\right)$. When one does this, either the degrees $n_j$ or $m_j$ would have to be incorporated in such an operator.
\begin{proof}
Because of the linearity it is sufficient to consider $P(k):=k^{\ms}v^re^{ik\vp}$. All of the operators except $\bz^R$ are linear with respect to the factor $v^r$.
In the case $e^{i\vp}\neq1$, the two expressions for multiples of $\log v$ in Th.\,\ref{AsymptLerch} show that
\begin{align}
\label{MultLogv}
\bz\left((k+a)^{\ms}e^{i(k+a)\vp}\right)
=e^{ia\vp}\sum_{j=0}^n\left({\ms\atop j}\right)a^{\ms-j}e^{i\vp}\Phi(e^{i\vp},-j,1)
\stackrel{\rm Th.\,\ref{AsymptLerch}}=e^{i(a+1)\vp}\Phi(e^{i\vp},-n,a+1).
\end{align}
Informally in the sense of Remark \ref{informal} this can be regarded as the equation
$$
''\bz\left((k+a)^{\ms}e^{i(k+a)\vp}\right)=\sum_{k=0}^\infty(k+a)^{\ms}e^{i(k+a)\vp}=e^{i(a+1)\vp}\Phi(e^{i\vp},-n,a+1)''
$$
where the middle term is not well-defined.
In the case $|\frac{a+k}{v}|<1$, deriving the Taylor expansion
$$
(a+k+v)^{-s}=v^{-s}\left(\frac{a+k}{v}+1\right)^{-s}=\sum_{m=0}^\infty v^{-m-s}\left({-s\atop m}\right)\cdot(k+a)^m
$$
shows that
\begin{eqnarray}
\nonumber\lefteqn{
-(a+k+v)^{\ms}\cdot\left[
\left(\log v+\log\left(\frac{a+k}{v}+1\right)\right)
\right]
=\frac\partial{\partial s}_{|s=-n}v^{-s}\left(\frac{a+k}{v}+1\right)^{-s}
}\\
\label{Taylor21}
&=&\sum_{m=0}^\infty (k+a)^m\frac\partial{\partial s}_{|s=-n}v^{-m-s}\left({-s\atop m}\right)
\\
\nonumber
&\stackrel{(\ref{AblBinomial})}=&\sum_{m=0}^{\ms}v^{m}\left({\ms\atop m}\right)\left(-\log v+{\cal H}_{m}-{\cal H}_{\ms}\right)\cdot(k+a)^{\ms-m}
+\sum_{m=1}^{\infty}\frac{(-v)^{-m}\cdot(k+a)^{m+n}}{m\left({m+n\atop n}\right)}
\\
\nonumber
&=&-(a+k+v)^{\ms}\log v+
\sum_{m=0}^{\ms}v^{m}\left({\ms\atop m}\right)\left({\cal H}_{m}-{\cal H}_{\ms}\right)\cdot(k+a)^{\ms-m}
\\&&\nonumber
+\sum_{m=1}^{\infty}\frac{(-v)^{-m}\cdot(k+a)^{\ms+m}}{m\left({m+n\atop n}\right)}.
\end{eqnarray}
Then for $e^{i\vp}\neq1$, $n\in\BN_0$ Th.\ \ref{AsymptLerch} implies
\begin{align}
\nonumber
\sum_{k=1}^{v+a}v^rk^{\ms}e^{ik\vp}\log k=&v^re^{i(v+a+1)\vp}\frac{\partial\Phi}{\partial s}(e^{i\vp},-n,v+a+1)
-v^re^{i\vp}\frac{\partial\Phi}{\partial s}(e^{i\vp},-n,1)
\\
\nonumber
\stackrel{\rm Th.\ \ref{AsymptLerch}}=&e^{i(v+a+1)\vp}\sum_{m=0}^{\M+n+r-1}\Phi(e^{i\vp},-m,\la+1)\frac\partial{\partial s}_{|s=-n}v^{r-m-s}\left({-s\atop m}\right)
\\
\nonumber
&+e^{i(v+a+1)\vp}R(e^{i\vp},-\M-n-r,\la+1,n,v)
\\
\nonumber
&\cdot\frac\partial{\partial s}_{|s=-n}v^{-\M-n-s}\left({-s\atop \M+n+r}\right)
-e^{i\vp}\frac{\partial\Phi}{\partial s}(e^{i\vp},-n,1)v^r
\\
\nonumber
\stackrel{(\ref{MultLogv})}=&\sum_{m=0}^{\M+n+r-1}\bz(e^{i(v+a+k)\vp}(k+a)^m)
\frac\partial{\partial s}_{|s=-n}v^{r-m-s}\left({-s\atop m}\right)
\\
\nonumber
&+e^{i(v+a+1)\vp}\Phi(e^{i\vp},-n,v+a+1)v^r\log v
\\
\nonumber
&+\bz^R_{a,v,\M}(v^r(-k)^{\ms}e^{-ik\vp})
\\
\label{abc}
&-e^{i(v+a+1)\vp}\Phi(e^{i\vp},-n,v+a+1)v^r\log v-e^{i\vp}\frac{\partial\Phi}{\partial s}(e^{i\vp},-n,1)v^r
\\
\nonumber
\stackrel{(\ref{Taylor21})}=&\bz\left(-e^{i(a+k+v)\vp}v^r(k+a+v)^{\ms}\log^\ddagger\left(\frac{k+a}{v}+1\right)\right)^{[\deg_v>-\M]}
\\
\nonumber
&+\bz^R_{a,v,\M}(v^r(-k)^{\ms}e^{-ik\vp})
\\&-\log v\cdot\bz\left(e^{i(a+k+v)\vp}v^r(k+a+v)^{\ms}\right)-\bzs\left(v^rk^{\ms}e^{ik\vp}\right).
\label{SummemitZeta}
\end{align}
In the case $e^{i\vp}=1$, the analogue of Eq.\ (\ref{MultLogv}) takes the form
\begin{align}\label{MultLogv0}
-\frac{a^{m+1}}{m+1}+\bz\left((k+a)^{m}e^{i(k+a)\vp}\right)
\stackrel{\rm Th.\,\ref{AsymptLerch}}=e^{i(a+1)\vp}\Phi(e^{i\vp},-m,a+1).
\end{align}
Therefore in this case there is an additional summand on the right hand side of Eq.\ (\ref{abc})
\begin{eqnarray}\nonumber
\lefteqn{
-\sum_{m=0}^{\M+n+r-1}\frac{a^{m+1}}{m+1}\frac\partial{\partial s}_{|s=-n}v^{r-m-s}\left({-s\atop m}\right)
\stackrel{(\ref{Taylor21})}=\left(\frac\partial{\partial s}_{|s=-n}v^{r-s}\int_0^a\left(\frac{\tilde a}{v}+1\right)^{-s}d\tilde a\right)^{[\deg_v>-\M]}
}\\&
=&(v^r(v+a)^{\ms+1}-v^{r+n+1})\left(\frac{\log v}{\ms+1}-\frac1{(n+1)^2}\right)+
\left(\frac{v^r(v+a)^{\ms+1}\log(\frac{a}v+1)}{\ms+1}\right)^{[\deg_v>-\M]}.
\label{wbouxe}
\end{eqnarray}
Furthermore the asymptotic in Th.\ \ref{AsymptLerch} contains additional terms of order $v^{r+n+1}$ and $v^{r+n+1}\log v$ which are independent of $a$ and cancel with the multiple of $v^{r+n+1}$ in Eq.\ (\ref{wbouxe}). The second one is the top term of $-\log v\cdot\bz\left(e^{i(a+k+v)\vp}v^r(k+a+v)^{\ms}\right)$ and it takes the form $-v^r\log v\cdot 2\mbox{Res}(k^{\ms})(v)$. Thus for arbitrary $\vp\in\BR$ one obtains the same formula as in Eq.\ (\ref{SummemitZeta}) with additional terms, which vanish in the case $e^{i\vp}\neq1$:
\begin{align*}
\sum_{k=1}^{v+a}v^rk^{\ms}e^{ik\vp}\log k=&
-2\tilde P^*(v+a)+2\mbox{Res}P(v+a)\cdot\log v
\\&+\left(2\mbox{Res}P(v+a)\cdot\log^\ddagger\left(\frac{a}{v}+1\right)\right)^{[\deg_v>-\M]}
\\&+
\bz\left(-e^{i(a+k+v)\vp}v^r(k+a+v)^{\ms}\log^\ddagger\left(\frac{k+a}{v}+1\right)\right)^{[\deg_v>-\M]}
\\&+\bz^R_{a,v,\M}(v^r(-k)^{\ms}e^{-ik\vp})
\\&-\log v\cdot\bz\left(e^{i(a+k+v)\vp}v^r(k+a+v)^{\ms}\right)-\bzs\left(v^rk^{\ms}e^{ik\vp}\right).
\qedhere
\end{align*}
\end{proof}

\section{Definition of analytic torsion and certain characteristic classes}
Let $(M,g)$ be a compact Hermitian manifold of complex dimension $n$, acted upon by a holomorphic isometry $t$. Let $\omega^{TM}$ denote the K\"ahler form. Corresponding to the decomposition $TM\otimes\BC=TM^{1,0}\oplus TM^{0,1}$ define $U=:U^{1,0}+U^{0,1}$ for $U\in TM\otimes\BC$. Consider a $t$-equivariant holomorphic vector bundle $E\to M$.
Equip $E$ with an Hermitian metric. Then the vector space $\frak A^{p,q}(M,E)$  of forms of holomorphic degree $p$ and anti holomorphic degree $q$ with coefficients in $E$ shall be equipped with the $L^2$-metric
$$
(\eta,\eta'):=\int_M\<\eta_x,\eta'_x\>\frac{(\omega^{TM})^{\wedge n}}{(2\pi)^nn!}.
$$
Given a differential form $\beta$ we denote its part in degree $q$ by $\beta^{[q]}$.
Let $\bar\partial:\frak A^{p,q}(M,E)\to\frak A^{p,q+1}(M,E)$ denote the Dolbeault operator with adjoint $\bar\partial^*$ and let $\square_q:=(\bar\partial+\bar\partial^*)^2:\frak A^{0,q}(M,E)\to\frak A^{0,q}(M,E)$ be the Kodaira-Laplace operator. The isometry $t$ induces an action $t^*$ on $\frak A^{p,q}(M,E)$. For ${\rm Re\,}s>>0$ the zeta function
$$Z(s):=\sum_{q=0}^{n}(-1)^{q+1}q\Tr (t^*\circ(\square_{q|(\ker\square_q)^\perp})^{-s})$$
is well-defined and it has a holomorphic continuation to $s=0$. Following \cite[Section 1]{K1} we set:
\begin{defin}
The $t$-equivariant complex analytic torsion is defined as
$
T_t(M,\mtr E):=Z'(0).
$
\end{defin}

Consider a holomorphic vector bundle $E$. Let $\nabla^E$ be the associated canonical covariant derivative with curvature $\Omega^E\in\frak A^{1,1}(M,\End E)$. 
Let $t$ be a holomorphic isometry acting on the Hermitian manifold $M$. Assume that $E$ is $t$-invariant as a holomorphic Hermitian bundle and that $E$ is equipped with an equivariant structure $t^E$. The
Hermitian vector bundle $\mtr E$ splits on the fixed point submanifold $M^t$ into a direct sum
$\bigoplus_{\zeta\in S^1}\mtr E_\zeta$, where the equivariant structure $t^E$ of $E$
acts on
$\mtr E_\zeta$ as $\zeta$. Then the Chern character form on $M^t$ is defined as
\begin{eqnarray*}
\ch_t(\mtr E)&:=&\sum_\zeta \zeta\ch(\mtr E_\zeta)\\ &=&\Tr t^E+\sum_\zeta \zeta
c_1(\mtr E_\zeta)+\sum_\zeta \zeta \left(\frac{1}{2}c_1^2(\mtr E_\zeta) -c_2(\mtr
E_\zeta)\right)+\dots
.
\end{eqnarray*}
The Todd form of an equivariant vector bundle is defined as
$$
\Td_t(\mtr E):=\frac{c_{{\rm rk}\,E_g}(\mtr E_g)}
{\ch_t(\sum_{j=0}^{{\rm rk}\,E} (-1)^j \Lambda^j \mtr E^*)}.
$$
Let $G_t$ denote the additive characteristic class given by
$$
G_t(L):=\bzs\ch_t(L^{\otimes k})=\sum_{m=0}^\infty e^{i \vp}\frac{\partial\Phi}{\partial s}(e^{i \vp},-m,1)\frac{c_1(L)^m}{m!}
$$
on a line bundle $L$ as above. Set as in \cite[Def. 1.5]{BG}
$$
\Td'_t(\mtr E):=\frac\partial{\partial \vep}_{|\vep=0}
\Td\Big(\frac{-1}{2\pi i}\Omega^E+\vep\Big)\prod_j \frac{\Td}{c_\top}\Big(\frac{-1}{2\pi i}\Omega^E+i\theta_j+\vep\Big).
$$
Hence $\frac{\Td'_t}{\Td_t}$ is the additive characteristic class given by
$$
\frac{\Td'_t}{\Td_t}(L)=\left\{
{\frac1{1-e^{c_1(L)}}+\frac1{c_1(L)}=1-\frac1{1-e^{-c_1(L)}}+\frac1{c_1(L)}
\atop
\frac1{1-e^{c_1(L)+i\vp}}=1-\frac1{1-e^{-c_1(L)-i\vp}}
}\right.\mbox{ if }
{\vp=0,\atop\vp\neq0}
$$
for a line bundle with $t$-action given by $e^{i\vp}$ on a component of the fixed point set. By the Taylor series (\ref{nonweirdTaylor2}) and (\ref{weirdTaylor2}) we see that
\begin{equation}\label{zetaTd'}
\frac{\Td'_t}{\Td_t}(L)=1+\bz\ch_t(L^{\otimes k})
=1+\sum_{j=0}^\infty\frac{e^{i\vp}\Phi(e^{i\vp},-j,1)}{j!}c_1(L)^j
\stackrel{(\ref{FundShift})}=\sum_{j=0}^\infty\frac{\Phi(e^{i\vp},-j,0)}{j!}c_1(L)^j.
\end{equation}
In particular in the case of isolated fixed points one finds
$$
\Td'_g(\mtr{TM})=\prod_j \frac{1}{1-e^{-i\theta_j}}\cdot\sum_j\frac{1}{1-e^{i\theta_j}}.
$$
Set $\Td^*_g(TM):=\prod_{j\atop \theta_j\notin 2\pi \BZ} \frac{1}{1-e^{-i\theta_j}}\cdot\sum_j
\left\{
{1/2\mbox { for }\theta_j\in 2\pi \BZ,\atop\frac{1}{1-e^{-i\theta_j}}\mbox{ else.}}
\right.$

\section{Asymptotic expansion of the torsion}\label{SectionAsymptSymm}
We use for symmetric and homogeneous spaces the same conventions as in \cite{KK}.
Let $G$ be a compact Lie group with neutral element $e_G$.  Fix a choice of a maximal torus $T\subset G$ and an ordering $\Sigma_G=\Sigma^+\cup\Sigma^-$. Let  $W_G$ denote the associated Weyl group, $\frak g$ the Lie algebra, $\Sigma_G$ the roots and $\rho_G:=\frac12\sum_{\a\in\Sigma^+}\a$ the Weyl vector. We denote the $G$-representation with highest weight $\lambda$ by $V^G_{\rho_G+\lambda}$ and its character by $\chi^G_{\rho+\lambda}$. These notations are extended to virtual representations in the representation ring to arbitrary weights $\lambda$ via the action of $W_G$ on $\rho+\lambda$. 
Fix $X_0\in\frak t$ in the closure of the positive Weyl chamber and let $K$ denote its stabilizer with respect to the ${\rm Ad}_G$-action. Set $\Psi:=\Sigma_G\setminus\Sigma_K$. Let $g_{X_0}$ denote the induced K\"ahler metric on $G/K$, let $\<\cdot,\cdot\>$  denote the associated ${\rm Ad}_G$-invariant scalar product on $\frak t^*$ and set $\a^\vee:=\frac{2\a}{\|\a\|^2}$ and $\Psi^+:=\{\a\in\Psi|\<\a^\vee,\rho+\lambda\>\geq0\}$, $\Psi^-:=\{\a\in\Psi|\<\a^\vee,\rho+\lambda\><0\}$. Every complex $K$-representation $V^K_{\rho_K+\mu}$ induces a $G$-invariant holomorphic vector bundle $E_{\rho+\mu}$ on $G/K$. If $\dim V^K_{\rho_K+\mu}=1$, we shall often denote this vector bundle by $L_{\rho+\mu}$.

For the sake of brevity, we shall often drop the index $G$ for objects associated to the Lie group specifically denoted as $G$ and simply use the notations $\rho,V_{\rho+\lambda},\chi_{\rho+\lambda}$.

We shall need the following two results about representations $V^K_{\rho_K+\lambda}$ of the abelian summand of the Lie algebra of a compact Lie group $K$ in the proof of Theorem \ref{AsymptSymmetricSpaces}.
\begin{prop}\label{TensorZerlegung}
Let $K$ be a compact Lie group. We choose a fixed maximal torus $T\subset K$ and a set of positive roots $\Sigma^+_K$. If $\lambda$ is a weight such that $\forall \alpha\in\Sigma^+_K:\<\alpha^\vee,\lambda\>=0$, then for any arbitrary weight $\mu$ and $\ell\in\BZ$ one finds
$$
V^K_{\rho_K+\ell\lambda+\mu}=(V^K_{\rho_K+\lambda})^{\otimes\ell}\otimes V^K_{\rho_K+\mu}.
$$
\end{prop}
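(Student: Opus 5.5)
The plan is to identify $V^K_{\rho_K+\lambda}$ as a one-dimensional representation and then compare highest weights. Recall the convention: $V^K_{\rho_K+\nu}$ is the (virtual) representation whose character is given by the Weyl character formula at $\rho_K+\nu$. This is extended to arbitrary $\nu$ via the $W_K$-action on $\rho_K+\nu$.

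First I show that $\lambda$ is $W_K$-invariant. The condition $\<\alpha^\vee,\lambda\>=0$ for all $\alpha\in\Sigma^+_K$ says that every simple reflection $s_\alpha$ fixes $\lambda$. Hence $w\lambda=\lambda$ for all $w\in W_K$. Consequently $e^\lambda$ is a character of $T$ which extends to a one-dimensional character of $K$, since it is trivial on the semisimple part (the roots of $K$ are orthogonal to $\lambda$). Concretely, the Weyl character formula gives
$$\chi^K_{\rho_K+\lambda}=\frac{\sum_{w\in W_K}\det(w)e^{w(\rho_K+\lambda)}}{\sum_{w}\det(w)e^{w\rho_K}}=e^{\lambda}\cdot\frac{\sum_w\det(w)e^{w\rho_K}}{\sum_w\det(w)e^{w\rho_K}}=e^\lambda,$$
because $w(\rho_K+\lambda)=w\rho_K+\lambda$.

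Next I apply the same computation with $\ell\lambda+\mu$ in place of $\lambda$. Since $w(\rho_K+\ell\lambda+\mu)=w(\rho_K+\mu)+\ell\lambda$ for every $w\in W_K$, the numerator of the Weyl character formula factors as $e^{\ell\lambda}\sum_w\det(w)e^{w(\rho_K+\mu)}$. Therefore
$$\chi^K_{\rho_K+\ell\lambda+\mu}=e^{\ell\lambda}\chi^K_{\rho_K+\mu}=(\chi^K_{\rho_K+\lambda})^\ell\chi^K_{\rho_K+\mu}.$$
This identity holds for all $\ell\in\BZ$, and also for non-dominant $\mu$ in the virtual sense. The reason is that the extension to arbitrary weights is defined through this same alternating-sum formula: if $\rho_K+\mu$ is singular, both sides vanish, and otherwise the conjugation by $W_K$ commutes with adding the $W_K$-invariant $\ell\lambda$. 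For negative $\ell$, $(V^K_{\rho_K+\lambda})^{\otimes\ell}$ is read as the dual power, whose character is $e^{\ell\lambda}$.

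Finally, representations, including virtual ones in the representation ring of $K$, are determined by their characters restricted to $T$. This gives the claimed isomorphism.

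The only step needing care is whether $e^\lambda$ really integrates to a character of $K$ rather than of a cover. I would argue this by noting that $V^K_{\rho_K+\lambda}$ is assumed to exist as a $K$-representation, with highest weight $\lambda$ and dimension $1$ by the Weyl dimension formula $\prod_{\alpha>0}\<\alpha^\vee,\rho_K+\lambda\>/\<\alpha^\vee,\rho_K\>=1$. It is therefore a one-dimensional representation with character $e^\lambda$.
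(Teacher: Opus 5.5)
Your proof is correct and follows essentially the same route as the paper's: $W_K$-invariance of $\lambda$ via the reflections $s_\alpha$, then factoring $e^{\ell\lambda}$ out of the alternating sum in the Weyl character formula to obtain $\chi^K_{\rho_K+\ell\lambda+\mu}=e^{\ell\lambda}\chi^K_{\rho_K+\mu}$. Your additional remarks (that $\chi^K_{\rho_K+\lambda}=e^{\lambda}$, the singular/virtual case, negative $\ell$, and that characters on $T$ determine virtual representations) only make explicit what the paper leaves implicit.
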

The weights $\lambda$ and $\mu$ do not need to be dominant.
\begin{proof}
The group $W_K$ is generated by the reflections $s_\alpha(\beta)=\beta-\<\alpha^\vee,\beta\>\cdot\alpha$ in the roots. Thus $\lambda$ is invariant under the action of $W_K$. The Weyl character formula shows that on generic elements of $K$ one gets the identity of characters
\[
\chi^K_{\rho_K+\ell\lambda+\mu}=\frac{\sum_{w\in W_K}(-1)^{{\rm sign}\, w}e^{2\pi i w(\rho_K+\ell\lambda+\mu)}}{\prod_{\alpha\in\Sigma^+_K}(e^{\pi i w\alpha}-e^{-\pi i w\alpha})}
=e^{2\pi i\ell\lambda}\frac{\sum_{w\in W_K}(-1)^{{\rm sign}\, w}e^{2\pi i w(\rho_K+\mu)}}{\prod_{\alpha\in\Sigma^+_K}(e^{\pi i w\alpha}-e^{-\pi i w\alpha})}.
\qedhere
\]
\end{proof}
\begin{Bem}A more functorial way of looking at this lemma is by means of the same argument as in \cite[proof of Lemma 12]{K2}, employing Adams operators $\psi^\ell$: Using the notation there, $\tilde\pi^!V^K_{\rho_K+\lambda}=V^T_{\lambda}$ holds as both sides are 1-dimensional.
Hence one obtains $V^K_{\rho_K+\ell\lambda+\mu}=\tilde\pi_!(\psi^\ell V^T_{\lambda}\otimes V^T_{\mu})=\tilde\pi_!(\psi^\ell\tilde\pi^!V^K_{\rho_K+\lambda}\otimes V^T_{\mu})=\psi^\ell V^K_{\rho_K+\lambda}\otimes V^K_{\rho_K+\mu}$.
\end{Bem}
The following result generalises \cite[p.\ 663, Remark 1.]{KK}, where it has been proven in the case $t=e_G$:
\begin{prop}\label{AbschDegelldurchDim}
Consider the same setting as in Proposition  \ref{TensorZerlegung}. For any weight $\mu$ and any $t\in G$ one finds that
$$
\deg_\ell\chi_{\rho+\ell\lambda+\mu}(t)\leq\dim_\BC (G/K)^t.
$$
\end{prop}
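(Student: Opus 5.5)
We read the statement as follows. The weight $\lambda$ satisfies $\<\alpha^\vee,\lambda\>=0$ for all roots $\alpha$ of $K$. Then the character $\chi_{\rho+\ell\lambda+\mu}(t)$, viewed as a function of $\ell$, should be a quasi-polynomial: a finite sum of terms $c\,\ell^m e^{i\ell\vp}$. The claim is that the maximal exponent $m$ is at most $\dim_\BC(G/K)^t$.

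The approach is to express the character by holomorphic Lefschetz (Bott) localisation on $G/K$, and then read off the $\ell$-degree from the fixed-point formula.

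\textbf{Step 1: realise the character as a Lefschetz number.} By Borel--Weil--Bott, the virtual $G$-representation $V_{\rho+\ell\lambda+\mu}$ equals $\sum_q(-1)^qH^{0,q}(G/K,E_{\rho+\ell\lambda+\mu})$, up to the sign and Weyl-group conventions used for virtual representations. By Proposition \ref{TensorZerlegung},
\[
E_{\rho+\ell\lambda+\mu}=L_{\rho+\lambda}^{\otimes\ell}\otimes E_{\rho+\mu}.
\]
Hence $\chi_{\rho+\ell\lambda+\mu}(t)$ is the equivariant holomorphic Euler characteristic of $L^{\ell}\otimes E$ evaluated at $t$.

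\textbf{Step 2: apply the holomorphic Lefschetz fixed point formula.} This gives
\[
\chi_{\rho+\ell\lambda+\mu}(t)=\int_{(G/K)^t}\Td_t(T(G/K))\,\ch_t(E_{\rho+\mu})\,\ch_t(L)^{\ell}.
\]
On each connected component $Z$ of $(G/K)^t$, the element $t$ acts on $L$ by a fixed constant $e^{i\vp_Z}$. Therefore
\[
\ch_t(L)^\ell=e^{i\ell\vp_Z}\exp(\ell c_1(L|_Z)).
\]
Only forms of degree at most $2\dim_\BC Z$ survive the integration over $Z$. So the expansion of the exponential contributes powers $\ell^m$ only for $m\le\dim_\BC Z\le\dim_\BC(G/K)^t$. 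Summing over the components yields a quasi-polynomial whose degree is bounded as claimed.

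\textbf{Alternative, purely algebraic route.} Conjugate $t$ into the torus. The fixed locus $(G/K)^t$ is a union of homogeneous spaces $Z_G(t)w K/K$. One could instead apply the Weyl character formula of $G$, or of the centraliser of $t$, and count root factors vanishing at $t$. Concretely, $\chi_{\rho+\ell\lambda+\mu}(t)$ is a limit of $\frac{\sum_w \pm e^{w(\rho+\ell\lambda+\mu)}}{\prod(e^{\alpha/2}-e^{-\alpha/2})}$ at $t$. Each factor of the denominator vanishing at $t$ needs one derivative in L'Hôpital, and each derivative raises the $\ell$-degree by at most one. After grouping $W_G$ by cosets of $W_K$, the factors coming from $\Sigma_K$ do not produce $\ell$-growth, since $\lambda$ is $W_K$-invariant. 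The vanishing factors from $\Psi^+$ number $\dim_\BC(G/K)^t$ on each fixed component.

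This coset bookkeeping is the main obstacle, and it is why the geometric Lefschetz route is preferable. The point that requires care in that route is Step 1 with non-dominant $\ell\lambda+\mu$: the identification of the virtual character with the Euler characteristic must hold for all $\ell\in\BZ$. This follows from Bott's theorem with the paper's convention for virtual characters via the $W_G$-action on $\rho+\lambda$. The case $t=e_G$ of \cite{KK} then reappears as the specialisation $(G/K)^t=G/K$.
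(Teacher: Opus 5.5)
Your main argument is correct and is essentially the paper's proof. Like the paper, you use Proposition \ref{TensorZerlegung} to write $E_{\rho_K+\ell\lambda+\mu}=L_{\rho_K+\lambda}^{\otimes\ell}\otimes E_{\rho_K+\mu}$, apply the holomorphic Lefschetz (Atiyah--Segal--Singer) formula, factor $\ch_t(L)^\ell=(\ch_t(L)^{[0]})^\ell e^{\ell c_1(L)}$ on each component of $(G/K)^t$, and bound the $\ell$-degree by the fixed-point dimension; the alternative L'H\^opital route you sketch is not needed.
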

\begin{proof}
By Proposition \ref{TensorZerlegung} and the Atiyah--Segal--Singer fixed point formula as in \cite[Th. 13]{K2}, we get
\begin{eqnarray}
\chi_{\rho+\ell\lambda+\mu}(t)&=&\int_{M^t}\Td_t(TM)\ch_t(E_{\rho_K+\ell\lambda+\mu})
\nonumber
\\&=&\int_{M^t}\Td_t(TM)\ch_t(L_{\rho_K+\lambda})^\ell\ch_t(E_{\rho_K+\mu})
\label{MtfuerCharakter}
\\&=&\int_{M^t}\Td_t(TM)(\ch_t(L_{\rho_K+\lambda})^{[0]})^\ell e^{\ell c_1(L_{\rho_K+\lambda})}\ch_t(E_{\rho_K+\mu}).
\nonumber
\end{eqnarray}
The right hand side has degree in $\ell$ less or equal to $\dim_\BC (G/K)_t$.
\end{proof}
For a general Hermitian symmetric space, the value of the torsion is given by \cite[Th. 5.2]{KK} in terms of characters $\chi_{\rho+\ell \lambda+k\alpha}$. Notice that for a fixed $t\in G$, $\chi_{\rho+\ell\lambda+k\alpha}(t)$ is a linear combination of terms of the form $k^{m}\ell^{m'}e^{i\vp'\ell+i\vp k}$ with $\vp,\vp'\in\BR$, $m,m'\in\BN_0$ by \cite[Eq.\ (66)]{K2}. As explained in \cite[Eq.\ (66)]{K2}, for $t=e^X$ the angles $\vp$, $\vp'$ are always of the form $2\pi w\a(X)$, $2\pi w'\lambda(X)$ with $w,w'\in W_G$.
The operators $\bz,\bzs$ do not change the growth in $\ell$.

\begin{theor}\label{AsymptSymmetricSpaces}
Let $M=G/K$ be a compact Hermitian symmetric space and let $\mtr L:=\mtr L_{\rho_K+\lambda}$ be a $G$-invariant holomorphic Hermitian line bundle on $M$. Assume that $\mtr L$ is positive in the sense that $\forall\alpha\in\Psi:\<\alpha^\vee,\lambda\>>0$. Fix a K\"ahler metric $g_{X_0}$ on $M$. Choose $X\in{\frak t}$ and set $t:=e^X$. Then the asymptotic behaviour of the equivariant analytic torsion for high powers of $\mtr L$ is given by 
\begin{eqnarray}\nonumber
\lefteqn{
T_t(G/K,\mtr L^\ell_{\rho_K+\ell\lambda})=
-\sum_{\a\in\Psi}\log\frac{\langle\a^\vee,\ell\lambda\rangle}{\a^\vee(X_0)}\cdot\bz\chi_{\rho+\ell\lambda+k\alpha}(t)
+\sum_{\a\in\Psi} \bzs
\chi_{\rho+\ell\lambda+k\a}(t)
}
\\&&\nonumber
-\sum_{\a\in\Psi}\bz\left(\chi_{\rho+\ell\lambda+k\alpha}(t)
\cdot\log^\ddagger\left(\frac{\langle\a^\vee,\rho\rangle+k}{\langle\a^\vee,\ell\lambda\rangle}+1\right)\right)^{[\deg_\ell>-\M]}
\\&&-\sum_{\a\in\Psi}\bz^R_{\langle\a^\vee,\rho\rangle,\langle\a^\vee,\lambda\rangle\ell,\M}\chi_{\rho+\ell\lambda+k\alpha}(t)
.
\end{eqnarray}
\end{theor}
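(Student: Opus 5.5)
The plan is to start from the closed formula for the equivariant torsion on Hermitian symmetric spaces, \cite[Th. 5.2]{KK}, in its equivariant form, with the $t$-action inserted as in \cite{K2}. For the bundle $L^\ell_{\rho_K+\ell\lambda}$ it expresses $T_t$ as a sum over $\a\in\Psi$ of three kinds of terms: finite sums of the form $\sum_{k=1}^{\langle\a^\vee,\rho+\ell\lambda\rangle}\chi_{\rho+\ell\lambda+k\a}(t)\log(\dots)$, the regularised terms $\bzs\chi_{\rho+\ell\lambda+k\a}(t)$ and $\bz\chi_{\rho+\ell\lambda+k\a}(t)$, and the metric contribution $\log\a^\vee(X_0)\cdot\bz\chi_{\rho+\ell\lambda+k\a}(t)$. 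I first rewrite these finite sums, after shifting $k$ and using the symmetry of characters under the reflection $s_\a$, as sums $\sum_{k=1}^{v+a}P(k)\log k$. Here the parameters are
\[
v=\langle\a^\vee,\lambda\rangle\ell,\qquad a=\langle\a^\vee,\rho\rangle .
\]
The function $P(k)$ is $\chi_{\rho+\ell\lambda+(k-a-v)\a}(t)$, possibly with a sign. By \cite[Eq.\ (66)]{K2} it is a linear combination of $k^m\ell^{m'}e^{i\vp k+i\vp'\ell}$. Since $\ell=v/\langle\a^\vee,\lambda\rangle$, these are exactly functions of the type allowed in Prop.\ \ref{AsympMitZeta}, namely $v^{r}k^{n}e^{ik\vp+iv\psi}$.

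Next I verify that this $P$ satisfies $P(k)=-P(v+a-k)$. This is the Weyl reflection identity $\chi_{s_\a(\mu)}=-\chi_\mu$ applied to $\mu=\rho+\ell\lambda+(k-a-v)\a$. Indeed $s_\a$ sends $k-a-v$ to $-(k-a-v)-\langle\a^\vee,\rho+\ell\lambda\rangle$, which corresponds to replacing $k$ by $v+a-k$. Prop.\ \ref{PHatStern} then gives $\mbox{Res}\,P(v+a)=0$ and $\tilde P^*=P^*$. So the polynomial terms $-2\tilde P^*+2\mbox{Res}P\log v$ and the $\log^\ddagger(a/v+1)$ term of Prop.\ \ref{AsympMitZeta} either vanish or cancel against the $P^*$-type terms already present in the formula of \cite[Th. 5.2]{KK}, following the same bookkeeping as in \cite[Section 8]{KK} for $t=e_G$.

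Applying Prop.\ \ref{AsympMitZeta} then yields several pieces. There is $-\log v\cdot\bz(P(k+a+v))$, where $P(k+a+v)=\chi_{\rho+\ell\lambda+k\a}(t)$. Combined with the $\log\a^\vee(X_0)$ term, this gives $-\log\frac{\langle\a^\vee,\ell\lambda\rangle}{\a^\vee(X_0)}\bz\chi_{\rho+\ell\lambda+k\a}(t)$. There is the truncated $\bz(\chi\cdot\log^\ddagger(\frac{\langle\a^\vee,\rho\rangle+k}{\langle\a^\vee,\ell\lambda\rangle}+1))^{[\deg_\ell>-\M]}$ term. There is the remainder $\bz^R_{a,v,\M}(P(-k))$, which becomes the stated $\bz^R$-term once $P(-k)$ is re-expressed through the reflection symmetry. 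Finally there is $-\bzs P(k)$, which has to be combined with the $\bzs$-terms from \cite[Th. 5.2]{KK} so that only $+\sum_\a\bzs\chi_{\rho+\ell\lambda+k\a}(t)$ survives. For this I would use Eq.\ (\ref{bzsSymmetry}) and Eq.\ (\ref{bzSymmetry}) to move between $P(k)$ and $P(-k)$, together with the reflection symmetry. Truncation in $\deg_v$ equals truncation in $\deg_\ell$, since $v$ is a fixed multiple of $\ell$.

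I expect the main obstacle to be the exact bookkeeping of signs and shifts: which of $k$, $-k$ and $k+a+v$ appears in each operator, and making sure the $\bzs$ and $\bz$ contributions from the torsion formula and from Prop.\ \ref{AsympMitZeta} cancel to the stated form. The degenerate angles $e^{i\vp}=1$ need particular care, since there Res and $P^*$ genuinely contribute. My approach is to track a single monomial $k^ne^{ik\vp}$ through all steps, checking both the $e^{i\vp}\neq1$ and the $e^{i\vp}=1$ cases, and then to invoke linearity.
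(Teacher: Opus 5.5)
Your proposal takes the same route as the paper. You start from \cite[Th. 5.2]{KK} and apply Prop.\ \ref{AsympMitZeta} with $v=\langle\a^\vee,\ell\lambda\rangle$, $a=\langle\a^\vee,\rho\rangle$. The reflection symmetry together with Prop.\ \ref{PHatStern} then kills the Res-terms and cancels the $P^*$-terms against those in \cite[Th. 5.2]{KK}. The only bookkeeping corrections concern the input formula. It is written with $\chi_{\rho+\ell\lambda-k\a}$, and its $\bzs$-part is $-2\bzs\chi^{\rm odd}_{\rho+\ell\lambda-k\a}(t)$, so combining it with $+\bzs\chi_{\rho+\ell\lambda-k\a}(t)$ is pure linearity and Eq.\ (\ref{bzsSymmetry}) is not needed. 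Finally, Eq.\ (\ref{bzSymmetry}) together with $\Psi^+=\Psi$ absorbs the extra term $-\chi_{\rho+\ell\lambda}(t)\sum_{\a\in\Psi^+}\log\a^\vee(X_0)$ into $\sum_{\a\in\Psi}\bz\chi_{\rho+\ell\lambda+k\a}(t)\log\a^\vee(X_0)$.
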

\begin{proof}
According to \cite[Th. 5.2]{KK}, the equivariant analytic torsion of
$\mtr L^\ell_{\rho_K+\ell\lambda}$ is given by
\begin{eqnarray}\nonumber\label{TorsionAusKK}
\lefteqn{
T_t(G/K,\mtr L^\ell_{\rho_K+\ell\lambda})=-2\sum_{\a\in\Psi} \bzs
\chi^\odd_{\rho+\ell\lambda-k\a}(t)
-2\sum_{\a\in\Psi} 
\chi^*_{\rho+\ell\lambda-k\a}(t)(\langle\a^\vee,\rho+\ell\lambda\rangle)}\\
&&\nonumber
-\sum_{\a\in\Psi} \bz
\chi_{\rho+\ell\lambda-k\a}(t)\cdot\log\a^\vee(X_0)
-\chi_{\rho+\ell\lambda}(t)\sum_{\a\in\Psi^+}\log\a^\vee(X_0)
\\&&
-\sum_{\a\in\Psi}\sum_{k=1}^{\langle\a^\vee,\rho+\ell\lambda\rangle}
\chi_{\rho+\ell\lambda-k\a}(t)\cdot\log k.
\end{eqnarray}
By Eq.\ (\ref{bzSymmetry}),
\begin{eqnarray}\label{zetaSchluckt1}
-\bz\chi_{\rho+\ell\lambda-k\alpha}-\chi_{\rho+\ell\lambda}
=\bz\chi_{\rho+\ell\lambda+k\alpha}.
\end{eqnarray}
In this case of an ample line bundle one gets $\Psi^+=\Psi$. Therefore the $\log\a^\vee(X_0)$-factor simplifies as in \cite[Th. 18]{K2}. We shall use the symmetry
\begin{equation}\label{Lie-symmetry}
\chi_{\rho+\ell\lambda-k\a}=-\chi_{\rho+\ell\lambda+(k-\<\alpha^\vee,\rho+\ell\lambda\>)\a}
\end{equation} 
induced by reflecting at the hyperplane perpendicular to $\a$. 
By Proposition \ref{AsympMitZeta} one obtains
\begin{align}
\label{AsymptSymmetricSpacesProof}
\nonumber
\lefteqn{
-\sum_{\a\in\Psi}\sum_{k=1}^{\langle\a^\vee,\rho+\ell\lambda\rangle}
\chi_{\rho+\ell\lambda-k\alpha}(t)\cdot\log k
}
\\
\nonumber
=&\sum_{\a\in\Psi}\Bigg(2\widetilde{\chi_{\rho+\ell\lambda-k\alpha}}(t)^*(\langle\alpha^\vee,\rho+\ell\lambda\rangle)+\log\langle\a^\vee,\ell\lambda\rangle\cdot
\bz\chi_{\rho+\ell\lambda-(k+\<\alpha^\vee,\rho+\ell\lambda\>)\alpha}(t)
\\
\nonumber
&-2\mbox{Res}\chi_{\rho+\ell\lambda-k\alpha}(t)(\langle\alpha^\vee,\rho+\ell\lambda\rangle)\cdot\log
\langle\a^\vee,\ell\lambda\rangle
\\
\nonumber
&-\left(2\mbox{Res}\chi_{\rho+\ell\lambda-k\alpha}(t)(\langle\alpha^\vee,\rho+\ell\lambda\rangle)
\cdot\log^\ddagger\left(\frac{\langle\a^\vee,\rho\rangle}{\langle\a^\vee,\ell\lambda\rangle}+1\right)\right)^{[\deg_\ell>-\M]}
\\
\nonumber
&+\bz\left(\chi_{\rho+\ell\lambda-(k+\<\alpha^\vee,\rho+\ell\lambda\>)\alpha}(t)
\cdot\log^\ddagger\left(\frac{\langle\a^\vee,\rho\rangle+k}{\langle\a^\vee,\ell\lambda\rangle}+1\right)\right)^{[\deg_\ell>-\M]}
\\
\nonumber
&-\bz^R_{\langle\a^\vee,\rho\rangle,\langle\a^\vee,\ell\lambda\rangle,\M}\chi_{\rho+\ell\lambda+k\alpha}(t)+\bzs\chi_{\rho+\ell\lambda-k\alpha}(t)
\Bigg)
\\
\nonumber
=&\sum_{\a\in\Psi}\Bigg(2\widetilde{\chi_{\rho+\ell\lambda-k\alpha}}(t)^*(\langle\alpha^\vee,\rho+\ell\lambda\rangle)-\log\langle\a^\vee,\ell\lambda\rangle\cdot\bz\chi_{\rho+\ell\lambda+k\alpha}(t)
\\
\nonumber
&-2\mbox{Res}\chi_{\rho+\ell\lambda-k\alpha}(t)(\langle\alpha^\vee,\rho+\ell\lambda\rangle)\cdot\log\langle\a^\vee,\ell\lambda\rangle
\\
\nonumber
&-\left(2\mbox{Res}\chi_{\rho+\ell\lambda-k\alpha}(t)(\langle\alpha^\vee,\rho+\ell\lambda\rangle)
\cdot\log^\ddagger\left(\frac{\langle\a^\vee,\rho\rangle}{\langle\a^\vee,\ell\lambda\rangle}+1\right)\right)^{[\deg_\ell>-\M]}
\\
\nonumber
&-\bz\left(\chi_{\rho+\ell\lambda+k\alpha}(t)
\cdot\log^\ddagger\left(\frac{\langle\a^\vee,\rho\rangle+k}{\langle\a^\vee,\ell\lambda\rangle}+1\right)\right)^{[\deg_\ell>-\M]}
\\&-\bz^R_{\langle\a^\vee,\rho\rangle,\langle\a^\vee,\ell\lambda\rangle,\M}\chi_{\rho+\ell\lambda+k\alpha}(t)
+\bzs\chi_{\rho+\ell\lambda-k\alpha}(t)
\Bigg)
.
\end{align}

By Proposition \ref{PHatStern} the symmetry (\ref{Lie-symmetry}) implies that $\mbox{Res}\chi_{\rho+\ell\lambda-k\alpha}(t)(\langle\alpha^\vee,\rho+\ell\lambda\rangle)=0$ and that the term $\sum_{\a\in\Psi}2\widetilde{\chi_{\rho+\ell\lambda-k\alpha}}(t)^*(\langle\alpha^\vee,\rho+\ell\lambda\rangle)$ cancels with the term $-2\sum_{\a\in\Psi} 
\chi_{\rho+\ell\lambda-k\a}(t)^*(\langle\a^\vee,\rho+\ell\lambda\rangle)$ in Eq.\ (\ref{TorsionAusKK}).
Together one thus obtains the asymptotic expansion
\begin{align*}\nonumber
\lefteqn{
T_t(G/K,\mtr L^\ell_{\rho_K+\ell\lambda})=-2\sum_{\a\in\Psi} \bzs
\chi^\odd_{\rho+\ell\lambda-k\a}(t)
+\sum_{\a\in\Psi} \bz
\chi_{\rho+\ell\lambda+k\a}(t)\cdot\log\a^\vee(X_0)
}
\\&\nonumber
-\sum_{\a\in\Psi}\Bigg(\log\langle\a^\vee,\ell\lambda\rangle\cdot\bz\chi_{\rho+\ell\lambda+k\alpha}(t)
+\bz\left(\chi_{\rho+\ell\lambda+k\alpha}(t)
\cdot\log^\ddagger\left(\frac{\langle\a^\vee,\rho\rangle+k}{\langle\a^\vee,\ell\lambda\rangle}+1\right)\right)^{[\deg_\ell>-\M]}
\\&+\bz^R_{\langle\a^\vee,\rho\rangle,\langle\a^\vee,\ell\lambda\rangle,\M}\chi_{\rho+\ell\lambda+k\alpha}(t)
-\bzs\chi_{\rho+\ell\lambda-k\alpha}(t)
\Bigg)
.
\qedhere
\end{align*}
\end{proof}

\begin{theor}\label{ErsteTerme}
Consider the same situation as in Theorem \ref{AsymptSymmetricSpaces} and the set of values $\{x_j\,|\,j\in J\}:=\{\frac{\langle\a^\vee,\lambda\rangle}{\a^\vee(X_0)}\,|\,\a\in\Psi\}$. As $X_0$, $\lambda$ and the set $\Psi$ are $W_K$-invariant, each $\Psi_j:=\{\a\in\Psi\,|\,\frac{\langle\a^\vee,\lambda\rangle}{\a^\vee(X_0)}=x_j\}$ defines a virtual $K$-representation with weights $\Psi_j$.
Let $E_j\to G/K$ denote the associated vector bundles. Let $e^{i\vp}$ denote the action of $t$ on $L_{\rho_K+\lambda}$ on a component of $M^t$.
The top term of the asymptotic expansion in Th.\ \ref{AsymptSymmetricSpaces} is given by
\begin{eqnarray*}
\lefteqn{
T_t((M,g_{X_0}),\mtr L_{\rho_K+\lambda}^\ell)
}\\&=&-\log\ell\cdot\int_{M^t}(\Td_t'(TM)-n \Td_t(TM))\ch_t(L_{\rho_K+\lambda})^\ell
\\&&-\sum_{j\in J}\log x_j
\int_{M^t}\Td_t(TM)\left(\frac{\Td_t'(E_j)}{\Td_t(E_j)}-\rk E_j\right)\ch_t(L_{\rho_K+\lambda})^\ell
\\&&+\int_{M^t}\Td_t(TM)G_t(TM)\ch_t(L_{\rho_K+\lambda})^\ell
\\&&-\sum_{m=1-\M}^{\dim M^t-1}\ell^m\int_{M^t}\Td_t(TM)\ch_t(L_{\rho_K+\lambda}^\ell)^{[0]}\bz\Bigg(\sum_{\a\in\Psi}\ch_t(E_{\rho_K+k\alpha})
\\&&\cdot
\sum_{j=\max\{1,-m\}}^{\dim M^t-m}\frac{(-1)^{j+1}}{j(m+j)!}\left(\frac{\langle\a^\vee,\rho\rangle+k}{\langle\a^\vee,\lambda\rangle}\right)^j
c_1(L_{\rho_K+\lambda})^{m+j}
\Bigg)
+O(\ell^{-\M})
\\&=&\int_{M^t}\Td_t^*(TM)\ch(L^\ell_{\rho_K+\lambda})\log\ell
+O(\ell^{\dim M^t})
.
\end{eqnarray*}
\end{theor}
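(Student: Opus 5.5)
Starting from the expansion in Th. \ref{AsymptSymmetricSpaces}, I would convert each of its terms into an integral over $M^t$ using the fixed point formula (\ref{MtfuerCharakter}). The key tool is Proposition \ref{TensorZerlegung}, which gives
$$\chi_{\rho+\ell\lambda+k\alpha}(t)=\int_{M^t}\Td_t(TM)\ch_t(L_{\rho_K+\lambda})^\ell\ch_t(E_{\rho_K+k\alpha}).$$
Since $\bz$ and $\bzs$ act linearly in $k$ and commute with the integration over $M^t$, every term of Th. \ref{AsymptSymmetricSpaces} becomes an integral over $M^t$. The characteristic classes involved are $\bz\sum_{\a}\ch_t(E_{\rho_K+k\a})$ and $\bzs\sum_\a\ch_t(E_{\rho_K+k\a})$.

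\textbf{Step 1: the $\log$ terms.} I would split $\log\frac{\langle\a^\vee,\ell\lambda\rangle}{\a^\vee(X_0)}$ as $\log\ell+\log x_j$ for $\a\in\Psi_j$. On $M^t$ the bundle $\bigoplus_{\a\in\Psi}E_{\rho_K+\a}$ is $TM$ as a $K$-representation (up to the sign convention for $\Psi$), and $\bigoplus_{\a\in\Psi_j}E_{\rho_K+\a}=E_j$. By additivity and (\ref{zetaTd'}),
$$\bz\sum_{\a\in\Psi_j}\ch_t(E_{\rho_K+k\a})=\frac{\Td'_t}{\Td_t}(E_j)-\rk E_j.$$
Taking $j$ to range over all of $\Psi$ gives $\frac{\Td'_t}{\Td_t}(TM)-n$. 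Multiplying by $\Td_t(TM)$ yields $\Td'_t(TM)-n\Td_t(TM)$, which gives the first two lines of the statement. Similarly, the definition of $G_t$ gives $\bzs\sum_\a\ch_t(E_{\rho_K+k\a})=G_t(TM)$, which is the third line.

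\textbf{Step 2: the $\log^\ddagger$ term.} I would write
$$\ch_t(L_{\rho_K+\lambda})^\ell=\ch_t(L^\ell)^{[0]}\,e^{\ell c_1(L)}.$$
Then I would expand the formal series $\log^\ddagger\big(\frac{\langle\a^\vee,\rho\rangle+k}{\ell\langle\a^\vee,\lambda\rangle}+1\big)=\sum_{j\ge1}\frac{(-1)^{j+1}}{j}\ell^{-j}(\cdots)^j$ and multiply by $\sum_p\frac{\ell^pc_1^p}{p!}$. Collecting the coefficient of $\ell^m$ gives $p=m+j$, which is the inner double sum. The bounds $j\ge\max\{1,-m\}$ and $m+j\le\dim M^t$ hold because forms of degree above $\dim M^t$ vanish on $M^t$. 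The truncation $[\deg_\ell>-\M]$ gives $m\ge1-\M$, and the remainder $\bz^R$ term is $O(\ell^{-\M})$ by Proposition \ref{estimate2}, as in Th. \ref{AsymptSymmetricSpaces}.

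\textbf{Step 3: the top term.} Only the $\log\ell$ term contributes to order $\ell^{\dim M^t}\log\ell$. Its top part comes from the top-degree component $e^{\ell c_1}$ paired with the degree-zero part of $\Td'_t-n\Td_t$. For isolated directions, $\frac{1}{1-e^{i\theta_j}}-1=-\frac{1}{1-e^{-i\theta_j}}$. For $\theta_j\in2\pi\BZ$, the degree-zero part of $\frac1{1-e^{x}}+\frac1x-1$ is $-\frac12$. Together these give $-\Td^*_t(TM)$ in degree zero, so the top term becomes $+\int_{M^t}\Td^*_t\ch(L^\ell)\log\ell$.

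I expect the main obstacle to be the identification of $\bigoplus_{\a\in\Psi}E_{\rho_K+\a}$ with $TM$ restricted to $M^t$, including its sign and orientation conventions. Some care is also needed so that the $\bz$ operator, applied inside the integral with $k$ entering through $c_1(E_{\rho_K+k\a})$, matches (\ref{zetaTd'}) term by term. If this identification is off by duality, I would use (\ref{bzSymmetry}) to switch between $\a$ and $-\a$.
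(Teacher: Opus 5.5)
Your overall route is the paper's: rewrite each term of Th.~\ref{AsymptSymmetricSpaces} as an integral over $M^t$ via (\ref{MtfuerCharakter}), apply (\ref{zetaTd'}) and the definition of $G_t$, and expand $e^{\ell c_1(L_{\rho_K+\lambda})}$ against $\log^\ddagger$. Your Step~3 goes further than the paper and actually carries out the degree-zero computation that produces $-\Td^*_t(TM)$, which the paper leaves implicit.

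The gap is in Step~1 and in the identification with $G_t(TM)$. There you need the identity
\[
\sum_{\a\in\Psi_j}\ch_t(E_{\rho_K+k\a})=\ch_t(\psi^kE_j)\qquad\mbox{for all }k\in\BZ,
\]
where $\psi^k$ is the Adams operation. The reason is that (\ref{zetaTd'}) and the definition of $G_t$ are statements about $\ch_t(L^{\otimes k})$ for line bundles $L$. They reach $E_j$ and $TM$ only through the splitting principle, and only once the $k$-dependence is of this form.

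This identity does not follow from additivity of $\ch_t$ together with the case $k=1$, $\bigoplus_{\a\in\Psi_j}E_{\rho_K+\a}=E_j$. Your picture of $k$ ``entering through $c_1(E_{\rho_K+k\a})$'' also fails as soon as $K$ is non-abelian. The bundle $E_{\rho_K+k\a}$ comes from the virtual $K$-representation $V^K_{\rho_K+k\a}$ defined by the Weyl character formula; it is not $E_{\rho_K+\a}^{\otimes k}$, and its rank varies with $\a$ and $k$. For example, on $\BP^n$ with $n\ge2$, exactly one $V^K_{\rho_K+\a}$ with $\a\in\Psi$ is nonzero, and it is $n$-dimensional. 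For $k=2$, two of the $V^K_{\rho_K+2\a}$ are nonzero, namely ${\rm Sym}^2$ and $-\Lambda^2$ of it.

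The missing input is what the paper imports from \cite[Lemma 2]{K2}. It is the same mechanism as in the proof of Prop.~\ref{TensorZerlegung}. Since $\Psi_j$ is $W_K$-invariant, the Weyl character formula for $K$ gives
\[
\sum_{\a\in\Psi_j}\chi^K_{\rho_K+k\a}=\frac{\sum_{w\in W_K}\det(w)\,e^{w\rho_K}\sum_{\a\in\Psi_j}e^{kw\a}}{\sum_{w\in W_K}\det(w)\,e^{w\rho_K}}=\sum_{\a\in\Psi_j}e^{k\a}.
\]
The right-hand side is the character of $\psi^k$ applied to the virtual representation with weights $\Psi_j$. Passing to associated bundles and applying $\ch_t$ gives the displayed identity. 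Applying (\ref{zetaTd'}) to the formal line-bundle summands then yields $\frac{\Td'_t}{\Td_t}(E_j)-\rk E_j$, and in the same way $G_t(TM)$.

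With this added, your Steps~1--3 go through. The issue you singled out as the main obstacle, $TM$ versus its dual, is only a matter of the conventions fixed in \cite{K2}. The real content of Step~1 is the $k$-dependence, i.e.\ the Adams operation.
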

\begin{proof}

According to Theorem \ref{AsymptSymmetricSpaces},
\begin{eqnarray*}
T_t(G/K,\mtr L^\ell_{\rho_K+\ell\lambda})&=&
-\log\ell\cdot\sum_{\alpha\in\Psi}\bz\chi_{\rho+\ell\lambda+k\alpha}(t)+O(\ell^{\dim M^t})
\end{eqnarray*}
Using the Adams operator $\psi^k$, \cite[(103),(106)]{K2} shows that
\begin{eqnarray*}
-\log\ell\cdot\bz\sum_{\alpha\in\Psi} \chi_{\rho+\ell\lambda+k\alpha}(t)
&=&-\log\ell\cdot\int_{M^t}
\Td_t(TM)\bz\ch_t(\psi^kTM)\ch_t(L_{\rho_K+\lambda}^\ell)
\\&\stackrel{(\ref{zetaTd'})}=&-\log\ell\cdot\int_{M^t}
(\Td'_t(TM)-\dim M\cdot \Td_t(TM))\ch_t(L_{\rho_K+\lambda}^\ell).
\end{eqnarray*}
Similarly
$$
\sum_{\a\in\Psi} \bzs\chi_{\rho+\ell\lambda+k\a}(t)=\int_{M^t}
\Td_t(TM)G_t(TM)\ch_t(L_{\rho_K+\lambda}^\ell).
$$
By Equation (\ref{MtfuerCharakter}) one finds
\begin{eqnarray*}
\lefteqn{
-\sum_{\a\in\Psi}\log\frac{\langle\a^\vee,\lambda\rangle}{\a^\vee(X_0)}\cdot\bz\chi_{\rho+\ell\lambda+k\alpha}(t)
}
\\&=&-\sum_{\a\in\Psi}\log\frac{\langle\a^\vee,\lambda\rangle}{\a^\vee(X_0)}\cdot
\int_{M^t}\Td_t(TM)\ch_t(L_{\rho_K+\lambda})^\ell\bz\ch_t(E_{\rho_K+k\alpha})
\end{eqnarray*}
By replacing ${\rm Ad}^{1,0}_{G/K}$ in the proof of \cite[Lemma 2]{K2} by $E_j$, one verifies that $\bigoplus_{\a\in\Psi_j}E_{\rho_K+k\alpha}=\psi^kE_j$ and thus Equation (\ref{zetaTd'}) shows that
\begin{eqnarray*}
\lefteqn{
-\sum_{\a\in\Psi}\log\frac{\langle\a^\vee,\lambda\rangle}{\a^\vee(X_0)}\cdot\bz\chi_{\rho+\ell\lambda+k\alpha}(t)
}\\&=&-\sum_{j\in J}\log x_j\cdot
\int_{M^t}\Td_t(TM)\ch_t(L_{\rho_K+\lambda})^\ell\bz\ch_t(\psi^kE_j)
\\&=&-\sum_{j\in J}\log x_j\cdot
\int_{M^t}\Td_t(TM)\left(\frac{\Td_t'(E_j)}{\Td_t(E_j)}-\rk E_j\right)\ch_t(L_{\rho_K+\lambda})^\ell
\end{eqnarray*}
Furthermore
\begin{eqnarray*}
\lefteqn{
-\bz\left(\chi_{\rho+\ell\lambda+k\alpha}(t)
\cdot\log^\ddagger\left(\frac{\langle\a^\vee,\rho\rangle+k}{\langle\a^\vee,\ell\lambda\rangle}+1\right)\right)^{[\deg_\ell=m]}
=-\int_{M^t}\Td_t(TM)\bz\Bigg(\ch_t(E_{\rho_K+k\alpha})
}\\&&\cdot\sum_{j=\max\{1,-m\}}^{\dim M^t-m}\frac{(-1)^{j+1}}j\left(\frac{\langle\a^\vee,\rho\rangle+k}{\langle\a^\vee,\ell\lambda\rangle}\right)^j
\ch_t(L_{\rho_K+\lambda})^{[0]}\frac{\ell^{m+j}c_1(L_{\rho_K+\lambda})^{m+j}}{(m+j)!}
\Bigg)
\\&=&-\ell^m\int_{M^t}\Td_t(TM)\ch_t(L_{\rho_K+\lambda}^\ell)^{[0]}\bz\Bigg(\ch_t(E_{\rho_K+k\alpha})
\\&&\cdot\sum_{j=\max\{1,-m\}}^{\dim M^t-m}\frac{(-1)^{j+1}}{j(m+j)!}\left(\frac{\langle\a^\vee,\rho\rangle+k}{\langle\a^\vee,\lambda\rangle}\right)^j
c_1(L_{\rho_K+\lambda})^{m+j}
\Bigg)
\\&\stackrel{(\ref{Lie-symmetry})}=&\ell^m\int_{M^t}\Td_t(TM)\ch_t(L_{\rho_K+\lambda})^{[0]}\bz\Bigg(\ch_t(E_{\rho_K-(k+\<\alpha^\vee,\rho\>)\alpha})
\\&&\cdot\sum_{j=\max\{1,-m\}}^{\dim M^t-m}\frac{(-1)^{j+1}}{j(m+j)!}\left(\frac{\langle\a^\vee,\rho\rangle+k}{\langle\a^\vee,\lambda\rangle}\right)^j
c_1(L_{\rho_K+\lambda})^{m+j}
\Bigg)
\end{eqnarray*}
where only components of the fixed point submanifold with $\dim M^t>m$ have a nontrivial contribution. Here $\ch_t(L_{\rho_K+\lambda})^{[0]}$ is equal to the action of $t$ on $L_{\rho_K+\lambda}$ on each component of $M^t$.
\end{proof}
Prop.\ \ref{estimate2} implies the following:
\begin{lemma}\label{estimate3}
For $\a\in\Psi$ consider the character $\chi_{\rho+\ell\lambda+k\alpha}(t)=\sum_{m,r\geq0\atop\vp,\psi\in]-\pi,\pi]}f_{\a,m,r,\vp,\psi}e^{ik\vp+i\ell\psi}\ell^{r}k^m$ and coefficients $f_{\a,r,m,\vp,\psi}\in\BC$. When setting $\vp_0:=\left\{{2\pi,\atop\vp}\right.{{\rm if\,}\vp=0\atop{\rm else}}$ one finds as an estimate for the error term in Th.\ \ref{AsymptSymmetricSpaces}
\begin{align*}
&\left|\sum_{\a\in\Psi}\bz^R_{\langle\a^\vee,\rho\rangle,\langle\a^\vee,\ell\lambda\rangle,\M}\chi_{\rho+\ell\lambda+k\alpha}(t)\right|
\\&\leq\sum_{\a\in\Psi\atop r,m,\vp}\ell^{-\M}\langle\a^\vee,\lambda\rangle^{-r-\M}m!(r+\M-1)!\left(\frac{2\zeta(2)}{|\vp_0|^{m+r+N+1}}+\frac{\langle\a^\vee,\rho\rangle^{m+r+\M+1}}{(m+r+\M)!}\right)\left|\sum_\psi f_{\a,r,m,\vp,\psi}e^{i\ell\psi}\right|
\\&\leq\frac{(\dim(G/K)^t+\M-1)!}{(2\pi\ell c_2)^\M}\cdot c_1
\end{align*}
with $c_1\in\BR^+$ being independent of $\M$ and $\ell$ and
\[
c_2=\min_{q\in\BZ,w\in W_G,\a\in\Psi}\<\a^\vee,\lambda\>\cdot\left\{
{1\atop|q+w\alpha(X)|}\quad {{\rm if\,}w\a(X)\in\BZ,\atop{\rm else.}}
\right.
\]
\end{lemma}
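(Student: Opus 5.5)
The plan is to reduce everything to Prop.\ \ref{estimate2}, using the linearity of $\bz^R$ over terms with the same $(k,v)$-dependence. First write the character in the stated form, collecting for fixed $(\a,r,m,\vp)$ all $\psi$-terms into $\sum_\psi f_{\a,r,m,\vp,\psi}e^{i\ell\psi}$. The operator $\bz^R_{a,v,\M}$ with $a=\langle\a^\vee,\rho\rangle$ and $v=\langle\a^\vee,\lambda\rangle\ell$ acts on $\ell^rk^me^{ik\vp+i\ell\psi}$. Rewrite $\ell^r=\langle\a^\vee,\lambda\rangle^{-r}v^r$. In the defining formula the only $\psi$-dependence is the unimodular factor $e^{iv\psi}$ (after reparametrising $\psi$ to the $v$-variable). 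Hence the terms with common $(r,m,\vp)$ share the same $R$-factor, and summing over $\psi$ produces the factor $|\sum_\psi f e^{i\ell\psi}|$ after the triangle inequality.

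Applying the second inequality of Prop.\ \ref{estimate2} with $n=m$ gives a bound
\[
v^{-\M}m!(r+\M-1)!\Big(\frac{2\zeta(2)}{|\vp_0|^{m+r+\M+1}}+\frac{a^{m+r+\M+1}}{(m+r+\M)!}\Big).
\]
Substituting $v^{-\M}=\ell^{-\M}\langle\a^\vee,\lambda\rangle^{-\M}$ and combining with the $\langle\a^\vee,\lambda\rangle^{-r}$ from $\ell^r$ yields the first displayed inequality. One caveat: the sign conventions in $\bz^R$ (the argument is $P(-k)$, i.e.\ $e^{-i\vp}$) require checking that $|\vp_0|$ is unchanged under $\vp\mapsto-\vp$ on $]-\pi,\pi]$. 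This holds up to the endpoint $\pi$, which is harmless.

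For the second inequality, I would bound each term uniformly in $m,r$. By Prop.\ \ref{AbschDegelldurchDim}, $r\le\dim(G/K)^t$. Since the character is a finite sum, $m$ and the number of terms are bounded independently of $\M,\ell$, and $|\sum_\psi f e^{i\ell\psi}|\le\sum|f|$. It remains to estimate
\[
(r+\M-1)!\,\langle\a^\vee,\lambda\rangle^{-r-\M}\Big(\frac{2\zeta(2)}{|\vp_0|^{m+r+\M+1}}+\frac{a^{m+r+\M+1}}{(m+r+\M)!}\Big).
\]
Use $(r+\M-1)!\le(\dim(G/K)^t+\M-1)!$, and use the fact that the angles $\vp$ are reductions mod $2\pi$ of $2\pi w\a(X)$ by \cite[Eq.\ (66)]{K2}. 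Thus $|\vp_0|=2\pi\min_q|q+w\a(X)|$, or $2\pi$ when $w\a(X)\in\BZ$, so $\langle\a^\vee,\lambda\rangle|\vp_0|\ge2\pi c_2$. The first summand is therefore at most $(2\pi c_2)^{-\M}$ times a factor depending only on the finitely many data. The second summand contains $1/(m+r+\M)!$, which beats $(r+\M-1)!$ and any geometric factor $(a/\langle\a^\vee,\lambda\rangle)^{\M}$, so after enlarging $c_1$ it is also bounded by $(2\pi c_2)^{-\M}(\dim(G/K)^t+\M-1)!\cdot\mathrm{const}$.

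The main obstacle is the final uniformity claim: absorbing all $\M$-dependence into $(\dim(G/K)^t+\M-1)!/(2\pi c_2)^\M$ with constant $c_1$ independent of $\M$. This requires that $c_2$ is chosen as the minimum with the correct normalisation, since a factor like $|\vp_0|^{-m-r-1}$ remains and must be bounded by a $\M$-independent constant. It also requires checking that $\sup_\M (2\pi c_2)^\M a^{\M}/(\M!\,\langle\a^\vee,\lambda\rangle^{\M})$ is finite, which holds since the factorial dominates.
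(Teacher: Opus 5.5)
Your proposal is correct and follows the paper's proof. The first bound is Prop.~\ref{estimate2} after writing $\ell^r=\langle\a^\vee,\ell\lambda\rangle^r/\langle\a^\vee,\lambda\rangle^r$. The second uses $r\leq\dim(G/K)^t$ from Prop.~\ref{AbschDegelldurchDim} together with the fact that the angles have the form $2\pi w\a(X)$.

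The only difference is the second summand of the bracket. The paper factors out $|\vp_0|^{-(m+r+\M+1)}$ and uses $|\vp_0|\leq 2\pi$ with the exponential series to get the explicit constant $2\zeta(2)+2\pi\langle\a^\vee,\rho\rangle e^{2\pi\langle\a^\vee,\rho\rangle}$. This is the same factorial-versus-geometric mechanism as your $\sup_\M$ argument. Note, however, that $(r+\M-1)!$ is absorbed by $(r+\M-1)!\leq(\dim(G/K)^t+\M-1)!$, not by $1/(m+r+\M)!$; one factorial cannot handle both $(r+\M-1)!$ and the geometric factor, as your wording suggests.
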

The last term achieves its minimum at
$\M=\lfloor2\pi\ell c_2\rfloor-\dim(G/K)^t$,
if this value is positive, since the quotient increases when $\M$ increases or decreases from this value. The coefficients $f_{\a,m,r,\vp,\psi}$ are given in terms of $\Sigma_G$ and $W_G$ in \cite[Eq.\ (66)]{K2}. Notice that $m+r\leq\dim (G/T)^t$ holds by same proof as of Prop.\ \ref{AbschDegelldurchDim} when applied to the bundle $L_\lambda^{\otimes\ell}\otimes L_\a^{\otimes k}\to G/T$, as then
\[
\chi_{\rho+\ell\lambda+k\alpha}(t)=\int_{(G/T)^t}\Td_t(T(G/T))\ch_t(L_\lambda^{\otimes\ell})\ch_t(L_\a^{\otimes k}).
\]
\begin{proof}
The first bound follows from Prop.\ \ref{estimate2} when writing $\ell^r$ as $\frac{\langle\a^\vee,\ell\lambda\rangle^r}{\langle\a^\vee,\lambda\rangle^r}$.
Furthermore $r\leq\dim (G/K)^t$ holds by Prop.\ \ref{AbschDegelldurchDim}. Using $|\vp_0|\leq2\pi$ and the Tayler expansion of $\exp$, the bracket in the second line in Lemma \ref{estimate3} can be bounded by
\begin{align*}
\frac{2\zeta(2)}{|\vp_0|^{m+r+N+1}}+\frac{\langle\a^\vee,\rho\rangle^{m+r+\M+1}}{(m+r+\M)!}
&=\frac1{|\vp_0|^{m+r+N+1}}\left(2\zeta(2)+\frac{(|\vp_0|\langle\a^\vee,\rho\rangle)^{m+r+\M+1}}{(m+r+\M)!}\right)
\\&\leq \frac1{|\vp_0|^{m+r+N+1}}\left(2\zeta(2)+2\pi\langle\a^\vee,\rho\rangle e^{2\pi\langle\a^\vee,\rho\rangle}\right).
\end{align*}
Finally by \cite[Eq.\ (66)]{K2} the angles $\vp$ for $\chi_{\rho+\ell\lambda+k\alpha}(t)$ take the form $2\pi w\a(X)$ with $w\in W_G$.
\end{proof}

\section{Comparison with other results}
In this section we give an alternative, elementary proof that shows the compatibility of Th.\ \ref{ErsteTerme} with the results by Bismut--Vasserot and Finski. Suprisingly the proof concerning the summand $\frac7{24}$ in Eq.\ (\ref{BVF2}) turns out to be much more involved than the comparison of the other parts. We need the following three Propositions for this summand.

Let $\<x,y\>_G$ denote the canonical scalar product on $\frak t^*$ associated to $\Sigma$, satisfying $\<x,y\>_G=\sum_{\a\in\Sigma}\<x,\a\>_G\<\a,y\>_G$. This is the scalar product induced by the negative Killing form (\cite[p.\ 214]{BtD}), and for $G$ simple it is proportional to $\<\cdot,\cdot\>$ by Schur's Lemma.
\begin{prop}\label{LinWeyldim}
Let $G$ be a compact Lie group and $V$ an irreducible $G$-representation with character $\chi$ and weights $(\a_j)_{j\in J}$. Then the map
$$
\BZ\to\BQ,\ k\mapsto\sum_{j\in J}\<\a_j,\rho_G\>\prod_{\beta\in\Sigma^+}\frac{\<\beta^\vee,\rho_G+k\a_j\>}{\<\beta^\vee,\rho_G\>}
$$
is linear in $k$.
\end{prop}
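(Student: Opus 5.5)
The plan is to recognise the product in the statement as a polynomial that is anti-invariant under $W_G$. Let $N:=|\Sigma^+|$ and set
$$D(x):=\prod_{\beta\in\Sigma^+}\<\beta^\vee,x\>,$$
a homogeneous polynomial of degree $N$ on $\frak t^*$. Then the summand attached to $\a_j$ is $\<\a_j,\rho_G\>\,D(\rho_G+k\a_j)/D(\rho_G)$, which is the Weyl dimension formula evaluated at $k\a_j$. The classical fact I will use is that $D(wx)=(-1)^{{\rm sign}\,w}D(x)$ for $w\in W_G$. This holds because a reflection $s_\a$ permutes the positive roots other than $\a$ (up to sign) and sends $\a$ to $-\a$.

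First expand in $k$ by Taylor's formula. Since $D$ is a polynomial,
$$D(\rho_G+k\a_j)=\sum_{m=0}^{N}\frac{k^m}{m!}\big(\partial_{\a_j}^mD\big)(\rho_G),$$
where $\partial_{\a}$ denotes the directional derivative along $\a$. The map in the statement therefore equals $\sum_m \frac{k^m}{m!}\,G_m(\rho_G)/D(\rho_G)$, where
$$G_m(x):=\sum_{j\in J}\<\a_j,x\>\,\big(\partial_{\a_j}^mD\big)(x).$$
It suffices to show that $G_m\equiv0$ for $m\ge2$.

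Second, establish the symmetry of $G_m$. The multiset of weights $(\a_j)_{j\in J}$ of $V$ is $W_G$-invariant, since the character is $W_G$-invariant. Also $\<\cdot,\cdot\>$ is $W_G$-invariant, and $\partial_{w\a}^m\big(D\circ w^{-1}\big)=\big(\partial_{\a}^mD\big)\circ w^{-1}$. Substituting $x\mapsto wx$ and reindexing $\a_j\mapsto w\a_j$ then gives
$$G_m(wx)=(-1)^{{\rm sign}\,w}G_m(x).$$
Hence $G_m$ is a $W_G$-anti-invariant polynomial.

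Third, use the standard divisibility. An anti-invariant polynomial vanishes on every reflection hyperplane $\beta^\perp$, so it is divisible by each linear form $\<\beta^\vee,\cdot\>$. These forms are pairwise non-proportional for $\beta\in\Sigma^+$, so it is divisible by their product $D$. Consequently a nonzero anti-invariant polynomial has degree at least $N$. But $G_m$ is homogeneous of degree $1+(N-m)$, or is zero when $m>N$. For $m\ge2$ this degree is less than $N$, so $G_m=0$. This leaves only the terms $m=0,1$, and the map is linear in $k$.

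The main obstacle is the careful verification of the anti-invariance in the second step, namely the change of variables in the directional derivatives together with the permutation of the weights. I will do this by writing $\partial_{\a}^mD(x)=\frac{d^m}{dt^m}\big|_{t=0}D(x+t\a)$. With this form, $D(wx+tw\a)=D\big(w(x+t\a)\big)=\pm D(x+t\a)$ is immediate. The divisibility lemma is classical and I would only cite or briefly sketch it.
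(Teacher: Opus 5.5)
Your proof is correct and takes a genuinely different route from the paper. The paper first reduces to $G$ simple and to $W_G$-invariant classes of weights of equal norm. It then regards $\sum_j V_{\rho_G+k\alpha_j}$ as the Adams operation $\psi^kV$, a virtual representation with weights $k\alpha_j$, and applies the Freudenthal--de Vries formula. That formula expresses $\sum_\mu\langle\mu,\rho_G\rangle_G^2$ through $\langle\lambda,\lambda+2\rho_G\rangle_G$ times the dimension. Comparing coefficients and dividing by $k$ gives the closed form $k\sum_j\bigl(12\langle\alpha_j,\rho_G\rangle_G^2-\tfrac12\|\alpha_j\|_G^2\bigr)$ for the canonical scalar product.

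You instead Taylor-expand the Weyl polynomial $D$ and eliminate every coefficient of $k^m$ with $m\ge 2$, because it is a $W_G$-anti-invariant polynomial of degree $N+1-m<N$. Your route is more elementary and more general:
\begin{itemize}
\item It uses neither simplicity of $G$, nor irreducibility of $V$, nor the equal-norm reduction. Any finite $W_G$-invariant multiset of vectors and any $W_G$-invariant scalar product will do. This is convenient, since the paper later applies the statement to $K$, which has a circle factor.
\item It gives the slope directly as $\sum_j\langle\alpha_j,\rho_G\rangle\sum_{\beta\in\Sigma^+}\langle\beta^\vee,\alpha_j\rangle/\langle\beta^\vee,\rho_G\rangle$. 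This is exactly the form used for $T_{42}$ in the proof of Theorem~\ref{BVFinski}.
\item The same degree count, with $\langle\alpha_j,\rho_G\rangle^m$ in place of $\langle\alpha_j,\rho_G\rangle$, shows that the corresponding polynomial has degree at most $m$, as asserted in the Remark after the proposition.
\end{itemize}
In exchange, the paper's route evaluates the slope in terms of Casimir-type data.

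One small addition is needed. Your degree count does not remove the $m=0$ term $G_0(\rho_G)/D(\rho_G)=\langle\sum_j\alpha_j,\rho_G\rangle$, since $G_0$ has degree $N+1$. As written you therefore only get affine linearity, while the paper uses the map in the form $k$ times its slope. This term vanishes: $\mu:=\sum_j\alpha_j$ is $W_G$-fixed and $w_0\rho_G=-\rho_G$, so $\langle\mu,\rho_G\rangle=\langle w_0\mu,w_0\rho_G\rangle=-\langle\mu,\rho_G\rangle$. More generally, replacing $\alpha_j$ by $w_0\alpha_j$ and using $D(w_0x)=(-1)^ND(x)=D(-x)$ shows that the whole map is odd in $k$. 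This also gives the parity statement of the Remark.
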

This value equals $\sum_{j\in J}\<\a_j,\rho_G\>\dim V_{\rho_G+k\a_j}$ in terms of virtual representations $V_{\rho_G+k\a_j}$.
\begin{proof}
Any $G$-representation can be decomposed into virtual representations where all weights have the same norm, as the subsets of these weights are $W_G$-invariant. Thus without loss of generality we can assume that $\|\a_j\|\equiv{\rm const.}$ and that $G$ is simple.
The virtual representation $\psi^kV$ has weights $(k\a_j)_{j\in J}$ and the character $\oplus_{j\in J}\chi_{\rho_G+k\a_j}$. Then Freudenthal--de Vries' formula \cite[47.10.2]{FrdV} shows that
\begin{eqnarray*}
\sum_{j\in J}\<k\a_j,\rho_G\>_G^2&=&\sum_{j\in J}
\frac{\<k\a_j,2\rho_G+k\a_j\>_G}{24}
\prod_{\beta\in\Sigma^+}\frac{\<\beta,\rho_G+k\a_j\>}{\<\beta,\rho_G\>}
\\&=&\sum_{j\in J}
\frac{k\<\a_j,\rho_G\>_G}{12}
\prod_{\beta\in\Sigma^+}\frac{\<\beta,\rho_G+k\a_j\>}{\<\beta,\rho_G\>}+\frac{k^2}{24}\sum_{j\in J}\|\a_j\|_G^2.
\end{eqnarray*}
In the case of the canonical scalar product the above map is thus equal to
\[
k\cdot\sum_{j\in J}\left(12\<\a_j,\rho_G\>_G^2-\frac{1}2\|\a_j\|_G^2\right).
\qedhere
\]
\end{proof}
\begin{Bem}
By generalising this proof using higher degree Taylor expansions of the Weyl character formula in the approach in \cite[47.10]{FrdV}, one can show that for any $m\in\BN_0$, $\sum_{j\in J}\<\a_j,\rho_G\>^m\prod_{\beta\in\Sigma^+}\frac{\<\beta,\rho_G+k\a_j\>}{\<\beta,\rho_G\>}$ is a polynomial of degree $m$ in $k$ which is alternately odd or even.
\end{Bem}

\begin{prop}\label{VglSymm1}
Consider a compact simple Lie group $G$ and let $G/K$ be an Hermitian symmetric space of complex dimension $n$. Let $\lambda$ be a weight such that $\forall \gamma\in\Sigma_K:\<\gamma,\lambda\>=0$ and $\forall\a\in\Psi:\<\gamma,\lambda\>>0$. Then $\rho-\rho_K=c_\lambda\lambda$, where for any $\a_0\in\Psi$
\begin{equation}
\label{VglSymm11}
c_\lambda=\sqrt{\frac{n}{8}}\frac1{\|\lambda\|_G}=\frac{n}{8\<\lambda,\rho-\rho_K\>_G}=\frac1{4\<\alpha_0,\lambda\>_G}
\end{equation}
and $\|\rho\|_G^2=\|\rho_K\|_G^2+\frac{n}8$.
\end{prop}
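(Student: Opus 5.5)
The plan is to reduce everything to two structural facts about Hermitian symmetric spaces and then compute with the canonical scalar product $\<\cdot,\cdot\>_G$. Since $\lambda$ is orthogonal to all roots of $K$, and $\Psi$ together with the positivity condition singles out $\Psi^+=\{\a\in\Psi\,|\,\<\a,\lambda\>>0\}$, the vector $\rho-\rho_K=\frac12\sum_{\a\in\Psi^+}\a$ is the half-sum of the positive noncompact roots. I will use the following standard facts.

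\emph{First fact.} The set $\Psi^+$ is $W_K$-invariant, so $\rho-\rho_K$ is fixed by $W_K$. Each reflection $s_\gamma$ with $\gamma\in\Sigma_K$ then gives $\<\gamma^\vee,\rho-\rho_K\>=0$. Because $G/K$ is Hermitian symmetric with $G$ simple, the center of $K$ is one-dimensional. Hence the orthogonal complement of $\Sigma_K$ in $\frak t^*$ is a line, and it contains $\lambda$. This gives $\rho-\rho_K=c_\lambda\lambda$ for some scalar $c_\lambda$.

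\emph{Second fact.} For a symmetric space, $\frak g\otimes\BC=\frak k\otimes\BC\oplus\frak p^+\oplus\frak p^-$ with $[\frak p^+,\frak p^+]=0$, and the center of $\frak k$ acts on $\frak p^\pm$ by a single scalar each. Consequently $\<\a,\lambda\>_G=:a$ is the same for all $\a\in\Psi^+$, and it equals $-a$ on $\Psi^-$. One can see this by noting that $\frak p^+$ is an irreducible $K$-module, or directly from the fact that the sum of two noncompact positive roots is never a root. This is the one step that uses symmetry rather than just homogeneity, and I expect it to be the main point that needs a careful justification. I would cite the standard grading of a Hermitian symmetric Lie algebra (eigenvalues of $\mathrm{ad}$ of the central element are only $0,\pm1$ for the compact-dual case) rather than reprove it.

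With these facts the computation is short. Roots in $\Sigma_K$ are orthogonal to $\lambda$, and $|\Psi|=2n$. The defining property of $\<\cdot,\cdot\>_G$ then gives
\[
\|\lambda\|_G^2=\sum_{\a\in\Sigma}\<\lambda,\a\>_G^2=2na^2 .
\]
Since $|\Psi^+|=n$, we also get
\[
\<\lambda,\rho-\rho_K\>_G=\tfrac12\sum_{\a\in\Psi^+}\<\lambda,\a\>_G=\tfrac{n}{2}a .
\]
Comparing with $\<\lambda,\rho-\rho_K\>_G=c_\lambda\|\lambda\|_G^2$ yields
\[
c_\lambda=\frac{na/2}{2na^2}=\frac1{4a}=\frac1{4\<\a_0,\lambda\>_G}.
\]
The remaining expressions in (\ref{VglSymm11}) follow by substitution: $\sqrt{n/8}/\|\lambda\|_G=\sqrt{n/8}/(\sqrt{2n}\,a)=1/(4a)$ and $n/(8\cdot na/2)=1/(4a)$.

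For the norm identity, I expand
\[
\|\rho\|_G^2=\|\rho_K\|_G^2+2\<\rho_K,\rho-\rho_K\>_G+\|\rho-\rho_K\|_G^2 .
\]
The cross term vanishes because $\rho_K$ is a combination of roots of $K$, which are orthogonal to $\lambda$. The last term is $c_\lambda^2\|\lambda\|_G^2=\frac{1}{16a^2}\cdot 2na^2=\frac n8$, which gives $\|\rho\|_G^2=\|\rho_K\|_G^2+\frac n8$.
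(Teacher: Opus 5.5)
Your proposal is correct and follows essentially the same route as the paper. Both arguments run as follows: $W_K$-invariance of $\rho-\rho_K$ together with the one-dimensional center of $K$ gives $\rho-\rho_K=c_\lambda\lambda$; the cominuscule/grading property of Hermitian symmetric spaces makes $\<\a,\lambda\>_G$ constant on the $n$ positive noncompact roots; and then $\|\lambda\|_G^2=2n\<\a_0,\lambda\>_G^2$ and $\<\lambda,\rho-\rho_K\>_G=\frac n2\<\a_0,\lambda\>_G$ yield all three expressions for $c_\lambda$ and the identity $\|\rho\|_G^2=\|\rho_K\|_G^2+\frac n8$. You also use implicitly, as the paper does, that for $G$ simple $\<\cdot,\cdot\>$ is proportional to $\<\cdot,\cdot\>_G$, so the orthogonality hypotheses carry over between the two scalar products.
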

\begin{proof}
For any $\gamma\in\Sigma^+_K$, $\sum_{\a\in\Psi}\<\gamma^\vee,\alpha\>=2\<\gamma^\vee,\rho-\rho_K\>=0$, as $s_\gamma\gamma=-\gamma$ and $s_\gamma\in W_K$ permutes $\Psi$. As $K$ equals $S^1$ times a semi-simple Lie group, this implies that $\exists c_\lambda\in\BR:\rho-\rho_K=c_\lambda\lambda$.
The weight $\lambda$ is a multiple of a cominuscule weight $\lambda_0$ in the sense that $\lambda_0^\vee$ is minuscule with respect to $(\Sigma^+)^\vee$ (\cite[Th. 1.25]{Mi}). Thus $\forall\a\in\Psi:\<\lambda,\a\>>0$ implies that $\<\lambda,\a\>$ is independent of $\a\in\Psi$ by \cite[Prop.\ VIII.3.6(iii)]{Bou7-9}. Hence for any $\alpha_0\in\Psi$ we find that $2c_\lambda\|\lambda\|^2=2\<\rho-\rho_K,\lambda\>=\sum_{\alpha\in\Psi}\<\alpha,\lambda\>=n\<\alpha_0,\lambda\>$ and $\frac12\|\lambda\|_G^2=\sum_{\gamma\in\Sigma^+}\<\gamma,\lambda\>_G^2=n\<\alpha_0,\lambda\>_G^2$. This implies Eq.\ (\ref{VglSymm11}).
Furthermore one finds $\|\rho\|_G^2=\|\rho_K\|_G^2+c_\lambda^2\|\lambda\|^2=\|\rho_K\|_G^2+\frac{n}8$.
\end{proof}
In particular $\<\alpha_0,\rho-\rho_K\>_G=c_\lambda\<\alpha_0,\lambda\>_G=\frac14$.
\begin{prop}\label{VglSymm2}
Under the assumptions of Prop.\ \ref{VglSymm1} let $\<x,y\>_K$ denote the canonical scalar product associated to $\Sigma_K$ on a $(\dim\frak t-1)$-dimensional subtorus $\frak t_K^*$. Then 
$$
\frac{\<\cdot,\cdot\>_{G|\frak t_K^*}}{\<\cdot,\cdot\>_K}=\frac{3\dim K-\dim G}{2(\dim K-1)}.
$$
\end{prop}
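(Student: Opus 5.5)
We have to show that the ratio of the restriction of $\<\cdot,\cdot\>_G$ to $\frak t_K^*$ and $\<\cdot,\cdot\>_K$ equals $\frac{3\dim K-\dim G}{2(\dim K-1)}$. The plan is to compute this ratio as a single scalar, by evaluating both scalar products on one well-chosen vector and checking that their quotient is constant.

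\textbf{Step 1: reduce to Killing forms.} Here $\frak t_K^*$ is the orthogonal complement of $\lambda$, i.e. the span of $\Sigma_K$. Both $\<\cdot,\cdot\>_G$ and $\<\cdot,\cdot\>_K$ are induced by negative Killing forms, of $\frak g$ and of $[\frak k,\frak k]$ respectively. For $X\in\frak k_{ss}$ the Killing form of $\frak g$ decomposes as
\[
B_G(X,X)=B_K(X,X)+\Tr(\ad X|_{\frak p})^2 .
\]
If $[\frak k,\frak k]$ is simple, $\frak p$ is an irreducible $K$-module and the restriction is a scalar $c$ times $B_K$ by Schur's Lemma. The dual scalar products then satisfy $\<\cdot,\cdot\>_{G|\frak t_K^*}=c^{-1}\<\cdot,\cdot\>_K$. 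If $[\frak k,\frak k]$ has several simple factors, as for Grassmannians, one must check that $c$ agrees on each factor. This would follow from the trace computation below applied factorwise, or from the classification.

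\textbf{Step 2: compute $c$ by traces.} I would take traces over an orthonormal basis of $\frak k_{ss}$ with respect to $-B_K$. By the Casimir identity, $\sum_i B_K(X_i,X_i)=-\dim K_{ss}=-(\dim K-1)$, since $K=S^1\times K_{ss}$. For the full $\frak g$, the Casimir of $\frak g$ acts on $\frak g$ by $1$ in the normalisation of $-B_G$. Splitting $\frak g=\frak z\oplus\frak k_{ss}\oplus\frak p$ and using $\sum_\a\<\a,\a\>_G$-type identities via $\<x,y\>_G=\sum_{\a\in\Sigma}\<x,\a\>_G\<\a,y\>_G$, one can express $\Tr_{\frak p}$ of the $K_{ss}$-Casimir.

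Equivalently, and more concretely in root language, I would apply the defining identity $\<x,y\>_G=\sum_{\a\in\Sigma_G}\<x,\a\>_G\<\a,y\>_G$ for $x,y\in\frak t_K^*$. I would split the sum into $\Sigma_K$ and $\Psi$:
\[
\<x,y\>_G=Q_K(x,y)+Q_\Psi(x,y).
\]
Here $Q_K$, the sum over $\Sigma_K$, equals $c^2$-rescaled $\<\cdot,\cdot\>_K$. Concretely, if $\<\cdot,\cdot\>_G=c\<\cdot,\cdot\>_K$ on $\frak t_K^*$, then $Q_K=c^2\<\cdot,\cdot\>_K=c\<\cdot,\cdot\>_G$. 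This yields $Q_\Psi=(1-c)\<\cdot,\cdot\>_G$.

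\textbf{Step 3: fix $c$ by taking traces.} I would take traces of these forms on $\frak t^*$ with respect to $\<\cdot,\cdot\>_G$. The trace of $Q_\Psi$ on all of $\frak t^*$ is $\sum_{\a\in\Psi}\|\a\|_G^2$. Its $\lambda$-direction part equals $\sum_{\a\in\Psi}\<\a,\lambda\>_G^2/\|\lambda\|_G^2=\frac12$ by the proof of Prop. \ref{VglSymm1}, which gives $\frac12\|\lambda\|^2_G=\sum_{\Psi^+}\<\a,\lambda\>^2$ over the positive half.

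I would also use $\sum_{\a\in\Sigma}\|\a\|_G^2=\dim\frak t$, from the trace of the defining identity, and $\sum_{\a\in\Sigma_K}\|\a\|_G^2=c\,\sum_{\a\in\Sigma_K}\|\a\|_K^2=c(\dim\frak t-1)$. Together these give $\sum_\Psi\|\a\|_G^2=\dim\frak t-c(\dim\frak t-1)$. On the other hand this must equal $\frac12+(1-c)(\dim\frak t-1)$, which is consistent but not determining. So ranks alone do not suffice, and I need a dimension count.

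For this I would use Freudenthal--de Vries' strange formula $\|\rho\|_G^2=\dim G/24$, applied to both $G$ and $K_{ss}$, the latter giving $\|\rho_K\|_K^2=(\dim K-1)/24$. With Prop. \ref{VglSymm1}, $\|\rho\|_G^2=\|\rho_K\|_G^2+n/8$, and $\rho_K\in\frak t_K^*$, this gives
\[
\frac{\dim G}{24}=c^{-1}\,\frac{\dim K-1}{24}+\frac{n}{8}.
\]
Here the restriction ratio $\<\cdot,\cdot\>_G/\<\cdot,\cdot\>_K$ equals $c^{-1}$ in the appropriate convention, and $2n=\dim G-\dim K$. Solving gives the ratio as $(\dim K-1)/(\dim G-3n)$. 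Since $\dim G-3n=\dim G-\frac32(\dim G-\dim K)=\frac{3\dim K-\dim G}{2}$, this inverts to the stated formula in the right convention. I would double-check the direction of the ratio carefully, because inverse versus direct is the typical pitfall here.

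\textbf{Main obstacle.} The main obstacle is Step 1 when $K_{ss}$ is not simple. The strange formula applied to $K_{ss}$ sums over its factors, so equal proportionality on each factor is required. I would establish this first by showing that $\frak p\otimes\BC=\frak p^{1,0}\oplus\frak p^{0,1}$ is a tensor product of minuscule representations of the factors, and computing the Casimir eigenvalue factorwise. If that fails, I would fall back on a case check of the classification of Hermitian symmetric spaces.
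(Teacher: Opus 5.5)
Your decisive step is exactly the paper's proof: the strange formula for $G$ and $K_{ss}$, the identity $\|\rho\|_G^2=\|\rho_K\|_G^2+\frac n8$ from Prop.~\ref{VglSymm1}, and $2n=\dim G-\dim K$. Steps 1--2 and the rank count are not needed. The rank count also contains a slip. The defining identity runs over all of $\Sigma$, so $\sum_{\alpha\in\Psi}\langle\alpha,\lambda\rangle_G^2=\|\lambda\|_G^2$, and the $\lambda$-direction contributes $1$, not $\frac12$. With $\frac12$, your two expressions for $\sum_{\alpha\in\Psi}\|\alpha\|_G^2$ would contradict each other rather than agree.

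More importantly, your final computation is wrong as written, and there is no freedom of convention to appeal to. Let $r$ be the ratio in the statement, which is the $c$ of your Step 2. Then $\|\rho_K\|_G^2=r\|\rho_K\|_K^2=r\frac{\dim K-1}{24}$, hence
\[
\frac{\dim G}{24}=r\,\frac{\dim K-1}{24}+\frac n8,\qquad r=\frac{\dim G-3n}{\dim K-1}=\frac{3\dim K-\dim G}{2(\dim K-1)}
\]
directly. The $c^{-1}$ in your display comes from switching between the constant of Step 1 (a ratio of Killing forms on $\mathfrak{k}_{ss}$) and that of Step 2 (the dual ratio on $\mathfrak{t}_K^*$). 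Your value $(\dim K-1)/(\dim G-3n)$ is the reciprocal of the correct one, and ``inverts to the stated formula in the right convention'' is not a proof step.

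The obstacle you flag is real, and the paper's proof shares it. The paper obtains the constant from $W_K$-invariance, which forces proportionality only when $W_K$ acts irreducibly on $\mathfrak{t}_K^*$, i.e.\ when $K_{ss}$ is simple. However, your proposed remedy cannot succeed, because equal proportionality on the simple factors is false.

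Take $SU(p+q)/S(U(p)\times U(q))$. There $B_{\mathfrak{su}(N)}=2N\,\mathrm{tr}$ and $B_{\mathfrak{su}(p)}=2p\,\mathrm{tr}$. So the ratio is $\frac{p}{p+q}$ on the $\mathfrak{su}(p)$-part of $\mathfrak{t}_K^*$ and $\frac{q}{p+q}$ on the $\mathfrak{su}(q)$-part. For $G(2,5)$ these are $\frac25$ and $\frac35$, while the formula gives
\[
\frac6{11}=\frac{3\cdot\frac25+8\cdot\frac35}{11},
\]
the average weighted by $\dim\mathfrak{su}(2)=3$ and $\dim\mathfrak{su}(3)=8$. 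That weighted average is all that $\|\rho_K\|^2$, and hence the strange formula, can detect. Your factorwise Casimir computation or a case check would exhibit exactly this.

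So in general the argument proves only $\|\rho_K\|_G^2/\|\rho_K\|_K^2=\frac{3\dim K-\dim G}{2(\dim K-1)}$. The statement with a single constant needs the extra hypothesis that $\langle\cdot,\cdot\rangle_{G|\mathfrak{t}_K^*}$ is proportional to $\langle\cdot,\cdot\rangle_K$. This holds automatically if $K_{ss}$ is simple and also for $G(p,2p)$, but fails for $G(p,p+q)$ with $p\ne q$, $p,q\ge2$. Under that hypothesis, your Step 3 with the corrected direction is a complete proof.
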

\begin{proof}
As any $W_G$-invariant scalar product is $W_K$-invariant there exists $c>0$ such that $c{\|\cdot\|_K^2}={\|\cdot\|_G^2}$. Now Freudenthal--de Vries' Strange Formula (\cite[eq. 47.11]{FrdV}) states that $\dim G=24\|\rho_G\|_G^2$ and $\dim K-1=24\|\rho_K\|_K^2$. On the other hand Prop.\ \ref{VglSymm1} shows $\dim K-1=\frac{24}c(\|\rho_G\|_G^2-\frac{n}8)$. Thus $c=\frac{3\dim K-\dim G}{2(\dim K-1)}$.
\end{proof}

\begin{theor}\label{BVFinski}
The leading terms in Theorem \ref{ErsteTerme} equal the terms given by \cite[Th. 8]{BV} and \cite[Th. 1.3]{Fi}. Namely
\begin{eqnarray}\label{BVF1}
T(M,\mtr L^\ell)&=&\int_M\frac{\ell^n}{n!}c_1(L)^n(\frac{n}2\log\frac{\ell}{2\pi}+\frac12\log \mathring \Omega^L)
+o(\ell^{n})
\end{eqnarray}
where $\mathring \Omega^L$ is defined via $\omega^{TM}(\mathring \Omega^L\cdot,\cdot)=\Omega^L$. Furthermore in the case $X_0:=\frac\lambda{2\pi}$ and thus $x_j\equiv 2\pi$, as considered in \cite[Th. 1.3]{Fi},
\begin{eqnarray}\label{BVF2}\nonumber
T(M,\mtr L^\ell)&=&\int_M\Bigg(\frac{\ell^n}{n!}c_1(L)^n\cdot\frac{n}2\log\ell
\\&&\nonumber
+\frac{\ell^{n-1}}{(n-1)!}c_1(L)^{n-1}c_1(TM)\Big(\frac{3n+1}{12}\log\ell
+\frac{24\zeta'(-1)+2\log(2\pi)+7}{24}\Big)\Bigg)
\\&&+o(\ell^{n-1}).
\end{eqnarray}
\end{theor}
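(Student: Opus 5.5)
We specialise Theorem \ref{ErsteTerme} to $t=e_G$, so that $M^t=M$, $\ch_t=\ch$, $\Td_t=\Td$, and read off the coefficients of $\ell^n\log\ell$, $\ell^n$, $\ell^{n-1}\log\ell$ and $\ell^{n-1}$ by integrating over $M$. For the $\log\ell$-terms we use the identity $\frac{\Td'}{\Td}(L)=1+\bz\ch(L^{\otimes k})$ from (\ref{zetaTd'}) for each Chern root $x_i$ of $TM$. This gives
\[
\frac{\Td'}{\Td}-n=\sum_i\Big(\frac1{x_i}-\frac1{1-e^{-x_i}}+1\Big)-n=\frac n2-\sum_i\frac{x_i}{12}+O(x^3).
\]
Multiplying by $\Td(TM)=1+\frac{c_1}2+\dots$ and $e^{\ell c_1(L)}$, the coefficient of $\ell^n\log\ell$ is $\frac n2\int\frac{c_1(L)^n}{n!}$. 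The coefficient of $\ell^{n-1}\log\ell$ is $(\frac n4-\frac1{12})\int c_1(TM)\frac{c_1(L)^{n-1}}{(n-1)!}$. The summand $-\log\ell\cdot\bz(\dots)$ carries an overall minus sign, which has to be tracked together with the extra $\log\ell$ contributions arising from the $\log^\ddagger$ sum; taken together these should produce $\frac{3n+1}{12}$. I would check the sign on $\BP^1$ first.

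Next come the non-logarithmic leading terms. The $\log x_j$-sum contributes $-\sum_j\log x_j\,\frac{\rk E_j}{2}\int\frac{\ell^n c_1(L)^n}{n!}$ at top order. For (\ref{BVF1}) we identify $\sum_j \rk E_j\log x_j=\log\det\mathring\Omega^L-n\log 2\pi$. This holds because $\mathring\Omega^L$ acts on the root space of $\a$ by $\frac{\<\a^\vee,\lambda\>}{\a^\vee(X_0)}$, up to the $2\pi$ normalisation of the Kähler form. This yields $\frac12\log\mathring\Omega^L-\frac n2\log2\pi$, and homogeneity makes the integrand constant. The $G_t$-term and the $\log^\ddagger$-terms with $m\ge n$ give nothing at order $\ell^n$, since the $\log^\ddagger$ sum starts at $j\geq1$ and lowers the $\ell$-degree.

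For (\ref{BVF2}) with $x_j\equiv2\pi$, the $\log x_j$-term reduces to $-\log 2\pi\int\Td(\frac{\Td'}{\Td}-n)\ch(L^\ell)$, so its $\ell^{n-1}$ part is $-\log2\pi\,(\frac n4-\frac1{12})\int c_1(TM)\frac{c_1(L)^{n-1}}{(n-1)!}$. The $\bzs$-term $\int\Td\, G(TM)\ch(L^\ell)$ contributes at $\ell^{n-1}$ the quantity $\frac{\partial\Phi}{\partial s}(1,-1,1)\int c_1(TM)\cdots=\zeta'(-1)\int c_1(TM)\frac{c_1(L)^{n-1}}{(n-1)!}$. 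Its degree-zero part $n\zeta'(0)=-\frac n2\log2\pi$ combines with the previous $\log 2\pi$ pieces, and I expect all $n\log2\pi$ parts to cancel, leaving $\frac{2\log 2\pi}{24}$.

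The remaining term, which I expect to be the main obstacle, is the $m=n-1$ summand of the $\log^\ddagger$ sum with $j=1$. It equals
\[
\ell^{n-1}\int\frac{c_1(L)^n}{n!}\,\bz\sum_{\a\in\Psi}\frac{\<\a^\vee,\rho\>+k}{\<\a^\vee,\lambda\>}\,\ch(E_{\rho_K+k\a})^{[0]}.
\]
After evaluating $\bz$ (using $\zeta(0)=-\frac12$ and $\zeta(-1)=-\frac1{12}$), this must be rewritten as a multiple of $\int c_1(TM)c_1(L)^{n-1}$, and it is here that the $\frac7{24}$ must appear. To do this I would express $\int c_1(L)^n$ and $\int c_1(TM)c_1(L)^{n-1}$ through the Weyl dimension polynomial, via $\chi_{\rho+\ell\lambda}(e)=\prod_{\beta}\frac{\<\beta^\vee,\rho+\ell\lambda\>}{\<\beta^\vee,\rho\>}$. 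Its top two coefficients give $\int c_1(L)^n/n!$, and the ratio of the two integrals equals $2\<\lambda,\rho-\rho_K\>$-type sums. The constant $\<\a^\vee,\lambda\>$ is constant on $\Psi$ by Prop.\ \ref{VglSymm1}, so it factors out. The term in $k\,\ch(E_{\rho_K+k\a})$ is then handled with Prop.\ \ref{LinWeyldim} applied to the virtual $K$-representation with weights $\Psi$, which produces the dimension weighted by $\<\a,\rho\>$. Prop.\ \ref{VglSymm1} supplies $\<\a_0,\rho-\rho_K\>_G=\frac14$, and Prop.\ \ref{VglSymm2} converts Killing norms of $K$ to those of $G$. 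These two results should reduce everything to the strange formula $\dim G=24\|\rho\|^2_G$. Finally, I would reduce to simple $G$, since the torsion is additive over products, and confirm the resulting constant on $\BP^1$, where the coefficient of $\ell^{n-1}$ must be $\frac{24\zeta'(-1)+2\log2\pi+7}{24}$.
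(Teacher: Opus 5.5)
Your skeleton is the paper's: specialise Th.~\ref{ErsteTerme} to $t=e_G$, expand $\Td$, $\frac{\Td'}{\Td}$, $G$ and $\ch(L^\ell)$, reduce to simple $G$, and treat the $m=n-1$ summand $T_4$ with the Weyl dimension formula and Props.~\ref{LinWeyldim}--\ref{VglSymm2}. However, your bookkeeping of the first three summands is wrong, and you patch it with terms that do not exist.

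\emph{The sign in $\frac{\Td'}{\Td}-n$.} Per Chern root, $\frac1x-\frac1{1-e^{-x}}+1=\frac12-\frac x{12}+O(x^3)$. Hence $\frac{\Td'}{\Td}(TM)-n=-\frac n2-\frac{c_1(TM)}{12}+\dots$, not $+\frac n2-\dots$. With the correct sign, the single summand $-\log\ell\int_M\Td(TM)\big(\frac{\Td'}{\Td}(TM)-n\big)\ch(L^\ell)$ gives $\frac n2$ at order $\ell^n\log\ell$. It also gives exactly $\frac n4+\frac1{12}=\frac{3n+1}{12}$ at order $\ell^{n-1}\log\ell$.

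\emph{No $\log\ell$ in the $\log^\ddagger$-terms.} These summands are Laurent polynomials in $\ell$ and contain no $\log\ell$. Every $\log\ell$ comes from splitting $\log\frac{\langle\alpha^\vee,\ell\lambda\rangle}{\alpha^\vee(X_0)}$. So the extra $\log\ell$ contributions you count on do not exist, and your expansion cannot produce both $\frac n2$ and $\frac{3n+1}{12}$.

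\emph{The $\log x_j$-terms.} The same slip flips their signs. The correct contributions are $+\sum_j\frac{\rk E_j}2\log x_j$ at order $\ell^n$ and $+\sum_j\big(\frac{\rk E_j}4c_1(TM)+\frac{c_1(E_j)}{12}\big)\log x_j$ at order $\ell^{n-1}$. Only with these signs do the $n\log 2\pi$ pieces cancel to $\frac{2\log2\pi}{24}$.

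\emph{The $G$-term at order $\ell^n$.} This term does not vanish there: it equals $n\zeta'(0)\int_M\frac{\ell^nc_1(L)^n}{n!}=-\frac n2\log(2\pi)\int_M\frac{\ell^nc_1(L)^n}{n!}$. That is the source of $\log\frac{\ell}{2\pi}$ in (\ref{BVF1}). Your replacement $\sum_j\rk E_j\log x_j=\log\det\mathring\Omega^L-n\log2\pi$ contradicts the statement itself. At $x_j\equiv2\pi$, (\ref{BVF1}) would then have leading term $\frac n2\log(2\pi\ell)$, while (\ref{BVF2}) has $\frac n2\log\ell$. The correct identification is $\frac12\log\mathring\Omega^L=\sum_j\frac{\rk E_j}2\log x_j$. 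Indeed $\mathring\Omega^L={\rm id}$ for $X_0=\lambda$, and rescaling $X_0$ on $E_j$ rescales $\mathring\Omega^L$ there by the same factor.

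\emph{The $\frac7{24}$ step.} First, $T_4=-\frac{\ell^{n-1}}{n!}\,\bz\big(\sum_{\alpha\in\Psi}\dim V^K_{\rho_K+k\alpha}\frac{\langle\alpha^\vee,\rho\rangle+k}{\langle\alpha^\vee,\lambda\rangle}\big)\int_Mc_1(L)^n$ carries a minus sign. More seriously, Prop.~\ref{VglSymm1} makes $\langle\alpha,\lambda\rangle$ constant on $\Psi$, not $\langle\alpha^\vee,\lambda\rangle$. For the odd quadric $SO(2n+1)/SO(2)\times SO(2n-1)$ with $\lambda=e_1$, one has $\langle(e_1\pm e_j)^\vee,\lambda\rangle=1$ but $\langle e_1^\vee,\lambda\rangle=2$; $Sp(n)/U(n)$ behaves similarly. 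Since $\frac1{\langle\alpha^\vee,\lambda\rangle}=\frac{\|\alpha\|^2}{2\langle\alpha,\lambda\rangle}$, the $\zeta(-1)$-contribution of the $k$-term is proportional to $\sum_{\alpha\in\Psi}\|\alpha\|_G^2$. The strange formula alone does not evaluate this sum. The paper uses Brown's formula $2\sum_{\alpha\in\Sigma^+}\|\alpha\|_G^2=\dim\mathfrak t$ and its $K$-analogue, together with Prop.~\ref{VglSymm2}. It also uses $4\sum_{\alpha\in\Sigma^+,\beta\in\Sigma^+_K}\langle\alpha,\rho_K\rangle_G\frac{\langle\beta^\vee,\alpha\rangle_G}{\langle\beta^\vee,\rho_K\rangle_G}=\#\Sigma_K$, which follows from the definition of $\langle\cdot,\cdot\rangle_G$.

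\emph{Degree in $k$.} Before evaluating $\bz$ with only $\zeta(0)$ and $\zeta(-1)$, you must show that the polynomial in $k$ has degree at most $1$. The paper splits $\langle\alpha^\vee,\rho\rangle=\langle\alpha^\vee,\rho-\rho_K\rangle+\langle\alpha^\vee,\rho_K\rangle$ and handles the pieces as follows:
\begin{itemize}
\item $\sum_{\alpha\in\Psi}\dim V^K_{\rho_K+k\alpha}=\dim\psi^k{\rm Ad}^{1,0}_{G/K}=n$;
\item Prop.~\ref{LinWeyldim} gives linearity of the $\rho_K$-weighted part;
\item Adams operations on $W_K$-invariant subsets of $\Psi$ show that $\sum_\alpha\dim V^K_{\rho_K+k\alpha}/\langle\alpha^\vee,\lambda\rangle$ is independent of $k$. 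This, not Prop.~\ref{LinWeyldim}, handles the term with $k$.
\end{itemize}
A check on $\BP^1$ cannot replace this general computation.
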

\begin{proof}
By the formula for the analytic torsion of products we can assume that $G$ is simple and hence $K$ equals a product of $S^1$ and a semi-simple group (\cite[Prop.\ VIII.6.2]{He}).
We shall use the expansions
\begin{eqnarray*}
\Td(TM)&=&1+\frac{c_1(TM)}2+{\rm higher\ order},
\\ \frac{\Td'(E)}{\Td(E)}-\rk E&=&-\frac{\rk E}2-\frac{c_1(E)}{12}+{\rm higher\ order},
\\ G(TM)&=&\zeta'(0)\rk TM+c_1(TM)\zeta'(-1)+{\rm higher\ order}
\\&=&-\frac{n}2\log(2\pi)+c_1(TM)\zeta'(-1)+{\rm higher\ order},
\\ \ch(L^\ell)&=&\frac{\ell^n}{n!}c_1(L)^n+\frac{\ell^{n-1}}{(n-1)!}c_1(L)^{n-1}+{\rm lower\ order}.
\end{eqnarray*}
Let $T_4$ denote the coefficient of $\ell^{n-1}$ in the summand for $m=n-1$ in Theorem \ref{ErsteTerme}. One finds for the first three summands
\begin{eqnarray}\label{ErsteNichtaeq}
T(G/K,\mtr L^\ell_{\rho_K+\ell\lambda})&=&
\int_M\Bigg(\frac{\ell^n}{n!}c_1(L)^n(\frac{n}2\log\ell-\frac{n}2\log(2\pi)+\sum_{j\in J}\frac{\rk E_j}2\log x_j)
\nonumber\\&&+\frac{\ell^{n-1}}{(n-1)!}c_1(L)^{n-1}\Big(
\frac{c_1(TM)}2(\frac{n}2\log\ell-\frac{n}2\log(2\pi)+\sum_{j\in J}\frac{\rk E_j}2\log x_j)
\nonumber\\&&+(\frac{c_1(TM)}{12}\log\ell+c_1(TM)\zeta'(-1)+\sum_{j\in J}\frac{c_1(E_j)}{12}\log x_j)
\Big)\Bigg)+T_4+o(\ell^{n-1})
\nonumber\\&=&
\int_M\Bigg(\frac{\ell^n}{n!}c_1(L)^n(\frac{n}2\log\frac{\ell}{2\pi}+\sum_{j\in J}\frac{\rk E_j}2\log x_j)
\nonumber\\&&+\frac{\ell^{n-1}}{(n-1)!}c_1(L)^{n-1}\Big(c_1(TM)
(\frac{3n+1}{12}\log\ell-\frac{n}4\log(2\pi)
\nonumber\\&&+\sum_{j\in J}\frac{\rk E_j}4\log x_j
+\zeta'(-1))
+\sum_{j\in J}\frac{c_1(E_j)}{12}\log x_j
\Big)\Bigg)+T_4+o(\ell^{n-1}).
\end{eqnarray}
The curvature term $\mathring \Omega^L$ is a constant times the identity on the irreducible homogeneous space $M$ because of the equivariance. For the choice $X_0=\lambda$ one finds $\frac12\sum_{\a\in\Psi}\log\frac{\langle\a^\vee,\lambda\rangle}{\a^\vee(X_0)}=0$ and $\frac12\log \mathring \Omega^L=0$, as in this case $\omega^{TM}=\Omega^L$. Rescaling $X_0$ on a virtual subrepresentation $E_j$ of $TM$ scales $\mathring \Omega^L$ by the same factor, and thus
$$
\frac12\log \mathring \Omega^L=\frac12\sum_{\a\in\Psi}\log\frac{\langle\a^\vee,\lambda\rangle}{\a^\vee(X_0)}=
\sum_{j\in J}\frac{\rk E_j}2\log x_j.
$$
Thus Eq.\ (\ref{ErsteNichtaeq}) implies Eq.\ (\ref{BVF1}) for symmetric spaces:
$$
\int_Mc_1(L)^n\sum_{j\in J}\frac{\rk E_j}2\log x_j=\int_Mc_1(L)^n\frac12\log \mathring \Omega^L.
$$
Also Eq.\ (\ref{ErsteNichtaeq}) contains the same multiples of $\log\ell$ and $\zeta'(-1)$ as Eq.\ (\ref{BVF2}). Furthermore in the case $x_j\equiv 2\pi$ one finds
\begin{eqnarray*}
&&\int_Mc_1(L)^{n-1}c_1(TM)\frac{2\log(2\pi)}{24}
\\&=&
\int_Mc_1(L)^{n-1}\Big(c_1(TM)
(-\frac{n}4\log(2\pi)
+\sum_{j\in J}\frac{\rk E_j}4\log x_j)
+\sum_{j\in J}\frac{c_1(E_j)}{12}\log x_j
\Big).
\end{eqnarray*}
Finally we show that the term
\begin{eqnarray}\label{mein724}
\nonumber
T_4&=&-\ell^{n-1}\int_{M}\Td(TM)\bz\left(\sum_{\a\in\Psi}\ch(E_{\rho_K+k\alpha})
\frac{c_1(L_{\rho_K+\lambda})^n}{n!}\frac{\langle\a^\vee,\rho\rangle+k}{\langle\a^\vee,\lambda\rangle}
\right)
\nonumber
\\&=&-\frac{\ell^{n-1}}{n!}\bz\left(\sum_{\a\in\Psi}\dim(V^K_{\rho_K+k\alpha})
\frac{\langle\a^\vee,\rho-\rho_K\rangle+\langle\a^\vee,\rho_K\rangle+k}{\langle\a^\vee,\lambda\rangle}\right)\int_{M}c_1(L_{\rho_K+\lambda})^n
\nonumber
\\&=:&T_{41}+T_{42}+T_{43}
\end{eqnarray}
equals $T_F:=\frac{7\ell^{n-1}}{24(n-1)!}\int_Mc_1(L)^{n-1}c_1(TM)$.
First we relate $T_F$ to $\int_Mc_1(L)^n$. The Theorem of Hirzebruch--Riemann--Roch shows that the term of degree $\ell^{n-1}$ in $\sum_{\a\in\Psi}\dim V^G_{\rho+\ell\lambda+k\a}$ is given by
\begin{eqnarray}\nonumber\label{RRln-1}
&&\sum_{\a\in\Psi}(\dim V^G_{\rho+\ell\lambda+k\a})^{[\deg_\ell=n-1]}
=\left(\int_M\Td(TM)\ch(L_{\rho_K+\lambda})^\ell\ch(\psi^kTM)\right)^{[\deg_\ell=n-1]}
\nonumber
\\&=&\int_M(1+\frac12c_1(TM))\frac{\ell^{n-1}c_1(L_{\rho_K+\lambda})^{n-1}}{(n-1)!}(n+kc_1(TM))
\nonumber
\\&=&(n/2+k)\frac{\ell^{n-1}}{(n-1)!}\int_Mc_1(TM)c_1(L_{\rho_K+\lambda})^{n-1}
=\frac{12(n+2k)}7T_F.
\end{eqnarray}
On the other hand the Weyl dimension formula shows
\begin{eqnarray}\label{Vergleich230}
\sum_{\a\in\Psi}(\dim V^G_{\rho+\ell\lambda+k\a})^{[\deg_\ell=n-1]}
&=&\sum_{\a\in\Psi}\left(\prod_{\beta\in\Psi}\frac{\<\beta^\vee,\rho+\ell\lambda+k\a\>}{\<\beta^\vee,\rho\>}\cdot\prod_{\beta\in\Sigma^+_K}\frac{\<\beta^\vee,\rho+k\a\>}{\<\beta^\vee,\rho\>}\right)^{[\deg_\ell=n-1]}
\nonumber
\\&=&\sum_{\a,\gamma\in\Psi}\frac{\<\gamma^\vee,\rho+k\a\>}{\<\gamma^\vee,\ell\lambda\>}
\prod_{\beta\in\Psi}\frac{\<\beta^\vee,\ell\lambda\>}{\<\beta^\vee,\rho\>}\cdot\prod_{\beta\in\Sigma^+_K}\frac{\<\beta^\vee,\rho+k\a\>}{\<\beta^\vee,\rho\>}.
\end{eqnarray}
By Eq.\ (\ref{RRln-1}) this term is affine linear in $k$ and thus it is equal to
\begin{eqnarray}
\sum_{\a\in\Psi}(\dim V^G_{\rho+\ell\lambda+k\a})^{[\deg_\ell=n-1]}
&=&\sum_{\a\in\Psi}\prod_{\beta\in\Psi}\frac{\<\beta^\vee,\ell\lambda\>}{\<\beta^\vee,\rho\>}\cdot\Bigg(\sum_{\gamma\in\Psi}\frac{\<\gamma^\vee,\rho+k\a\>}{\<\gamma^\vee,\ell\lambda\>}
\cdot\prod_{\tilde\beta\in\Sigma^+_K}\frac{\<\tilde\beta^\vee,\rho\>}{\<\tilde\beta^\vee,\rho\>}
\nonumber
\\&&\label{Vergleich23}
+\sum_{\tilde\gamma\in\Psi}\frac{\<\tilde\gamma^\vee,\rho\>}{\<\tilde\gamma^\vee,\ell\lambda\>}
\cdot
\sum_{\gamma\in\Sigma^+_K}\frac{\<\gamma^\vee,k\a\>}{\<\gamma^\vee,\rho\>}\prod_{\beta\in\Sigma^+_K}\frac{\<\beta^\vee,\rho\>}{\<\beta^\vee,\rho\>}\Bigg).
\end{eqnarray}
The relation $\forall\gamma\in\Sigma_K^+:\gamma\perp\rho-\rho_K$ from the proof of Prop.\ \ref{VglSymm1} shows that the second summand in Eq.\ (\ref{Vergleich23}) vanishes and
\begin{eqnarray}\label{Vergleich232}
\sum_{\a\in\Psi}(\dim V^G_{\rho+\ell\lambda+k\a})^{[\deg_\ell=n-1]}
&=&\sum_{\a\in\Psi}\prod_{\beta\in\Psi}\frac{\<\beta^\vee,\ell\lambda\>}{\<\beta^\vee,\rho\>}\cdot\sum_{\gamma\in\Psi}\frac{\<\gamma^\vee,\rho+k\a\>}{\<\gamma^\vee,\ell\lambda\>}
\nonumber
\\&=&\prod_{\beta\in\Psi}\frac{\<\beta^\vee,\ell\lambda\>}{\<\beta^\vee,\rho\>}\cdot\sum_{\gamma\in\Psi}\frac{\<\gamma^\vee,n\rho+2k(\rho-\rho_K)\>}{\<\gamma^\vee,\ell\lambda\>}
.
\end{eqnarray}

Combining Eqs.\ (\ref{RRln-1}) and (\ref{Vergleich232}) implies that
\begin{equation}\label{Vergleich234}
T_F=
\frac{7n\ell^{n-1}c_\lambda}{12}\prod_{\beta\in\Psi}\frac{\<\beta^\vee,\lambda\>}{\<\beta^\vee,\rho\>}.
\end{equation}
By considering the coefficient of $\ell^n$ instead, one obtains $\frac{\ell^n}{n!}\int_Mc_1(L)^n=\prod_{\beta\in\Psi}\frac{\<\beta^\vee,\lambda\>}{\<\beta^\vee,\rho\>}$ the same way as Eq.\ (\ref{Vergleich234}).

The term $T_{41}$:
One finds
\begin{eqnarray*}
T_{41}&=&-\ell^{-1}c_\lambda\bz\left(\sum_{\a\in\Psi}\dim(V^K_{\rho_K+k\alpha})
\right)\prod_{\beta\in\Psi}\frac{\<\beta^\vee,\ell\lambda\>}{\<\beta^\vee,\rho\>}
\\&=&-\ell^{-1}c_\lambda\bz\left(\dim(\psi^k{\rm Ad}^{1,0}_{G/K})
\right)\prod_{\beta\in\Psi}\frac{\<\beta^\vee,\ell\lambda\>}{\<\beta^\vee,\rho\>}
=-\ell^{n-1}c_\lambda\zeta(0)n\prod_{\beta\in\Psi}\frac{\<\beta^\vee,\lambda\>}{\<\beta^\vee,\rho\>}.
\end{eqnarray*}
The term $T_{42}$:
By Proposition \ref{LinWeyldim} the term
$$
\sum_{\a\in\Psi}\dim(V^K_{\rho_K+k\alpha})
\frac{\langle\a^\vee,\rho_K\rangle}{\langle\a^\vee,\lambda\rangle}
=\sum_{\a\in\Psi}\frac{\langle\a^\vee,\rho_K\rangle}{\langle\a^\vee,\lambda\rangle}
\prod_{\beta\in\Sigma^+_K}\frac{\<\beta^\vee,\rho_K+k\alpha\>}{\<\beta^\vee,\rho_K\>}
$$
is linear in $k$ and thus given by
$k\sum_{\a\in\Psi}\frac{\langle\a^\vee,\rho_K\rangle}{\langle\a^\vee,\lambda\rangle}
\sum_{\beta\in\Sigma^+_K}\frac{\<\beta^\vee,\alpha\>}{\<\beta^\vee,\rho_K\>}.$ Therefore
$$
T_{42}=-\ell^{-1}\zeta(-1)\sum_{\a\in\Psi}\frac{\langle\a^\vee,\rho_K\rangle}{\langle\a^\vee,\lambda\rangle}
\sum_{\beta\in\Sigma^+_K}\frac{\<\beta^\vee,\alpha\>}{\<\beta^\vee,\rho_K\>}\cdot\prod_{\beta\in\Psi}\frac{\<\beta^\vee,\ell\lambda\>}{\<\beta^\vee,\rho\>}.
$$

The term $T_{43}$:
The set of weights $\Psi_j:=\{\a\in\Psi\,|\,\<\a,\lambda\>=j\}$ is $W_K$-invariant for any $j\in\BN$. Thus there is a virtual $K$-representation $F_j$ with weights $\Psi_j$, and
\begin{eqnarray*}
\sum_{\a\in\Psi}\frac{\dim V^K_{\rho_K+k\a}}{\<\a^\vee,\lambda\>}=\sum_{j\in\BN}\frac{\dim \psi^kF_j}{j}
\end{eqnarray*}
is independent of $k$. Hence
\begin{eqnarray*}
T_{43}&=&-\ell^{-1}\zeta(-1)\sum_{\a\in\Psi}\frac{1}{\langle\a^\vee,\lambda\rangle}
\prod_{\beta\in\Psi}\frac{\<\beta^\vee,\ell\lambda\>}{\<\beta^\vee,\rho\>}.
\end{eqnarray*}
Dividing $T_F$ and $T_{41}+T_{42}+T_{43}$ by $\frac{\ell^{n-1}c_\lambda}{12}\prod_{\beta\in\Psi}\frac{\<\beta^\vee,\lambda\>}{\<\beta^\vee,\rho\>}$, we see that to prove $T_F=T_4$ we need to show that
\begin{eqnarray*}
7n=
6 n
+\sum_{\a\in\Psi}\frac{\langle\a,\rho_K\rangle}{c_\lambda\<\a,\lambda\>}
\sum_{\beta\in\Sigma^+_K}\frac{\<\beta^\vee,\alpha\>}{\<\beta^\vee,\rho_K\>}
+\sum_{\a\in\Psi}\frac{\|\a\|^2}{2c_\lambda\<\a,\lambda\>}.
\end{eqnarray*}
So far, we have deliberately avoided arguments not applicable to general complex homogeneous spaces. We will now narrow our focus to symmetric spaces. We use $\forall\a\in\Psi:c_\lambda\<\a,\lambda\>_G\equiv\frac14$ as shown by Prop.\ \ref{VglSymm1} and we subtract $6n$ on both sides to simplify the last equation to
\begin{eqnarray}\label{Vergleich235}
n=
4\sum_{\a\in\Psi\atop\beta\in\Sigma^+_K}
\frac{\langle\a,\rho_K\rangle_G\<\beta^\vee,\alpha\>_G}{\<\beta^\vee,\rho_K\>_G}+2\sum_{\a\in\Psi}\|\a\|_G^2.
\end{eqnarray}
Gordon Brown's Formula (\cite{Brown}) shows that $2\sum_{\a\in\Sigma^+}\|\a\|_G^2=\dim\frak t$ and $2\sum_{\a\in\Sigma^+_K}\|\a\|_K^2=\dim\frak t-1$. Furthermore the definition of the canonical scalar product implies that
$$
4\sum_{\a\in\Sigma^+\atop\beta\in\Sigma^+_K}\langle\a,\rho_K\rangle_G
\frac{\<\beta^\vee,\alpha\>_G}{\<\beta^\vee,\rho_K\>_G}=2\sum_{\beta\in\Sigma^+_K}\frac{\<\beta^\vee,\rho_K\>_G}{\<\beta^\vee,\rho_K\>_G}=\#\Sigma_K
=4\sum_{\a\in\Sigma^+_K\atop\beta\in\Sigma^+_K}\langle\a,\rho_K\rangle_K
\frac{\<\beta^\vee,\alpha\>_K}{\<\beta^\vee,\rho_K\>_K}.
$$
Therefore when setting $c:=\frac{\<\cdot,\cdot\>_G}{\<\cdot,\cdot\>_K}$ we get by Prop.\ \ref{VglSymm2}
\begin{eqnarray*}
&&2\sum_{\a\in\Psi}\|\a\|^2_G
+4\sum_{\a\in\Psi\atop\beta\in\Sigma^+_K}\langle\a,\rho_K\rangle_G
\frac{\<\beta^\vee,\alpha\>_G}{\<\beta^\vee,\rho_K\>_G}
=(1-c)\dim\frak t+c+(1-c)\#\Sigma_K
\\&=&\frac{\dim G-\dim K-2}{2(\dim K-1)}(\dim{\frak t}-1+\#\Sigma_K)+1=n.
\end{eqnarray*}
Thus Eq.\ (\ref{Vergleich235}) holds and $T_4=T_F$.
\end{proof}

\section{Application to the Jantzen sum formula}
We give an application of the previous results to lattice representations of Chevalley group schemes by using the relation between analytic torsion and the Jantzen sum formula \cite[p.\ 311]{Jan}. For details on the objects investigated here we have to refer to \cite{Jan} and \cite{KK}.
Consider a semisimple Chevalley group scheme $\cal G$ over ${\rm Spec}\,\BZ$. Fix a maximal split torus ${\cal T}\subseteq \cal G$ with group of characters $X^*({\cal T})$ and an ordering. Let $\cal B$ be an associated Borel subgroup and set $X:={\cal G}/{\cal B}$.
Given two $\BZ$-free $\cal G$-modules $A$, $A'$ such that $A_\BQ$ and $A'_\BQ$ are isomorphic and irreducible, the index $[A,A']\in\BQ^+$ is obtained when
embedding $A'$ into $A\otimes\BQ$ such that the weight spaces of highest weight are identified.

\begin{theor}\label{Jantzen}
Consider a dominant weight $\lambda$ of $\cal G$. Let $w_0$ be the Weyl group element of maximal
length.
Let $A_{\rho+\lambda}:=H^{0}(X,{\cal L}_\lambda)_{\rm free}$ be the $\cal G$-module induced
by a line bundle
${\cal L}_\lambda$. Then for $\ell\to\infty$ one obtains the asymptotic expansion
\begin{align*}
\lefteqn{
\sum_{\mu\in X^*({\cal T})} \mu(t) \log [A_{\rho+\ell\lambda,\mu}:A_{{w_0(\rho+\ell\lambda)},\mu}]
}
\\
=&\sum_{\a\in\Psi}\Bigg(2\widetilde{\chi_{\rho+\ell\lambda-k\alpha}}(t)^*(\langle\alpha^\vee,\rho+\ell\lambda\rangle)+\log\langle\a^\vee,\ell\lambda\rangle\cdot\bz\chi_{\rho+\ell\lambda-k\alpha}(t)
\\
&+\bzs\chi_{\rho+\ell\lambda-k\alpha}(t)
-\bz\left(\chi_{\rho+\ell\lambda+k\alpha}(t)
\cdot\log^\ddagger\Bigg(\frac{\langle\a^\vee,\rho\rangle+k}{\langle\a^\vee,\ell\lambda\rangle}+1\right)
\\&+\chi_{\rho+\ell\lambda}(t)
\cdot\log^\ddagger\left(\frac{\langle\a^\vee,\rho\rangle}{\langle\a^\vee,\ell\lambda\rangle}+1\right)
\Bigg)^{[\deg_\ell>-\M]}
\Bigg)+O(\ell^{-\M}).
\end{align*}
\end{theor}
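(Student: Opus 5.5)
The plan is to reduce the statement to the expansion already carried out in the proof of Theorem \ref{AsymptSymmetricSpaces}. The input is the identity from \cite[Cor.\ 7.3]{KK}. It identifies the left hand side, the equivariant Jantzen sum $\sum_\mu\mu(t)\log[A_{\rho+\ell\lambda,\mu}:A_{w_0(\rho+\ell\lambda),\mu}]$, with the purely combinatorial piece of the torsion formula (\ref{TorsionAusKK}) for $X=\mathcal G/\mathcal B$, i.e.\ $G/T$. I expect that identity to take the form
\[
\sum_\mu\mu(t)\log[A_{\rho+\ell\lambda,\mu}:A_{w_0(\rho+\ell\lambda),\mu}]=-\sum_{\a\in\Psi}\sum_{k=1}^{\langle\a^\vee,\rho+\ell\lambda\rangle}\chi_{\rho+\ell\lambda-k\a}(t)\log k,
\]
possibly up to a sign convention. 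This is essentially Jantzen's formula $\sum_{\a>0}\sum_{0<mp<\langle\rho+\lambda,\a^\vee\rangle}\nu_p(mp)\chi(s_{\a,mp}\cdot\lambda)$, rewritten by summing over primes $p$ with $\sum_p\nu_p(k)\log p=\log k$. The first step is therefore to quote \cite[Cor.\ 7.3]{KK} in this form, here with $K=T$ and $\Psi=\Sigma_G$. That identity is purely algebraic and holds for any dominant $\lambda$, not only on symmetric spaces.

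The second step is to apply Proposition \ref{AsympMitZeta} to each inner sum. For each $\a$ we take $v=\langle\a^\vee,\ell\lambda\rangle$ and $a=\langle\a^\vee,\rho\rangle$, with $P(k)=\chi_{\rho+\ell\lambda-k\a}(t)$. By \cite[Eq.\ (66)]{K2}, $P$ is a linear combination of terms $k^m\ell^{m'}e^{ik\vp+i\ell\psi}$, so it has the required form. The result is exactly the first, unsimplified equality block in the proof of Theorem \ref{AsymptSymmetricSpaces}. We then use the reflection symmetry (\ref{Lie-symmetry}), $P(k)=-P(v+a-k)$, together with Proposition \ref{PHatStern}. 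This gives $\mbox{Res}\,P(v+a)=0$, so the $\log\langle\a^\vee,\ell\lambda\rangle\cdot\mbox{Res}$ and $\mbox{Res}\cdot\log^\ddagger$ terms drop. It also lets us replace $\tilde P^*$ by $P^*$, which appears in the statement as $\widetilde{\chi}^*$. No cancellation against $\chi^*$ is available here, because the Jantzen sum lacks the other torsion summands. This is why that term survives in the statement.

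The third step is to rewrite the $\bz(\dots\log^\ddagger)$ term through the shift $k\mapsto k+\langle\a^\vee,\rho+\ell\lambda\rangle$ and (\ref{Lie-symmetry}), as is done in (\ref{AsymptSymmetricSpacesProof}). This turns $\chi_{\rho+\ell\lambda-(k+\langle\a^\vee,\rho+\ell\lambda\rangle)\a}$ into $-\chi_{\rho+\ell\lambda+k\a}$. The main subtlety, and the step I expect to be the obstacle, is the extra term $\chi_{\rho+\ell\lambda}(t)\log^\ddagger(\frac{\langle\a^\vee,\rho\rangle}{\langle\a^\vee,\ell\lambda\rangle}+1)$. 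I would derive it from (\ref{bzSymmetry}) in the form $\bz(P(k)+P(-k))=-P(0)$. Passing between the sums over $k\ge1$ of $\chi_{\rho+\ell\lambda-k\a}$ and of $\chi_{\rho+\ell\lambda+k\a}$ inside the $\log^\ddagger$-weighted $\bz$ produces a boundary term at $k=0$. That boundary term is exactly $\chi_{\rho+\ell\lambda}$ times $\log^\ddagger$ evaluated at $k=0$, as in (\ref{zetaSchluckt1}). I would check that its sign and the truncation $[\deg_\ell>-\M]$ commute with this manipulation. They should, since the truncation is applied termwise in powers of $\ell$.

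Finally, the remainder consists of the $\bz^R$-terms. These are $O(\ell^{-\M})$ by Proposition \ref{estimate2}, exactly as in Lemma \ref{estimate3}, and this gives the claimed error $O(\ell^{-\M})$.
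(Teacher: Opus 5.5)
There is a genuine gap, and it sits where you expected the obstacle; it originates in Step 1. Your identity has the wrong summation range. Jantzen's formula, exactly as you quote it, runs over $0<mp<\langle\rho+\lambda,\a^\vee\rangle$. After using $\sum_p\nu_p(k)\log p=\log k$ and (\ref{Lie-symmetry}), it gives $-\sum_{\a\in\Psi}\sum_{k=1}^{\langle\a^\vee,\rho+\ell\lambda\rangle-1}\chi_{\rho+\ell\lambda-k\a}(t)\log k$. This is also the form of \cite[Cor.\ 7.3]{KK} once Kempf vanishing removes the torsion-cohomology terms.

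The omitted summand $k=\langle\a^\vee,\rho+\ell\lambda\rangle$ is not zero: by (\ref{Lie-symmetry}) it equals $-\chi_{\rho+\ell\lambda}(t)\log\langle\a^\vee,\rho+\ell\lambda\rangle$. With your range, Propositions \ref{AsympMitZeta} and \ref{PHatStern} give exactly (\ref{AsymptSymmetricSpacesProof}) with the Res-terms removed. That differs from the claimed right hand side by $\sum_{\a\in\Psi}\chi_{\rho+\ell\lambda}(t)\log\langle\a^\vee,\rho+\ell\lambda\rangle+O(\ell^{-\M})$, which is not $O(\ell^{-\M})$ since it contains $\chi_{\rho+\ell\lambda}(t)\log\ell$.

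The mismatch is already visible in the $\log$-coefficient. Your expansion contains $-\log\langle\a^\vee,\ell\lambda\rangle\cdot\bz\chi_{\rho+\ell\lambda+k\a}(t)$; the first block of (\ref{AsymptSymmetricSpacesProof}) has $\bz\chi_{\rho+\ell\lambda-(k+\langle\a^\vee,\rho+\ell\lambda\rangle)\a}$, not $\bz\chi_{\rho+\ell\lambda-k\a}$. By (\ref{zetaSchluckt1}) this equals the statement's $\log\langle\a^\vee,\ell\lambda\rangle\cdot\bz\chi_{\rho+\ell\lambda-k\a}(t)$ plus a leftover $\log\langle\a^\vee,\ell\lambda\rangle\cdot\chi_{\rho+\ell\lambda}(t)$.

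Your Step 3 cannot supply the missing term. In Proposition \ref{AsympMitZeta} the $\log^\ddagger$-term is $\bz$ applied to $k\mapsto P(k+a+v)\log^\ddagger(\frac{k+a}{v}+1)$. By (\ref{Lie-symmetry}), $P(k+a+v)=-\chi_{\rho+\ell\lambda+k\a}(t)$ identically in $k$, so the shift creates no boundary term. Moreover (\ref{bzSymmetry}) is an identity: applying it only rewrites the same value, and would also change the weight to $\log^\ddagger(\frac{a-k}{v}+1)$. It cannot produce an additional summand.

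The missing step is the paper's. Expand the sum up to $\langle\a^\vee,\rho+\ell\lambda\rangle$ as you do, then remove its last summand, i.e. add $-\chi_{\rho+\ell\lambda}(t)\log\langle\a^\vee,\rho+\ell\lambda\rangle$. Then Taylor expand $\log\langle\a^\vee,\rho+\ell\lambda\rangle=\log\langle\a^\vee,\ell\lambda\rangle+\log^\ddagger(\frac{\langle\a^\vee,\rho\rangle}{\langle\a^\vee,\ell\lambda\rangle}+1)$. The $\log^\ddagger$ piece is exactly the extra $\chi_{\rho+\ell\lambda}(t)\log^\ddagger$-term in the statement. The piece $-\log\langle\a^\vee,\ell\lambda\rangle\cdot\chi_{\rho+\ell\lambda}(t)$ combines with $-\log\langle\a^\vee,\ell\lambda\rangle\cdot\bz\chi_{\rho+\ell\lambda+k\a}(t)$ via (\ref{zetaSchluckt1}) into $+\log\langle\a^\vee,\ell\lambda\rangle\cdot\bz\chi_{\rho+\ell\lambda-k\a}(t)$.

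The rest of your plan is fine: the vanishing of the Res-terms, the survival of the $\widetilde{\chi}^*$-term, and the bound $\bz^R=O(\ell^{-\M})$.
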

\begin{proof}
This is obtained by combining the Jantzen sum formula as given in \cite[Cor. 7.3]{KK}
\begin{eqnarray*}
\lefteqn{
\sum_{\mu\in X^*(T)} \mu \log [A_{\rho+\lambda,\mu}:A_{{w_0(\rho+\lambda)},\mu}]
}\\&&
+\sum_{\mu\in X^*(T)} \sum_{q=0}^n \mu\cdot (-1)^q  \Big(\log \# H^q(X,{\cal
L}_\lambda)_{\mu,{\rm tor}}
-\log \#
H^{n-q}(X,{\cal L}_{w_0\ldotp\lambda})_{\mu,{\rm tor}}\Big)
\\&&
-(-1)^{l(w)}\chi_{\rho+\lambda}\sum_{q=0}^n (-1)^q \Big(\log \#
H^q(X,{\cal L}_\lambda)_{\lambda_0,{\rm tor}}-\log
\# H^{n-q}(X,{\cal L}_{w_0\ldotp\lambda})_{{\lambda_0},{\rm tor}}\Big)
\\&
=&-\sum_{\a\in\Psi}\sum_{k=1}^{\langle\a^\vee,\rho+\lambda\rangle-1}
\chi_{\rho+\lambda-k\a}\cdot\log k
\end{eqnarray*}
(slightly simplified for the case $\lambda$ dominant) with Eq.\ (\ref{AsymptSymmetricSpacesProof}). Apply the symmetry (\ref{Lie-symmetry}) as
$$\chi_{\rho+\ell\lambda-\langle\a^\vee,\rho+\ell\lambda\rangle\alpha}(t)\cdot\log \langle\a^\vee,\rho+\ell\lambda\rangle=
-\chi_{\rho+\ell\lambda}(t)\cdot\log \langle\a^\vee,\rho+\ell\lambda\rangle$$
to the term corresponding to $k=\langle\a^\vee,\rho+\ell\lambda\rangle$ in the sum Eq.\ (\ref{AsymptSymmetricSpacesProof}). For $\lambda$ dominant, the Kempf
vanishing theorem
\cite[Prop.\ 4.5]{Jan} shows that the torsion of the cohomology $H^q(X,{\cal L}_\lambda)$ vanishes for all $q$. This implies by \cite[Section 8.8]{Jan} that torsion of the cohomology $H^q(X,{\cal L}_{w_0\lambda})$ vanishes too. This leads to
\begin{align*}
\lefteqn{
\sum_{\mu\in X^*({\cal T})} \mu(t) \log [A_{\rho+\ell\lambda,\mu}:A_{{w_0(\rho+\ell\lambda)},\mu}]
}
\\
=&\sum_{\a\in\Psi}\Bigg(2\widetilde{\chi_{\rho+\ell\lambda-k\alpha}}(t)^*(\langle\alpha^\vee,\rho+\ell\lambda\rangle)-\log\langle\a^\vee,\ell\lambda\rangle\cdot\bz\chi_{\rho+\ell\lambda+k\alpha}(t)
\\
&+\bzs\chi_{\rho+\ell\lambda-k\alpha}(t)
-\chi_{\rho+\ell\lambda}(t)\cdot\log \langle\a^\vee,\rho+\ell\lambda\rangle
\\
&-\bz\left(\chi_{\rho+\ell\lambda+k\alpha}(t)
\cdot\log^\ddagger\left(\frac{\langle\a^\vee,\rho\rangle+k}{\langle\a^\vee,\ell\lambda\rangle}+1\right)\right)^{[\deg_\ell>-\M]}
\\&-\bz^R_{\langle\a^\vee,\rho\rangle,\langle\a^\vee,\ell\lambda\rangle,\M}\chi_{\rho+\ell\lambda+k\alpha}(t)
\Bigg).
\end{align*}
Taylor expanding $\log \langle\a^\vee,\rho+\ell\lambda\rangle\chi_{\rho+\ell\lambda}(t)$ leads to a term $-\log\langle\a^\vee,\ell\lambda\rangle\cdot(\bz\chi_{\rho+\ell\lambda+k\alpha}(t)+\chi_{\rho+\ell\lambda}(t))$, which is simplified by Eq.\ (\ref{zetaSchluckt1}) as
\[
\bz\chi_{\rho+\ell\lambda+k\alpha}(t)+\chi_{\rho+\ell\lambda}(t)=-\bz\chi_{\rho+\ell\lambda+k\alpha}(t).
\qedhere
\]
\end{proof}
This result can be rewritten using $\Phi(e^{i\vp},-m,0)=e^{i\vp}\Phi(e^{i\vp},-m,1)+\left\{
{1\atop0}\mbox{ if }{m=0,\atop m>0}
\right.$ by Eq.\ (\ref{Apostol0}) and by further simplifying the error term.

\section{The case of isolated fixed points}\label{SectionHomom}
In this chapter $G/K$ can be any compact complex homogeneous manifold or, in other words, a (generalised) flag manifold. Set ${\frak t}_{\rm reg}:=\{X\in\frak t|\a(X)\notin \BZ \
\forall\a\in\Sigma_G\}$. We shall use the operators $\bz_{m,a}$ introduced in Def. \ref{zetama}.
Notice that $\bz_{0,a}e^{ik\vp}\equiv\bz e^{ik\vp}=\frac1{e^{-i\vp}-1}$ by Eq.\ (\ref{IterationPhi}).

\begin{theor}\label{isolFixpkt}
Let $M=G/K$ be a compact complex homogeneous manifold and let $\mtr L:=\mtr L_{\rho_K+\lambda}$ be a $G$-invariant holomorphic Hermitian line bundle on $M$. Assume that $\mtr L$ is positive in the sense that $\forall\alpha\in\Psi:\<\alpha^\vee,\lambda\>>0$. Fix a K\"ahler metric $g_{X_0}$ on $M$. Choose $X\in{\frak t}_{\rm reg}$ and set $t:=e^X$. Then the asymptotic behaviour of the equivariant analytic torsion for high powers of $\mtr L$ is given by
\begin{eqnarray*}
\lefteqn{
T_t((M,g_{X_0}),\mtr L_{\rho_K+\lambda}^\ell)
=
-\log\ell\cdot\bz\sum_{\alpha\in\Psi}\chi_{\rho+\ell\lambda+k\alpha}(t)
+\bzs\sum_{\alpha\in\Psi}\chi_{\rho+\ell\lambda+k\alpha}(t)
}\\&&-\sum_{\alpha\in\Psi} \log\frac{\langle\a^\vee,\lambda\rangle}{\alpha^\vee(X_0)}\cdot\bz
\chi_{\rho+\ell\lambda+k\alpha}(t)
+\tilde C\cdot
\chi_{\rho+\ell\lambda}(t)
\\&&+\sum_{\a\in\Psi}\sum_{m=1}^{\M-1}\frac{\bz_{m,\langle\a^\vee,\rho\rangle}\chi_{\rho+\ell\lambda+k\alpha}(t)}{(-\langle\a^\vee,\ell\lambda\rangle)^{m}m}
-\sum_{\a\in\Psi}\bz^R_{\langle\a^\vee,\rho\rangle,\langle\a^\vee,\lambda\rangle\ell,\M}\chi_{\rho+\ell\lambda+k\alpha}(t)
\\&=&\log\ell\cdot \int_{M^t}
\Td^*_t(TM)\ch_t(L_{\rho_K+\lambda}^\ell)+O(1).
\end{eqnarray*}
The constant $\tilde C$ vanishes if $G$ is not of type $G_2$, $F_4$ or $E_8$.
\end{theor}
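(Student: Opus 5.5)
The plan is to start from an explicit formula for the equivariant torsion on a general complex homogeneous space $G/K$ with $t=e^X$, $X\in\frak t_{\rm reg}$. This is the formula of \cite{K2} (resp.\ \cite[Th. 5.2]{KK} without the symmetry assumption). It has the same shape as Eq.\ (\ref{TorsionAusKK}): terms $\bzs\chi^\odd_{\rho+\ell\lambda-k\a}(t)$, $\chi^*$-terms, the $\log\a^\vee(X_0)$-terms, and the finite sums $\sum_{k=1}^{\<\a^\vee,\rho+\ell\lambda\>}\chi_{\rho+\ell\lambda-k\a}(t)\log k$. For non-symmetric spaces there is additionally a term proportional to $\chi_{\rho+\ell\lambda}(t)$. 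This term comes from the non-symmetric correction in \cite{K2}; it is what produces $\tilde C$. That it vanishes outside $G_2,F_4,E_8$ I would take from the corresponding statement there, namely that the relevant constant is a sum over root-length ratios that is only nonzero for those types.

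Since $X$ is regular, every angle $\vp=2\pi w\a(X)$ in the expansion of $\chi_{\rho+\ell\lambda+k\a}(t)$ satisfies $e^{i\vp}\neq1$. Moreover $\chi_{\rho+\ell\lambda+k\a}(t)$ contains no polynomial factors in $k$ or $\ell$: by Weyl's character formula it is a finite sum of $e^{ik\vp+i\ell\psi}$ with constant coefficients. Hence all $\chi^*$, $\tilde\chi^*$ and Res-terms vanish, since they only involve $\vp\equiv0$. In Prop.\ \ref{AsympMitZeta} only $\ms=0$ occurs.

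Next I apply Prop.\ \ref{AsympMitZeta} with $v=\<\a^\vee,\ell\lambda\>$ and $a=\<\a^\vee,\rho\>$ to each log-sum, exactly as in the proof of Th.\ \ref{AsymptSymmetricSpaces}. The reflection symmetry (\ref{Lie-symmetry}) and Eq.\ (\ref{zetaSchluckt1}) convert $\bz$ of $\chi_{\rho+\ell\lambda-k\a}$ into $\bz$ of $\chi_{\rho+\ell\lambda+k\a}$. They also absorb the $\log\a^\vee(X_0)$-terms into $-\sum_\a\log\frac{\<\a^\vee,\lambda\>}{\a^\vee(X_0)}\bz\chi_{\rho+\ell\lambda+k\a}$ and a $-\log\ell\cdot\bz\sum\chi$ term. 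The $\bzs\chi^\odd$ terms combine with the $\bzs\chi_{\rho+\ell\lambda-k\a}$ from the expansion into $\bzs\sum_\a\chi_{\rho+\ell\lambda+k\a}(t)$. Here I use (\ref{bzsSymmetry}) together with the reflection symmetry, noting that $\bzs$ of a $k$-shift by $\<\a^\vee,\rho+\ell\lambda\>$ is computed via (\ref{FundShift}).

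The $\log^\ddagger$ term simplifies because $\ms=0$. The only $v$-dependence is then $e^{iv\psi}$, so $\bz\big(\chi\cdot\log^\ddagger(\frac{k+a}{v}+1)\big)^{[\deg>-\M]}$ equals $-\sum_{m=1}^{\M-1}\frac{(-1)^{m}}{m\,v^m}\bz\big((k+a)^m\chi\big)$. By (\ref{MultLogv}) this is $\sum_m \bz_{m,a}\chi/((-v)^m m)$, where $\bz_{m,a}$ is as in Def.\ \ref{zetama}. This matches Cor.\ \ref{AsympDim=0}, and the error term is the $\bz^R$-term of Prop.\ \ref{AsympMitZeta}. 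For the final equality I take the leading term only. Regularity makes $M^t$ finite, so the characters are $O(1)$ in $\ell$. The $\log\ell$ coefficient $-\bz\sum_\a\chi_{\rho+\ell\lambda+k\a}(t)$ is rewritten via the Atiyah--Bott formula and (\ref{zetaTd'}), as in Th.\ \ref{ErsteTerme}. This gives $\int_{M^t}(\Td_t(TM)\cdot n-\Td'_t(TM))\ch_t(L^\ell)$, which equals $\int_{M^t}\Td^*_t\ch_t(L^\ell)$ at isolated fixed points, because no $\theta_j\in2\pi\BZ$ occurs.

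The main obstacle is tracking the extra non-symmetric contribution in the torsion formula and identifying it precisely as $\tilde C\chi_{\rho+\ell\lambda}(t)$ with the stated vanishing criterion. A second, lesser obstacle is the careful bookkeeping of the $\bzs$ shift identities, where constant terms like $\log$ of $\<\a^\vee,\rho+\ell\lambda\>$ must cancel exactly.
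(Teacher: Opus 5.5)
\textbf{Gap: the starting formula for non-symmetric $G/K$ and the origin of $\tilde C$.}
Your computational route is the paper's. You apply Prop.~\ref{AsympMitZeta} with $v=\langle\alpha^\vee,\ell\lambda\rangle$ and $a=\langle\alpha^\vee,\rho\rangle$, and use (\ref{Lie-symmetry}) and (\ref{zetaSchluckt1}). You note that for regular $X$ the characters have no polynomial part, so $\chi^*$, $\tilde\chi^*$ and ${\rm Res}$ vanish and the $\log^\ddagger$-term collapses to the $\bz_{m,\langle\alpha^\vee,\rho\rangle}$-sum. You then identify the $\log\ell$-coefficient with $\int_{M^t}\Td^*_t(TM)\ch_t(L^\ell_{\rho_K+\lambda})$ via (\ref{zetaTd'}).

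The gap is in the input formula, which is the only non-routine ingredient of this theorem. \cite[Th.~5.2]{KK}, and the results of \cite{K2} it rests on, concern Hermitian symmetric spaces only. So there is no ``non-symmetric correction in \cite{K2}'' from which to extract $\tilde C$ or its vanishing. The paper instead starts from \cite[Th.~5.4]{KK}, which is valid for general $G/K$. There the $\bzs\chi^\odd$- and $\chi^*$-terms are replaced by the equivariant $R$-class term $\int_{M^t}\Td_t(TM)R_t(TM)\ch_t(L^\ell_{\rho_K+\lambda})$. The constant of that formula is fixed for positive $\mtr L$ by \cite[Th.~7.1]{KK}. Its vanishing unless $G$ has a factor $G_2$, $F_4$ or $E_8$ is the remark at the end of \cite[p.~660]{KK}.

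Your guessed mechanism, a root-length expression that is nonzero only for those types, cannot be right. $E_8$ is simply laced while $B_n$ and $C_n$ are not, so no root-length criterion singles out exactly $G_2,F_4,E_8$. Note rather that $G_2,F_4,E_8$ are precisely the simple types admitting no Hermitian symmetric space.

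To run your argument from the correct source, you must also bring the $R$-class term, for regular $t$, into the form $-2\sum_{\alpha\in\Psi}\bzs\chi^\odd_{\rho+\ell\lambda-k\alpha}(t)$. This uses the fixed point formula and Adams operations, as in the proof of Th.~\ref{ErsteTerme}. It is what the paper compresses into ``the formula has the same form''.

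\textbf{Minor point: the $\bzs$-terms combine by linearity alone.}
Merging the $\bzs$-terms needs neither (\ref{bzsSymmetry}) nor (\ref{FundShift}). The $\bzs$-term produced by Prop.~\ref{AsympMitZeta} is $\bzs P(k)$ with unshifted argument; the shift by $\langle\alpha^\vee,\rho+\ell\lambda\rangle$ only enters the $\bz$-terms. With $P(k)=\chi_{\rho+\ell\lambda-k\alpha}(t)$, linearity gives $-2\bzs P^\odd+\bzs P=\bzs P(-k)=\bzs\chi_{\rho+\ell\lambda+k\alpha}(t)$. Eq.~(\ref{bzsSymmetry}) computes $\bzs(P(k)+P(-k))$, which plays no role here. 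So the cancellation of constant terms you anticipate as a second obstacle does not arise.
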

Here $\chi_{\rho+\ell\lambda+k\alpha}(t)$ is a linear combination of terms of the form $e^{i(\vp'\ell+\vp k)}$ with $\vp,\vp'\in\BR\setminus2\pi\BZ$. 
\begin{proof}
By \cite[Th. 5.4]{KK}, by the formula for the constant $C$ in \cite[Th. 7.1]{KK} for positive $\mtr L$ and by the remark at the end of \cite[p.\ 660]{KK}, the torsion is given by
\begin{eqnarray*}
\lefteqn{T_t((M,g_{X_0}),\mtr L_{\rho_K+\lambda}^\ell)=
\int_{M^t} \Td_t(TM)R_t(TM)\ch_t(L_{\rho_K+\lambda}^\ell)
}\\
&&-\sum_{\alpha\in\Psi} \bz
\chi_{\rho+\ell\lambda-k\alpha}(t)\cdot\log\alpha^\vee(X_0)
-
\chi_{\rho+\ell\lambda}(t)\sum_{\a\in\Psi^+}\log\a^\vee(X_0)
-\sum_{\a\in\Psi}\sum_{k=1}^{\langle\a^\vee,\rho+\ell\lambda\rangle}
\chi_{\rho+\ell\lambda-k\alpha}(t)\cdot\log k.
\end{eqnarray*}
The result thus follows the same way as in the proof of Theorem \ref{AsymptSymmetricSpaces} and Theorem \ref{ErsteTerme} in the case of Hermitian symmetric spaces, as the formula for the analytic torsion has the same form. In the case $X\in{\frak t}_{\rm reg}$ the polynomial degree of $\chi_{\rho+\ell\lambda+k\alpha}(t)$ in $\ell$ and $k$ equals 0, and thus the $\log^\ddagger$-summand in Theorem \ref{AsymptSymmetricSpaces} is given by the result above.
\end{proof}
\begin{lemma}\label{errortermIsol}
The error term in Theorem \ref{isolFixpkt} can be estimated as
\begin{align*}
\lefteqn{
\left|
\sum_{\a\in\Psi}\bz^R_{\langle\a^\vee,\rho\rangle,\langle\a^\vee,\lambda\rangle\ell,\M}\chi_{\rho+\ell\lambda+k\alpha}(e^X)
\right|<
\frac{\ell^{-N}}{\left|\prod_{\a\in \Sigma^+}2\sin(\pi iw\a(X))\right|}
}
\\&\cdot\sum_{\a\in\Psi}\frac{1}{\langle\a^\vee,\lambda\rangle^{\M}}\Bigg(
\frac{(\M-1)!}{(2\pi)^{\M+1}}\sum_{w\in W_G}(\zeta(\M+1,w\alpha(X))+\zeta(\M+1,1-w\alpha(X)))
\\&+\frac{\#W_G}{\M}\cdot (\zeta(-\M)-\zeta(-\M,\langle\a^\vee,\rho\rangle+1))
\Bigg)
\\<&
\frac{\ell^{-N}}{\left|\prod_{\a\in \Sigma^+}2\sin(\pi iw\a(X))\right|}
\\&\cdot\sum_{\a\in\Psi}\frac{1}{\langle\a^\vee,\lambda\rangle^{\M}}\Bigg(
(\M-1)!\sum_{w\in W_G}\frac{2\zeta(2)}{(2\pi \min_{q\in\BZ}|q+w\alpha(X)|)^{N+1}}
\\&+\frac{\#W_G}{\M}\cdot \langle\a^\vee,\rho\rangle^{\M+1}
\Bigg)
\\\leq&\frac{(\M-1)!}{(2\pi\ell\min_{q\in\BZ,w\in W_G,\a\in\Psi}|q+w\alpha(X)|\<\a^\vee,\lambda\>)^\M}\cdot c_1
\end{align*}
with $c_1\in\BR^+$ being independent of $\M$ and $\ell$.
\end{lemma}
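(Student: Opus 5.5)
The plan is to make the character $\chi_{\rho+\ell\lambda+k\alpha}(e^X)$ explicit for $X\in\frak t_{\rm reg}$ and then feed each exponential term into the remainder bound of Theorem \ref{AsymptLerch} via the definition of $\bz^R$. By the Weyl character formula,
\[
\chi_{\rho+\ell\lambda+k\alpha}(e^X)=\frac{\sum_{w\in W_G}(-1)^{{\rm sign}\,w}e^{2\pi i w(\rho+\ell\lambda+k\alpha)(X)}}{\prod_{\beta\in\Sigma^+}(e^{\pi i\beta(X)}-e^{-\pi i\beta(X)})}.
\]
Hence it is a sum of $\#W_G$ terms $c_w e^{ik\vp_w+i\ell\psi_w}$ with $\vp_w=2\pi w\alpha(X)$, $\psi_w=2\pi w\lambda(X)$, and no polynomial factors, so $n=r=0$ in the notation of $\bz^R$. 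Since $X$ is regular, $e^{i\vp_w}\neq1$ for every $w$. Each $|c_w|$ is bounded by the inverse of $|\prod_{\beta\in\Sigma^+}2\sin(\pi\beta(X))|$, and this gives the common prefactor.

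Next, insert the definition of $\bz^R_{a,v,\M}$ with $n=r=0$, $a=\langle\a^\vee,\rho\rangle$ and $v=\langle\a^\vee,\lambda\rangle\ell$. Each term then becomes $v^{-\M}\M^{-1}$ times a unimodular phase times $R(e^{-i\vp_w},-\M,a+1,0,v)$. By Theorem \ref{AsymptLerch}, $|R|<C(e^{-i\vp_w},-\M,a+1)$, where
\[
C=\frac{\M!}{(2\pi)^{\M+1}}\Big(\zeta(\M+1,\tfrac{\vp}{2\pi})+\zeta(\M+1,1-\tfrac{\vp}{2\pi})\Big)+\zeta(-\M)-\zeta(-\M,a+1).
\]
The Hurwitz arguments should be reduced mod $1$, so that $\vp/2\pi$ is replaced by the fractional part of $w\alpha(X)$; because $w$ ranges over all of $W_G$, the sign flip $-\vp_w$ is harmless. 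Using $\M!/\M=(\M-1)!$ and summing over $w$ and $\a$ yields the first inequality. For the Faulhaber part I use the crude bound $\#W_G$ times the term, with a factor $1/\M$ remaining.

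For the second inequality, apply the estimate from the proof of Prop.\ \ref{estimate2}. With $d=\min_q|q+w\alpha(X)|\in(0,\frac12]$, I get $\zeta(\M+1,d)+\zeta(\M+1,1-d)\le 2\zeta(2)d^{-\M-1}$, since the sum minus the leading term $d^{-\M-1}$ is at most $(2\zeta(2)-1)2^{\M+1}\le(2\zeta(2)-1)d^{-\M-1}$. Dividing by $(2\pi)^{\M+1}$ then gives the stated form. The Faulhaber polynomial satisfies $\sum_{m=1}^{a}m^\M\le a^{\M+1}$.

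The last step is absorbing everything into $(\M-1)!/(2\pi\ell c_2')^\M\cdot c_1$, where $c_2'$ is the minimum of $d\langle\a^\vee,\lambda\rangle$. The Hurwitz part is immediate: set $d\langle\a^\vee,\lambda\rangle\ge c_2'$ and keep the extra $1/(2\pi d)$ inside $c_1$. I expect the main obstacle to be the Faulhaber term $\frac{\#W_G}{\M}\big(a/\langle\a^\vee,\lambda\rangle\big)^{\M}a$, which must be dominated uniformly in $\M$. Since $d\le\frac12$, I need
\[
\frac{a^{\M}}{\M}\le c\,\frac{(\M-1)!}{(2\pi d)^{\M}},
\]
and this holds because $(2\pi d a)^\M/\M!\le e^{2\pi d a}$, exactly as in the proof of Lemma \ref{estimate3}. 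That lets me take
\[
c_1=\frac{\#W_G\,\#\Psi}{|\prod_{\beta\in\Sigma^+}2\sin\pi\beta(X)|}\Big(\frac{2\zeta(2)}{2\pi d_{\min}}+a_{\max}e^{\pi a_{\max}}\Big),
\]
which is independent of $\M$ and $\ell$.
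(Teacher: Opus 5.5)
Your proposal is correct and follows essentially the same route as the paper. You expand the character via the Weyl character formula, bound each term using $|R|<C$ from Theorem \ref{AsymptLerch} through the definition of $\bz^R$ with $n=r=0$, and then derive the remaining inequalities with the estimates of Prop.\ \ref{estimate2} and Lemma \ref{estimate3}; you also make explicit details the paper only cites, such as reducing $w\alpha(X)$ modulo $\BZ$ before inserting it into the Hurwitz zeta functions and giving an explicit $c_1$.
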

\begin{proof}
Since $X\in{\frak t}_{\rm reg}$, the characters are given by
$$
\chi_{\rho+\ell\lambda+k\alpha}(e^X)=\frac{\sum_{w\in W_G}(-1)^w\exp(2\pi iw(\rho+\ell\lambda+k\alpha)(X))}{\prod_{\a\in \Sigma^+}2i\sin(\pi iw\a(X))}.
$$
Theorem \ref{AsymptLerch} shows that
\begin{eqnarray*}
\lefteqn{
\left|\bz^R_{\langle\a^\vee,\rho\rangle,\langle\a^\vee,\lambda\rangle\ell,\M}e^{2\pi ikw\alpha(X)}
\right|
<\frac{\ell^{-\M}}{\M}C(e^{2\pi ikw\alpha(X)},-\M,\langle\a^\vee,\rho\rangle+1)
}
\\&=&\ell^{-\M}\bigg(\frac{(\M-1)!}{(2\pi)^{\M+1}}(\zeta(\M+1,w\alpha(X))+\zeta(\M+1,1-w\alpha(X))
\\&&+\frac{\zeta(-\M)-\zeta(-\M,\langle\a^\vee,\rho\rangle+1}{\M}\bigg).
\end{eqnarray*}
Combining these we obtain the first estimate. The other estimates follow as in Lemma \ref{estimate3}.
\end{proof}

\begin{ex}\label{P1Beisp} On $\BP^1\BC=\SU(2)/{\bf S}({\bf U}(1)\x {\bf U}(1))$, let $\lambda$ be the weight given by $\left({i\vp\atop0}{0\atop -i\vp}\right)\mapsto \frac\vp{2\pi}$. According to \cite[Th. 2]{K1}, the torsion is for any $\ell\in\BZ$ given by the real number
\begin{eqnarray*}
T_t((\BP^1\BC,g_{X_0}),\mtr L_{\rho_K+\lambda}^\ell)
&=&
\frac{\cos|\ell+1|\frac\vp2}{i\sin\frac\vp2}\left(e^{i\vp}\frac{\partial\Phi}{\partial s}(e^{i\vp},0,1)-e^{-i\vp}\frac{\partial\Phi}{\partial s}(e^{-i\vp},0,1)\right)
\\&&-\sum_{k=1}^{|\ell+1|}\frac{\sin(|\ell+1|-2k)\frac\vp2}{\sin\frac\vp2}\log k.
\end{eqnarray*}
for the Fubini-Study metric with $\|\alpha\|^2=2$. For general metrics there is an additional term which can be written as $\frac{\cos(\ell+2)\frac\phi2}{2\sin^2\frac\phi2}\log\alpha^\vee(X_0)$ by \cite[Th. 5.2]{KK}.
\end{ex}
On the other hand, Th.\ \ref{isolFixpkt} shows in this case:
\begin{cor}\label{asymptTP1}
The equivariant torsion $T_t(\BP^1\BC,{\cal O}(\ell))$ for the rotation $t$ of the sphere by an angle $\vp$ has for $\ell\to+\infty$ the asymptotic expansion
\begin{eqnarray*}
T_t(\BP^1\BC,{\cal O}(\ell))&=&
\frac{\cos(\ell+2)\frac{\vp}2}{-2 \sin^2\frac{\vp}2}\log\ell
\\&&+\frac{\cos(\ell+2)\frac{\vp}2}{2 \sin^2\frac{\vp}2}\log\alpha^\vee(X_0)
+\frac{\cos |\ell+1|\frac{\vp}2}{\sin\frac{\vp}2}R^{\rm rot}(\vp)
\\&&+\frac{\sin|\ell+1|\frac{\vp}2}{2\sin\frac{\vp}2}\Bigg(
-\log2\pi+\Gamma'(1)-\frac{\psi(\frac{\vp}{2\pi})+\psi(1-\frac{\vp}{2\pi})}2
\Bigg)
\\&&+\frac1{2i\sin\frac\vp2}\Bigg(
\sum_{m=1}^{\M-1}\frac{e^{i(\ell+3)\frac\vp2}\Phi(e^{i\vp},-m,2)-e^{-i(\ell+3)\frac\vp2}\Phi(e^{-i\vp},-m,2)}{(-\ell)^{m}m}
\\&&+\frac{e^{i(\ell+3)\frac\vp2}R(e^{i\vp},-\M,2,0,\ell)
-e^{-i(\ell+3)\frac\vp2}R(e^{-i\vp},-\M,2,0,\ell)}{(-\ell)^\M\M}
\Bigg).
\end{eqnarray*}
By applying $T_t(\BP^1\BC,{\cal O}(-\ell))=T_t(\BP^1\BC,{\cal O}(\ell-2))$ this formula also yields a slightly different asymptotic expansion for $\ell\to-\infty$.
\end{cor}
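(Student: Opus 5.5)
We specialise Th.\ \ref{isolFixpkt} to $G=\SU(2)$, $K={\bf S}({\bf U}(1)\x{\bf U}(1))$, $\Psi=\{\a,-\a\}$, $t=e^X$ with $\a(X)=\frac\vp{2\pi}$, $\<\a^\vee,\lambda\>=1$, $\<\a^\vee,\rho\>=1$ and $\<\a^\vee,\rho_K\>=0$. For $-\a$ the signs are reversed, and the terms belonging to $-\a$ will be handled by the symmetry (\ref{Lie-symmetry}).

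\textbf{Step 1: the characters.} By the Weyl character formula, $\chi_{\rho+\ell\lambda+k\a}(t)=\frac{\sin((\ell+1+2k)\frac\vp2)}{\sin\frac\vp2}$, since $\rho+\ell\lambda+k\a$ has coefficient $\ell+1+2k$ relative to $\lambda$. Writing this as $\frac{e^{i(\ell+1)\frac\vp2}e^{ik\vp}-e^{-i(\ell+1)\frac\vp2}e^{-ik\vp}}{2i\sin\frac\vp2}$ shows that it is a combination of $e^{\pm ik\vp}$, so Def.\ \ref{zetama} can be applied termwise.

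\textbf{Step 2: the $\bz$-terms.} Use $\bz e^{ik\vp}=\frac1{e^{-i\vp}-1}$. Combining the $\pm\a$ contributions gives
\[\bz\sum_{\pm\a}\chi=\frac{\cos(\ell+2)\frac\vp2}{2\sin^2\frac\vp2}.\]
This produces the $\log\ell$ coefficient $-\frac{\cos(\ell+2)\frac\vp2}{2\sin^2\frac\vp2}$, and likewise the $\log\a^\vee(X_0)$ term, whose sign is fixed by $\<\a^\vee,\lambda\>=1$. The constant $\tilde C$ vanishes for $\SU(2)$.

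\textbf{Step 3: the $\bzs$-term.} Use (\ref{Rrot-Formel}). The real parts $\frac{-\log2\pi+\Gamma'(1)}2-\frac{\psi+\psi}4$ coincide for $\vp$ and $-\vp$. The imaginary parts $\pm iR^{\rm rot}(\vp)$ are odd in $\vp$. Collecting the coefficients $e^{\pm i(\ell+1)\frac\vp2}/(2i\sin\frac\vp2)$ turns the even part into a $\sin|\ell+1|\frac\vp2$ multiple and the odd part into $\cos|\ell+1|\frac\vp2\,R^{\rm rot}/\sin\frac\vp2$. The absolute value $|\ell+1|$ enters when the formula is compared with Example \ref{P1Beisp}, which gives a cross-check of these two terms.

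\textbf{Step 4: the $\ell^{-m}$ terms and the remainder.} Evaluate $\bz_{m,1}$ on $e^{\pm ik\vp}$ as $e^{\pm i\vp}\Phi(e^{\pm i\vp},-m,2)$. Multiplying by the prefactors $e^{\pm i(\ell+1)\frac\vp2}$ gives $e^{\pm i(\ell+3)\frac\vp2}\Phi(e^{\pm i\vp},-m,2)/(2i\sin\frac\vp2)$. The remainder $\bz^R$ is rewritten through its definition with $a=1$, $n=r=0$, which yields $R(\cdot,-\M,2,0,\ell)$. The $-\a$ summand must be merged with the $\a$ summand rather than added separately. To do this, either use (\ref{Lie-symmetry}) as in the proof of Theorem \ref{AsymptSymmetricSpaces}, or apply Proposition \ref{AsympMitZeta} directly to $-\sum_{k=1}^{\ell+1}\frac{\sin(\ell+1-2k)\frac\vp2}{\sin\frac\vp2}\log k$ from Example \ref{P1Beisp} with $v=\ell$, $a=1$.

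The main obstacle is this merging, together with keeping track of the signs and the $e^{\pm i\vp}$ shifts ($\Phi(\cdot,\cdot,1)$ versus $\Phi(\cdot,\cdot,2)$, via (\ref{Apostol0})). I would fix all conventions by comparing the leading terms with Example \ref{P1Beisp}.

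The final remark follows from Serre duality, $T_t(\BP^1\BC,{\cal O}(-\ell))=T_t(\BP^1\BC,{\cal O}(\ell-2))$, which one can also read off the symmetry $|\ell+1|$ in Example \ref{P1Beisp}.
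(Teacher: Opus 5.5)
There is a genuine error in your setup: $\Psi$ is not $\{\a,-\a\}$. The paper writes $\Psi:=\Sigma_G\setminus\Sigma_K$, but it means the positive roots not in $K$, i.e.\ the weights of $T^{1,0}M$. Otherwise the hypothesis $\langle\a^\vee,\lambda\rangle>0$ for all $\a\in\Psi$ in Theorem \ref{isolFixpkt} would fail for $-\a$, and $\#\Psi=\dim_\BC M$ is used throughout. For $\BP^1\BC$ the paper's proof states $\Psi=\{2\lambda\}$, so every sum over $\Psi$ has a single term.

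With your two-element $\Psi$ the intermediate claims are false:
\begin{itemize}
\item By (\ref{bzSymmetry}), $\bz\chi_{\rho+\ell\lambda+k\a}(t)+\bz\chi_{\rho+\ell\lambda-k\a}(t)=-\chi_{\rho+\ell\lambda}(t)=-\frac{\sin(\ell+1)\frac\vp2}{\sin\frac\vp2}$. The value $\frac{\cos(\ell+2)\frac\vp2}{2\sin^2\frac\vp2}$ you state is $\bz\chi_{\rho+\ell\lambda+k\a}(t)$ alone.
\item Since $\chi_{\rho+\ell\lambda+k\a}(t)+\chi_{\rho+\ell\lambda-k\a}(t)=\frac{\sin(\ell+1)\frac\vp2}{\sin\frac\vp2}(e^{ik\vp}+e^{-ik\vp})$, the $\pm iR^{\rm rot}(\vp)$ parts of the $\bzs$-term would cancel. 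The $R^{\rm rot}$-term of the statement would disappear and the $\sin|\ell+1|\frac\vp2$-term would double.
\end{itemize}
So the ``merging'' of a $-\a$ summand, which you call the main obstacle, is an artefact of the wrong $\Psi$. No merging can produce the statement, because the single $\a$-summand already equals it. Your plan also leaves this step unresolved and proposes fixing signs by matching against Example \ref{P1Beisp}, which is not a derivation.

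With $\Psi=\{\a\}$ the proof is a direct substitution into Theorem \ref{isolFixpkt}, which is exactly the paper's argument. Your Steps 2--4, as you actually compute them, are already single-root computations:
\begin{itemize}
\item $\bz e^{ik\vp}=\frac1{e^{-i\vp}-1}$ gives $-\bz\chi_{\rho+\ell\lambda+k\a}(t)=-\frac{\cos(\ell+2)\frac\vp2}{2\sin^2\frac\vp2}$ as the coefficient of both $\log\ell$ and $-\log\a^\vee(X_0)$. The paper cross-checks this against $\int_{M^t}\Td^*_t(TM)\ch_t(L^\ell_{\rho_K+\lambda})$.
\item (\ref{Rrot-Formel}) together with $\bzs e^{-ik\vp}=\overline{\bzs e^{ik\vp}}$ gives the $R^{\rm rot}$- and digamma terms.
\item $\bz_{m,1}e^{\pm ik\vp}=e^{\pm i\vp}\Phi(e^{\pm i\vp},-m,2)$, with $\langle\a^\vee,\ell\lambda\rangle=\ell$, gives the $\ell^{-m}$-terms.
\item For the remainder, the definition of $\bz^R_{1,\ell,\M}$ attaches $\frac{(-1)^\M}{\M\ell^\M}e^{-i(\ell+3)\frac\vp2}R(e^{-i\vp},-\M,2,0,\ell)$ to the $e^{ik\vp}$-component of $2i\sin\frac\vp2\,\chi_{\rho+\ell\lambda+k\a}(t)$. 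Combined with the minus sign in front of the $\bz^R$-sum in Theorem \ref{isolFixpkt}, this gives exactly the stated error term.
\end{itemize}
You do not need (\ref{Lie-symmetry}) here; it was already used in deriving Theorem \ref{isolFixpkt}.
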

By Lemma \ref{errortermIsol} the last summand is bounded by
$$
\frac{\frac{\M!}{(2\pi)^{\M+1}}(\zeta(\M+1,\frac\vp{2\pi})+\zeta(\M+1,1-\frac\vp{2\pi}))+1}{\ell^\M\M\cdot|\sin\frac\vp2|}.
$$
\begin{proof}
In this case $\Psi=\{2\lambda\}$ and $\chi_{\rho+\ell\lambda+k\alpha}(\left({e^{i\vp/2}\atop0}{0\atop e^{-i\vp/2}}\right))=\frac{\sin(\ell+2k+1)\frac\vp2}{\sin\frac\vp2}$ holds for the rotation of the sphere by an angle $\vp$. We assume that $\ell>0$. The coefficient of $\log\ell$ is thus given by
\begin{eqnarray*}
-\bz\frac{\sin(\ell+2k+1)\frac\vp2}{\sin\frac\vp2}
=\frac{-1}{2i\sin\frac\vp2}\left(\frac{e^{i(\ell+1)\vp/2}}{e^{-i\vp}-1}-\frac{e^{-i(\ell+1)\vp/2}}{e^{i\vp}-1}\right)
=\frac{\cos((\ell+2)\frac{\vp}2)}{-2 \sin^2\frac\vp2}.
\end{eqnarray*}
Equivalently,
\begin{eqnarray*}
\int_{M^t}
\Td^*_t(TM)\ch_t(L_{\rho_K+\lambda}^\ell)
=\frac{e^{i\ell\vp/2}}{(1-e^{-i\vp})^2}+\frac{e^{-i\ell\vp/2}}{(1-e^{i\vp})^2}
=\frac{\cos((\ell+2)\frac{\vp}2)}{-2 \sin^2\frac\vp2}.
\end{eqnarray*}
Therefore this term equals also the coefficient of $-\log\alpha^\vee(X_0)$.
Hence for $\ell\to+\infty$
\begin{eqnarray*}
\lefteqn{
T_t((M,g_{X_0}),\mtr L_{\rho_K+\lambda}^\ell)
=
-\frac{\cos((\ell+2)\frac{\vp}2)}{2 \sin^2\frac\vp2}\cdot\log\frac{\ell}{\alpha^\vee(X_0)}
}\\&&
+\frac1{2i\sin\frac\vp2}\Bigg(e^{i(\ell+3)\frac\vp2}\frac{\partial\Phi}{\partial s}(e^{i\vp},0,1)
-e^{-i(\ell+3)\frac\vp2}\frac{\partial\Phi}{\partial s}(e^{-i\vp},0,1)
\\&&+\sum_{m=1}^{\M-1}\frac{e^{i(\ell+3)\frac\vp2}\Phi(e^{i\vp},-m,2)-e^{-i(\ell+3)\frac\vp2}\Phi(e^{-i\vp},-m,2)}{(-\ell)^{m}m}
\\&&+\left(\bz^R_{1,\ell,\M}e^{-i(\ell+1+2k)\frac\vp2}-\bz^R_{1,\ell,\M}e^{i(\ell+1+2k)\frac\vp2\vp}\right)
\Bigg).
\end{eqnarray*}
Eq.\ (\ref{Rrot-Formel}) provides the value of the $\bzs$-term.
\end{proof}

\section{Lie-algebra-equivariant torsion on the projective plane}
An analytic torsion which is equivariant with respect to the action of a Lie algebra has been defined in \cite[Section 2.6]{BG}.
Let $X$ be a generator of an isometric circle action on $\BP^1\BC$ such that the its flow has period $2\pi$.
By \cite[Th. 9.4]{KSgenus} Bismut--Goette's $X$-equivariant torsion is given by
\begin{eqnarray}\label{gEquivTorsion}\nonumber
T_{tX}(\BP^1\BC,{\cal O}(\ell))&=&
-\frac{\cos\frac{(\ell+1)t}2}{\sin\frac{t}2}\sum_{k\geq1\atop k{\rm\,odd}}\left(2\zeta'(-k)
+{\cal H}_k\zeta(-k)\right)\frac{(-1)^{\frac{k+1}2}t^k}{k!}
\\&&+\sum_{k=1}^{|\ell+1|}\frac{\sin(2k-|\ell+1|)\frac{t}2}{\sin\frac{t}2}\log k
+\left(
\frac{\cos\frac{(\ell+1)t}2)}{t\sin\frac{t}2}
\right)^\#
\end{eqnarray}
where $(t^{2k})^\#:=t^{2k}\cdot\left\{
{2{\cal H}_{2k+1}-{\cal H}_{k}\atop0}
\mbox{ if }{k\geq0,\atop k=-1}
\right.$.
This expression can be interpreted in two ways, which give different asymptotic expansions in $\ell$: As a convergent power series in $t$ for $|t|<2\pi$, and as formal power series in $t$, where one gets one asymptotic expansion for each power $t^m$. In this section we shall describe the asymptotic expansion with respect to the first interpretation. Section \ref{TorsionForm} shall use the other interpretation.

We give the asymptotic expansion in terms of $\tilde\ell:=|\ell+1|$. Thus the coefficients of the two leading terms are the same as for an asymptotic expansion in terms of $\ell$. Using $(\ell+1)^{-m}\sim\sum_{k=0}^\infty\left({m+k-1\atop k}\right)(-1)^k\ell^{-m-k}$ and $\log(\ell+1)\sim\log\ell-\sum_{k=1}^\infty\frac{(-1)^k}{k\ell^k}$ one can obtain the asymptotic expansions in $\ell$.
\begin{theor}\label{AsymptP1Lie}
Define $f(r):=\left(
\frac{r}{\sin\frac{tr}2}-\frac{1}{\sin\frac{t}2}
\right)\cdot\frac{1}{t(1-r^2)}$ as the continuous extension to $r\in]-\frac{2\pi}{|t|},\frac{2\pi}{|t|}[$.
The summands of $T_{tX}(\BP^1\BC,{\cal O}(\ell))$ have the following asymptotic expansion for $\tilde\ell:=|\ell+1|\to\infty$:
\begin{eqnarray*}
\lefteqn{
\sum_{k=1}^{\tilde\ell}\frac{\sin(2k-\tilde\ell)\frac{t}2}{\sin\frac{t}2}\log k=
-\frac{\cos\frac{(\tilde\ell+1)t}{2}}{2\sin^2\frac{t}2}\log\tilde\ell
}\\&&-\frac1{2i\sin\frac{t}2}\left(e^{\frac{-i(\tilde\ell-2)t}2}\frac\partial{\partial s}\Phi(e^{it},0,1)
-e^{\frac{i(\tilde\ell-2)t}2}\frac\partial{\partial s}\Phi(e^{-it},0,1)\right)
\\&&+\frac1{2i\sin\frac{t}2}\sum_{m=1}^{\M-1}\frac{(-\tilde\ell)^{-m}}m 
\left(e^{\frac{i(\tilde\ell+2)t}2}\Phi(e^{it},-m,1)
-e^{-\frac{i(\tilde\ell+2)t}2}\Phi(e^{-it},-m,1)\right)
\\&&+\frac1{2i\sin\frac{t}2}\frac{(-\tilde\ell)^{-\M}}\M 
\left(e^{\frac{i(\tilde\ell+2)t}2}R(e^{it},-\M,1,0,\tilde\ell)
-e^{-\frac{i(\tilde\ell+2)t}2}R(e^{-it},-\M,1,0,\tilde\ell))\right)
\end{eqnarray*}
and
\begin{eqnarray*}
\left(
\frac{\cos\frac{\tilde\ell t}2}{t\sin\frac{t}2}
\right)^\#&\sim&
-\frac1{t\sin\frac{t}2}\Bigg(\sin\frac{\tilde\ell t}2\cdot
\left(\frac\pi2+\sum_{m=0}^\infty\frac{(-1)^m(2m)!}{(\tilde\ell t)^{2m+1}}\right)
\\&&+\cos\frac{\tilde\ell t}2\cdot\left(-\log(\tilde\ell t)+\Gamma'(1)+\sum_{m=1}^\infty\frac{(-1)^m(2m-1)!}{(\tilde\ell t)^{2m}}
\right)\Bigg)
\\&&+\sum_{m=1}^\infty2\left(\frac2{\tilde\ell t}\right)^m\cos\frac{\tilde\ell t+\pi m}2\cdot f^{(m-1)}(1).
\end{eqnarray*}
\end{theor}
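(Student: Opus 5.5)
The statement has two parts, and I will treat them separately.

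\emph{The sum $\sum_{k=1}^{\tilde\ell}\frac{\sin(2k-\tilde\ell)\frac t2}{\sin\frac t2}\log k$.} I will write the sine as
\[
\frac{1}{2i\sin\frac t2}\left(e^{-i\tilde\ell t/2}e^{ikt}-e^{i\tilde\ell t/2}e^{-ikt}\right).
\]
This turns the sum into two sums of the form $\sum_{k=1}^{v}e^{ik\vp}\log k$, with $\vp=\pm t$ and $v=\tilde\ell$. Eq.\ (\ref{SumToPhi}) with $m=0$ expresses each such sum as
\[
e^{i(v+1)\vp}\frac{\partial\Phi}{\partial s}(e^{i\vp},0,v+1)-e^{i\vp}\frac{\partial\Phi}{\partial s}(e^{i\vp},0,1).
\]
\begin{itemize}
\item The second term gives directly the $\frac{\partial}{\partial s}\Phi(e^{\pm it},0,1)$ summand, with prefactor $e^{\mp i\tilde\ell t/2}e^{\pm it}=e^{\mp i(\tilde\ell-2)t/2}$, as in the statement.
\item For the first term I apply Cor.\ \ref{AsympDim=0} with $v=\tilde\ell$, $a=1$ and $e^{i\vp}\neq1$. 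This is legitimate since $|t|<2\pi$, $t\neq0$ (the case $t=0$ follows by continuity or is excluded). It yields $-\log\tilde\ell/(1-e^{\pm it})$, then the terms $\Phi(e^{\pm it},-m,1)/((-\tilde\ell)^m m)$, then the remainder $R(e^{\pm it},-\M,1,0,\tilde\ell)$.
\end{itemize}
The prefactors are $e^{\mp i\tilde\ell t/2}e^{\pm i(\tilde\ell+1)t}=e^{\pm i(\tilde\ell+2)t/2}$, which matches the stated exponents. The $\log\tilde\ell$ coefficient combines as in the proof of Cor.\ \ref{asymptTP1}:
\[
\frac{1}{2i\sin\frac t2}\left(\frac{-e^{i(\tilde\ell+2)t/2}}{1-e^{it}}+\frac{e^{-i(\tilde\ell+2)t/2}}{1-e^{-it}}\right)=-\frac{\cos\frac{(\tilde\ell+1)t}2}{2\sin^2\frac t2}.
\]
This is a routine trigonometric simplification.

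\emph{The term $\left(\frac{\cos\frac{\tilde\ell t}2}{t\sin\frac t2}\right)^\#$.} This is the main obstacle. I will first expand $\frac{\cos(\tilde\ell t/2)}{t\sin(t/2)}$ as a power series in $t$ and apply the operator $\#$ coefficientwise. The weight $2{\cal H}_{2k+1}-{\cal H}_k$ should be generated by an integral representation. The identity ${\cal H}_n=\int_0^1\frac{1-x^n}{1-x}dx$ suggests writing $\#$ applied to $g(t)$ as an integral over a rescaling parameter $r$ of $g(rt)$ against a kernel. I expect $2{\cal H}_{2k+1}-{\cal H}_k$ to arise from $\int_0^1\frac{g(t)-r\,g(rt)}{1-r^2}\,dr$-type expressions, because $\int_0^1\frac{1-r^{2k+2}}{1-r^2}dr$ produces odd-index harmonic sums. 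Matching this with the term $-1/(t\sin\frac t2)$ and the function $f(r)$ is the heuristic for the kernel.

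Concretely, I will verify on monomials $t^{2k}$ that
\[
(g)^\#(t)=\int_0^{1}\big(\text{kernel}\big)\,dr+\text{boundary terms},
\]
with the kernel involving $\frac{r\cos(\tilde\ell rt/2)}{\sin(rt/2)}$ minus its value at $r=1$, divided by $1-r^2$. This is exactly how $f$ enters. The $k=-1$ convention (the $t^{-2}$ term is killed) fixes the subtraction of $\frac1{\sin\frac t2}$.

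With the integral representation in hand, the large-$\tilde\ell$ asymptotics come from two sources.
\begin{itemize}
\item \emph{The endpoint $r=1$.} Repeated integration by parts of $\int^1\cos(\tilde\ell rt/2)\,f(r)\,dr$ produces the series $\sum_m 2(2/(\tilde\ell t))^m\cos\frac{\tilde\ell t+\pi m}2 f^{(m-1)}(1)$.
\item \emph{The singular part near $r=0$.} Here the kernel behaves like $\cos(\tilde\ell rt/2)/r$, which gives cosine and sine integrals. Their standard asymptotics are
\[
\mathrm{Ci}(x)\sim \frac{\sin x}{x}\sum\frac{(-1)^m(2m)!}{x^{2m}}-\frac{\cos x}{x^2}\sum\cdots,\qquad \mathrm{Si}(x)\to\frac\pi2,
\]
together with the constant $\Gamma'(1)$ from $\mathrm{Ci}$ near $0$. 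These yield the bracketed $\sin\frac{\tilde\ell t}{2}(\frac\pi2+\dots)$ and $\cos\frac{\tilde\ell t}2(-\log(\tilde\ell t)+\Gamma'(1)+\dots)$ terms.
\end{itemize}

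Pinning down the exact kernel is where I expect the real difficulty. I will first try to derive the kernel by checking it on $t^{2k}$ via the Beta-type integrals $\int_0^1 r^{2k}\log$-weights, and only then do the asymptotic analysis.
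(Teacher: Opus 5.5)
\textbf{Finite sum.} Your treatment of $\sum_{k=1}^{\tilde\ell}\frac{\sin(2k-\tilde\ell)\frac t2}{\sin\frac t2}\log k$ is correct and is the paper's own route: split into $e^{\pm ikt}$, apply Eq.\ (\ref{SumToPhi}) and Cor.\ \ref{AsympDim=0} with $a=1$, as in Cor.\ \ref{asymptTP1}. One small correction: $t=0$ must simply be excluded. It does not follow by continuity, because the coefficients and the remainder bound blow up as $t\to0$.

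\textbf{The integral representation.} The correct representation, which the paper imports from \cite[Prop.\ 9.2]{KSgenus}, is, with $g(t)=\frac{\cos(\tilde\ell t/2)}{t\sin(t/2)}$,
\[
g^\#(t)=\int_{-1}^1\frac{g(t)-r^2g(rt)}{1-r^2}\,dr=-\int_{-1}^{1}\Big(\frac{r\cos\frac{\tilde\ell tr}2}{\sin\frac{tr}2}-\frac{\cos\frac{\tilde\ell t}2}{\sin\frac t2}\Big)\frac{dr}{t(1-r^2)}.
\]
The weight is $r^2g(rt)$, not $r\,g(rt)$. It is checked in one line on monomials: $\int_{-1}^1\frac{1-r^{2k+2}}{1-r^2}\,dr=2\sum_{j=0}^k\frac1{2j+1}=2{\cal H}_{2k+1}-{\cal H}_k$, and the integrand vanishes identically for $t^{-2}$. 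So this step is not the real difficulty.

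\textbf{The gap: where the ${\rm Si}$, ${\rm Ci}$ and $\log$ terms come from.} You place them at a singular part near $r=0$. But the kernel $\frac{r\cos(\tilde\ell tr/2)}{\sin(tr/2)}$ is smooth at $r=0$ (it tends to $\frac2t$), so that second source produces nothing.

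The missing step is the splitting given by the definition of $f$:
\[
\frac{r}{\sin\frac{tr}2}=\frac1{\sin\frac t2}+t(1-r^2)f(r).
\]
It yields
\[
g^\#(t)=-\int_{-1}^1\cos\tfrac{\tilde\ell tr}2\,f(r)\,dr-\frac1{t\sin\frac t2}\int_{-1}^1\frac{\cos\frac{\tilde\ell tr}2-\cos\frac{\tilde\ell t}2}{1-r^2}\,dr .
\]
In the first integral $f$ is smooth and even on $[-1,1]$. Repeated integration by parts, with both endpoints $\pm1$ contributing equally, gives the $f^{(m-1)}(1)$-series; this part of your plan works once $f$ is isolated this way.

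The second integral is the source of $\log(\tilde\ell t)$, $\Gamma'(1)$, ${\rm Si}$ and ${\rm Ci}$, and it lives at $r=\pm1$, where $1-r^2$ vanishes. Write $\frac1{1-r^2}=\frac12\big(\frac1{1-r}+\frac1{1+r}\big)$ and use evenness. Then substitute $u=1-r\in[0,2]$, expand $\cos\frac{\tilde\ell t(1-u)}2$ by the addition formula, and use $\int_0^x\frac{\cos u-1}u\,du={\rm Ci}(x)+\Gamma'(1)-\log x$. This gives exactly
\[
\sin\tfrac{\tilde\ell t}2\,{\rm Si}(\tilde\ell t)+\cos\tfrac{\tilde\ell t}2\big({\rm Ci}(\tilde\ell t)+\Gamma'(1)-\log(\tilde\ell t)\big),
\]
which is Eq.\ (\ref{SKlasseAsympt}) (from \cite[Th.\ 9.3]{KSgenus}).

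Only after this do the full asymptotic series of both ${\rm Si}$ and ${\rm Ci}$ come in; ${\rm Si}\to\frac\pi2$ alone is not enough. Combining them with the addition formulas for $\sin(2a-a)$ and $\cos(2a-a)$, where $a=\frac{\tilde\ell t}2$, produces the bracketed $\sin\frac{\tilde\ell t}2(\cdots)+\cos\frac{\tilde\ell t}2(\cdots)$ term of the statement.
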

\begin{Bem}$t\sin(\frac{t}2)^{m+1}f^{(m-1)}(1)$ is a polynomial in $t$, $\sin\frac{t}2$ and $\cos\frac{t}2$. Using the series $\frac1{\sin\frac{t}2}=\frac2t+\frac{t}{\pi^2}\sum_{m=1}^\infty\frac{(-1)^m}{(\frac{t}{2\pi})^2-m^2}$ one finds
\[
f^{(m-1)}(1)=(m-1)!t^{m-1}\sum_{p=1}^\infty\frac{(-1)^p4p\pi}{4p^2\pi^2-t^2}\left(
\frac1{(2p\pi-t)^{m}}-\frac1{(-2p\pi-t)^{m}}
\right).
\]
\end{Bem}
\begin{proof}
The first expansion follows by Proposition \ref{AsympMitZeta} as in Corollary \ref{asymptTP1}.
By \cite[Prop.\ 9.2]{KSgenus} we know that
$$
\left(
\frac{\cos\frac{\tilde\ell t}2}{t\sin\frac{t}2}
\right)^\#
=
-\int_{-1}^{1}
\left(
\frac{ r \cos\frac{\tilde\ell tr}2}{\sin\frac{tr}2}
-\frac{\cos\frac{\tilde\ell t}2}{\sin\frac{t}2}
\right)\cdot\frac{dr}{t(1-r^2)}.
$$
Furthermore \cite[Th. 9.3]{KSgenus} explains how to find the asymptotic expansion of this integral: We shall need the sine and cosine integral functions defined by ${\rm Si}(x)=\int_0^x\frac{\sin r}{r}\,dr$ and ${\rm Ci}(x)=-\int_x^{+\infty}\frac{\cos r}{r}\,dr$ for $x\in\BR^+$. In the proof of \cite[Th. 9.3]{KSgenus} the above integral is decomposed as
\begin{equation}
\label{SKlasseAsympt}
\left(
\frac{\cos\frac{\tilde\ell t}2}{t\sin\frac{t}2}
\right)^\#
=-\int_{-1}^1\cos\frac{\tilde\ell tr}2\cdot f(r)\,dr
-\frac{\sin\frac{\tilde\ell t}2\cdot {\rm Si}(\tilde\ell t)
-\cos\frac{\tilde\ell t}2\cdot\left(-\Gamma'(1)-{\rm Ci}(\tilde\ell t)+\log(\tilde\ell t)
\right)}
{t\sin\frac{t}2}.
\end{equation}
Partial integration shows for the $2m$-th derivative of $f$ that
\begin{eqnarray*}
\int_{-1}^1\cos\frac{\tilde\ell tr}2\cdot f^{(2m)}(r)\,dr&=&\frac2{\tilde\ell t}\sin\frac{\tilde\ell tr}2\cdot f^{(2m)}(r)\Big|_{-1}^1
-\frac2{\tilde\ell t}\int_{-1}^1\sin\frac{\tilde\ell tr}2\cdot f^{(2m+1)}(r)\,dr
\\&\stackrel{f\,\rm even}=&2\frac2{\tilde\ell t}\sin\frac{\tilde\ell t}2\cdot f^{(2m)}(1)
-\frac2{\tilde\ell t}\int_{-1}^1\sin\frac{\tilde\ell tr}2\cdot f^{(2m+1)}(r)\,dr
\end{eqnarray*}
and analogously
\begin{eqnarray*}
\int_{-1}^1\sin\frac{\tilde\ell tr}2\cdot f^{(2m+1)}(r)\,dr=
-2\frac2{\tilde\ell t}\cos\frac{\tilde\ell t}2\cdot f^{(2m+1)}(1)
+\frac2{\tilde\ell t}\int_{-1}^1\cos\frac{\tilde\ell tr}2\cdot f^{(2m+2)}(r)\,dr.
\end{eqnarray*}
Combining these one obtains for any $N\in\BN_0$ by induction
\begin{eqnarray*}
-\int_{-1}^1\cos\frac{\tilde\ell tr}2\cdot f(r)\,dr
&=&\sum_{m=1}^{N}2\left(\frac2{\tilde\ell t}\right)^m(-1)^{m(m+1)/2}f^{(m-1)}(1)\cdot
\left\{
{\sin\frac{\tilde\ell t}2\atop\cos\frac{\tilde\ell t}2}
\mbox{ for $m$ }{\mbox{odd,}\atop\mbox{even}}
\right.
\\&&-\left(\frac2{\tilde\ell t}\right)^{N}(-1)^{N(N+1)/2}\cdot\int_{-1}^1dr\ f^{(N)}(r)
\left\{
{\sin\frac{\tilde\ell tr}2\atop\cos\frac{\tilde\ell tr}2}
\mbox{ for $N$ }{\mbox{odd,}\atop\mbox{even.}}
\right.
\end{eqnarray*}
Using
\[
\cos\frac{\tilde\ell t+\pi m}2=
(-1)^{m(m+1)/2}f^{(m-1)}(1)\cdot
\left\{
{\sin\frac{\tilde\ell t}2\atop\cos\frac{\tilde\ell t}2}
\mbox{ for $m$ }{\mbox{odd,}\atop\mbox{even}}
\right.
\]
the last term is further simplified.
The trigonometric integrals have the asymptotic expansions (\cite[9.8(10)-(11)]{Erd2})
\begin{eqnarray*}
{\rm Si}(\tilde\ell t)&\sim&\frac\pi2+\sin \tilde\ell t\cdot\sum_{m=1}^\infty\frac{(-1)^m(2m-1)!}{(\tilde\ell t)^{2m}}-\cos \tilde\ell t\cdot\sum_{m=0}^\infty\frac{(-1)^m(2m)!}{(\tilde\ell t)^{2m+1}},
\\{\rm Ci}(\tilde\ell t)&\sim&\cos \tilde\ell t\cdot\sum_{m=1}^\infty\frac{(-1)^m(2m-1)!}{(\tilde\ell t)^{2m}}+\sin \tilde\ell t\cdot\sum_{m=0}^\infty\frac{(-1)^m(2m)!}{(\tilde\ell t)^{2m+1}}
\end{eqnarray*}
and thus one obtains the expansion of the second summand in Eq.\ (\ref{SKlasseAsympt}) by trigonometric addition formulae.
\end{proof}

\section{Asymptotics of torsion forms}\label{TorsionForm}
In this section the asymptotic expansion of torsion forms, as defined in \cite[Def. 3.8]{BK}, for fibrations by projective lines is computed. This is achieved using the formula for this specific torsion form $T_\pi(\mtr{{\cal O}(\ell)})$ given by the author as \cite[Th. 1.1]{KSgenus}. We refer to that article for details on the definition of this object. In the computation we will need the following result that apparently has not been published before.

\begin{prop}\label{nichtHa}
For any $m\in\BZ^+$, $j\in\BZ^+_0$, $a\in\BC$ the following identity holds:
\begin{eqnarray*}
\sum_{r=0}^j\left({j\atop r}\right)\frac{a^r}{j+m-r}=\frac1{m\left({j+m\atop m}\right)}\left(
(-1)^ma^{m+j}+(a+1)^{j+1}\sum_{r=0}^{m-1}\left({j+r\atop j}\right)(-a)^{m-1-r}
\right).
\end{eqnarray*}
\end{prop}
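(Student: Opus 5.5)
Both sides are polynomials in $a$ (on the right, the factor $(a+1)^{j+1}$ times a polynomial of degree $m-1$ plus $(-1)^m a^{m+j}$ gives degree $\le m+j$). So it suffices to prove the identity for real $a$ in an open interval, or even as an identity of formal polynomials. The plan is to write the left hand side as an integral. Using $\frac1{j+m-r}=\int_0^1 x^{j+m-r-1}\,dx$ and the binomial theorem gives
\[
S(a):=\sum_{r=0}^j\binom{j}{r}\frac{a^r}{j+m-r}=\int_0^1 x^{m-1}(x+a)^j\,dx .
\]
Substituting $y=x+a$ turns this into
\[
S(a)=\int_a^{a+1}(y-a)^{m-1}y^j\,dy ,
\]
and expanding $(y-a)^{m-1}$ binomially gives a polynomial in $a$ that can be evaluated termwise.

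The right hand side shows how to organize the evaluation. It contains $(a+1)^{j+1}$ times a polynomial, plus a pure power $a^{m+j}$. This suggests integrating by parts $m-1$ times in $\int_a^{a+1}(y-a)^{m-1}y^j\,dy$, each time differentiating $(y-a)^{m-1-r}$ and integrating $y^{j+r}$.
\begin{itemize}
\item The boundary terms at $y=a$ vanish while $m-1-r>0$, because of the factor $(y-a)$.
\item The boundary terms at $y=a+1$ are $\frac{(m-1)!}{(m-1-r)!}\frac{j!}{(j+r+1)!}(a+1)^{j+r+1}$ up to sign.
\item The final remaining integral is $\int_a^{a+1} y^{j+m-1}\,dy$ with coefficient $\frac{(m-1)!\,j!}{(j+m-1)!}$, contributing $\frac{(a+1)^{j+m}-a^{j+m}}{(j+m)}\cdot\frac{(m-1)!j!}{(j+m-1)!}$ with an alternating sign.
\end{itemize}
Since $\frac1{m\binom{j+m}{m}}=\frac{(m-1)!\,j!}{(j+m)!}$, the $a^{j+m}$ term immediately produces $\frac{(-1)^m a^{m+j}}{m\binom{j+m}{m}}$, which matches the first summand on the right.

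What remains is to check that the collected boundary terms at $y=a+1$ equal
\[
\frac{(a+1)^{j+1}}{m\binom{j+m}{m}}\sum_{r=0}^{m-1}\binom{j+r}{j}(-a)^{m-1-r}.
\]
This step is the main obstacle. The integration by parts naturally produces powers $(a+1)^{j+r+1}$, whereas the target has a single factor $(a+1)^{j+1}$ times powers of $-a$. I would not rewrite $(a+1)^{r}$ as a sum in powers of $a$ and compare coefficients, since that leads to messy double sums.

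Instead I would prove the statement by induction on $m$ with $j$ fixed. The base case $m=1$ reads $S(a)=\frac{(a+1)^{j+1}-a^{j+1}}{j+1}$, which is immediate from the integral. For the inductive step, set $S_m(a):=\int_0^1x^{m-1}(x+a)^j\,dx$ and use $x^{m}=x^{m-1}\bigl((x+a)-a\bigr)$. This gives the recursion
\[
S_{m+1}^{(j)}(a)=S_m^{(j+1)}(a)-a\,S_m^{(j)}(a).
\]
This recursion raises $j$, so I would instead induct on $m$ simultaneously for all $j$. The inductive step then reduces to verifying that the right hand side $R_m^{(j)}$ satisfies the same recursion. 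That check is an elementary identity between the sums $\sum_r\binom{j+r}{j}(-a)^{m-1-r}$ and uses Pascal's rule $\binom{j+1+r}{j+1}=\binom{j+r}{j}+\binom{j+r}{j+1}$. The normalizing prefactors relate via $\frac{1}{(m+1)\binom{j+m+1}{m+1}}=\frac{(m)!\,j!}{(j+m+1)!}$ and $\frac1{m\binom{j+m+1}{m}}=\frac{(m-1)!(j+1)!}{(j+m+1)!}$.
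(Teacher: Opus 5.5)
Your proof is correct, but it takes a different route from the paper. The paper substitutes $a\mapsto-1/a$, reverses $r\mapsto j-r$ and multiplies by $(-a)^{m+j}$ to get the equivalent identity (\ref{nichtHa2}). The left side of that form has the closed-form derivative $-(-a)^{m-1}(1-a)^j$. So a single differentiation of the right side, where every coefficient cancels because $(r+1)\binom{j+r+1}{j}=(j+r+1)\binom{j+r}{j}$, together with the check at $a=0$, finishes the proof without induction. You instead write the sum as $S_m^{(j)}(a)=\int_0^1x^{m-1}(x+a)^j\,dx$, use the contiguous recurrence $S_{m+1}^{(j)}=S_m^{(j+1)}-aS_m^{(j)}$, and induct on $m$ uniformly in $j$. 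Both arguments are about the same incomplete Beta integral: $(-a)^{m+j}S_m^{(j)}(-1/a)=(-1)^m\int_0^a y^{m-1}(1-y)^j\,dy$ is exactly the left side of (\ref{nichtHa2}). The paper differentiates in the upper limit, while you shift the parameters. Your recurrence check for the right side does go through, although you only sketched it. After dividing by $\frac{m!\,j!}{(j+m+1)!}$, the pure powers $a^{m+j+1}$ match because $\frac{j+1}{m}-\frac{j+m+1}{m}=-1$. What remains is $mP_{m+1}^{(j)}=(j+1)(a+1)P_m^{(j+1)}-(j+m+1)aP_m^{(j)}$ with $P_m^{(j)}:=\sum_{r=0}^{m-1}\binom{j+r}{j}(-a)^{m-1-r}$. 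This holds coefficientwise by Pascal's rule, except for the constant coefficient, which needs $(j+1)\binom{j+m}{j+1}=m\binom{j+m}{j}$. When you write it out, include these two pieces that Pascal's rule does not cover. Your route avoids the inversion trick and makes the structure more transparent, at the cost of more bookkeeping. The paper's route is shorter and yields (\ref{nichtHa2}) directly, which is the form the paper later needs for $j<0$ in (\ref{T24voll}).
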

Hansen gives a recursion relation for the left hand side in \cite[(6.11.19)]{Hansen} and provides values for a few special values of $m$. The term $\sum_{r=0}^{m-1}(-1)^{r+m+1}\left({j+r\atop j}\right)a^{m-1-r}$ is the regular part of the Laurent series about $a=0$  of $-\frac{(-1)^ma^{m+j}}{(a+1)^{j+1}}$ outside the unit disc. For the case $m+j<0$ see Eq.\ (\ref{nichtHa2}).
\begin{proof}
In the case $a=0$, both side equal $\frac1{j+m-r}$. Now 
replace $a$ by $-\frac1a$ and $r$ on the left side by $j-r$ to obtain
\begin{eqnarray*}
(-a)^{-m-j}\sum_{r=0}^j\left({j\atop r}\right)\frac{(-a)^{m+r}}{m+r}=\frac1{m\left({j+m\atop m}\right)}\left(
(-1)^m(-a)^{-m-j}+(1-\frac1a)^{j+1}\sum_{r=0}^{m-1}\left({j+r\atop j}\right)a^{r+1-m}
\right).
\end{eqnarray*}
Multiplying by $(-a)^{m+j}$ transforms this equation to
\begin{eqnarray}\label{nichtHa2}
\sum_{r=0}^j\left({j\atop r}\right)\frac{(-a)^{m+r}}{m+r}=\frac{(-1)^m}{m\left({j+m\atop m}\right)}\left(
1-(1-a)^{j+1}\sum_{r=0}^{m-1}\left({j+r\atop j}\right)a^r
\right).
\end{eqnarray}
Differentiating the right hand side of Eq.\ (\ref{nichtHa2}) by $a$ yields
\begin{eqnarray*}
&&\frac{(-1)^m}{m\left({j+m\atop m}\right)}\Bigg(
(1-a)^j\sum_{r=0}^{m-1}\left({j+r\atop j}\right)(j+1)a^r
-(1-a)^j(1-a)\sum_{r=0}^{m-1}\left({j+r\atop j}\right)ra^{r-1}
\Bigg)
\\&=&\frac{(-1)^m(1-a)^j}{m\left({j+m\atop m}\right)}\Bigg(\sum_{r=0}^{m-1}\Bigg(\left({j+r\atop j}\right)(j+1)
-\left({j+r+1\atop j}\right)(r+1)
\\&&+\left({j+r\atop j}\right)r\Bigg)a^r
+m\left({j+m\atop m}\right)a^{m-1}\Bigg)
\\&=&-(1-a)^j(-a)^{m-1}
\end{eqnarray*}
as every summand in the sum over $r$ vanishes. The result equals the derivative of the left hand side of Eq.\ (\ref{nichtHa2}). As both sides of Eq.\ (\ref{nichtHa2}) equal 0 at $a=0$, the formula holds for any $a$.
\end{proof}
\begin{Bem}
Multiplying both sides by $m\left({j+m\atop m}\right)$ leads to $\sum_{r=0}^j\left({j+m\atop m}\right)\left({j\atop r}\right)\frac{ma^r}{j+m-r}$ on the left hand side. Expanding the term $(a+1)^{j+1}$ on the right hand side gives
\begin{eqnarray*}
&&
(-1)^ma^{m+j}+\sum_{q=0}^{j+1}\sum_{r=0}^{m-1}(-1)^{r+m+1}\left({j+1\atop r}\right)\left({j+r\atop j}\right)a^{m-1-r+q}
\\&\stackrel{r\mapsto \atop m-1-r+q}=&
(-1)^ma^{m+j}+\sum_{q=0}^{j+1}\sum_{r=q}^{q+m-1}(-1)^{r+q}\left({j+1\atop r}\right)\left({j+m-1-r+q\atop j}\right)a^r
\\&=&
(-1)^ma^{m+j}+\sum_{r=0}^{j+m}\sum_{q=r}^{r+m-1}(-1)^{r+q}\left({j+1\atop r}\right)\left({j+m-1-r+q\atop j}\right)a^r
\\&\stackrel{q\mapsto r-q}=&
(-1)^ma^{m+j}+\sum_{r=0}^{j+m}\sum_{q=0}^{m-1}(-1)^{q}\left({j+1\atop r-q}\right)\left({j+m-1-q\atop j}\right)a^r
\end{eqnarray*}
and thus comparing the coefficients of powers of $a$ shows for any $m\in\BZ^+$, $j\in\BZ^+_0$, $0\leq r< m+j$ that
\begin{equation}\label{nichtHaBinomial}
m\left({j+m\atop m}\right)\left({j\atop r}\right)=(j+m-r)\sum_{q=0}^{m-1}(-1)^{q}\left({j+1\atop r-q}\right)\left({j+m-1-q\atop j}\right).
\end{equation}
Clearly this formula holds for $m=0$ and any $r\in\BZ$ as well. For fixed $r\in\BZ^+_0$ both sides are polynomials in $j$ and thus Eq.\ (\ref{nichtHaBinomial}) holds for any $m\in\BZ^+_0,r\in\BZ^+_0,j\in\BC$, which in turn can be used to generalize Prop.\ \ref{nichtHa}.
\end{Bem}
\begin{theor}\label{asympTpi}
Let $P\to B$ be an ${\bf U}(2)$ principal bundle, $\pi:\BP\to B$ the induced $\BP^1\BC$-bundle and $E:=P\x_{{\bf U}(2)}\BC^2$, thus $\BP=\BP E$. 
The asymptotic expansion of the torsion form $T_\pi(\mtr{{\cal O}(\ell)})$ for $\mtr {\cal O}(\ell)$ on the $\BP^1\BC$-bundle $\pi:\BP E\to B$ for $\ell\to\infty$ in any differential form degree $n\in\BN_0$ is given by
\begin{eqnarray*}
T_\pi(\mtr{{\cal O}(\ell)})^{[2n]}
&=&\sum_{m=0}^{\lfloor n/2\rfloor}
\frac{2c_1(E)^{n-2m}(c_1(E)^2-4c_2(E))^m}{(n-2m)!(2m+1)!}(-\frac\ell2)^{n-2m}
\\&&\cdot\Bigg(\sum_{k=0}^m
\left(\zeta'(-2k-1)
-{\cal H}_{2k+1}\frac{B_{2k+2}}{2k+2}\right)
\left({2m+1\atop2k+1}\right)B_{2m-2k}\left(-\frac\ell2\right)
\\&&-B_{2m+2}\left(-\frac\ell2\right)\frac{2{\cal H}_{2m+1}-{\cal H}_{m}}{2m+2}
\\&&-\frac1{2m+2}\sum_{j=1}^{2m+2}\left({2m+2\atop j}\right)B_{2m+2-j}(-\frac\ell2)\cdot\frac{(\ell+1)^{j}}{j}
\\&&-\log\ell\cdot\left(B_{2m+2}(-\frac\ell2)+(1+\frac\ell2)B_{2m+1}(-\frac\ell2)\right)
\\&&+\frac{\log2\pi}2B_{2m+1}(-\frac\ell2)+\sum_{k=1}^{m}\frac{(-1)^{k+1}(2m+1)!}{2(2m+1-2k)!(2\pi)^{2k}}\zeta(2k+1)B_{2m+1-2k}(-\frac\ell2)
\\&&+\sum_{j=1-\M}^{2m}\sum_{q=\max\{0,1+j\}}^{2m+1}\left(\sum_{r=\max\{0,1+j\}}^q\left({q\atop r}\right)\frac{(-\frac12)^r}{r-j}\right)
\\&&\cdot\left({2m+1\atop q}\right)B_{2m+1-q}\cdot\left((-1)^{q-j}\frac{B_{q-j+1}}{q-j+1}-1+\frac1{q-j+1}\right)(-1)^j\ell^j
\\&&+\sum_{q=0}^{2m+1}\sum_{r=0}^{2m+1-q}\frac{(2m+1)!(q-1+\M)!}{q!(2m+1-q-r)!(q+r+\M)!}B_{2m+1-q-r}
\\&&\cdot\frac{(-1)^\M\ell^{-\M}}{2^q}
R(1,-q-r-\M,2,r,\ell)
\Bigg)
\\&=&\sum_{m=0}^{\lfloor n/2\rfloor}
\frac{c_1(E)^{n-2m}(c_1(E)^2-4c_2(E))^m}{(n-2m)!(2m+1)!}(-1)^{n}
\\&&\cdot\Bigg(
\frac{1}{2^{n+1}}\ell^{n+1}\log\frac{\ell}{2\pi}
+\frac{2m+1}{2^{n-1}3}\ell^{n}\log\ell
\\&&+\frac{7-4m   -6(2m+1)\log(2\pi)+24(2m+1)\zeta'(-1)}{2^{n+2}3}\ell^{n}
\Bigg)
\\&&
+O(\ell^{n-1}\log\ell).
\end{eqnarray*}
\end{theor}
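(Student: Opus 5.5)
The plan is to start from the closed formula for the torsion form $T_\pi(\mtr{{\cal O}(\ell)})$ in \cite[Th. 1.1]{KSgenus}. There it is expressed via the Lie-algebra-equivariant torsion (\ref{gEquivTorsion}), read as a \emph{formal} power series in $t$. The Chern--Weil substitution replaces $t^{2m}$ by a multiple of $(c_1(E)^2-4c_2(E))^m$, and the degree-$2n$ part picks up the factor $c_1(E)^{n-2m}(-\frac\ell2)^{n-2m}/(n-2m)!$ from the twist by $\ch({\cal O}(\ell))$.

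Concretely, I would expand each of the three summands of (\ref{gEquivTorsion}) in powers of $t$. The factors $\frac{\cos\frac{(\ell+1)t}2}{\sin\frac t2}$ and $\frac{\sin(2k-\tilde\ell)\frac t2}{\sin\frac t2}$ have Taylor coefficients given by Bernoulli polynomials evaluated at $-\frac\ell2$ and at $k-\frac{\ell}{2}$, through the generating function $\frac{te^{xt}}{e^t-1}$. The coefficient of $t^{2m}$ then becomes:
\begin{itemize}
\item from the $\zeta'$-part, a finite sum of $(\zeta'(-2k-1)+{\cal H}_{2k+1}\zeta(-2k-1))$ times $B_{2m-2k}(-\frac\ell2)$;
\item from the $\#$-part, the terms with $2{\cal H}_{2m+1}-{\cal H}_m$;
\item from the middle part, a sum $\sum_{k=1}^{\ell+1}P(k)\log k$ with $P$ a polynomial in $k$ of degree $2m+1$ whose coefficients are polynomials in $\ell$.
\end{itemize}
The first two are already in closed form. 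Only the third needs asymptotic analysis.

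For that sum I apply Prop.\ \ref{AsympMitZeta} in the case $e^{i\vp}=1$, with $v=\ell$ and $a=1$, so that $v+a=\ell+1$. Since $\frac{\sin(2k-\tilde\ell)\frac t2}{\sin\frac t2}$ is odd under $k\mapsto \tilde\ell-k$, the symmetry hypothesis of Prop.\ \ref{PHatStern} should apply up to the boundary term. This lets me cancel $\mbox{Res}$ and replace $\tilde P^*$ by $P^*$, producing the Bernoulli/harmonic-number terms. The $\bzs$-terms give $\zeta'$ values. Via Eq.\ (\ref{bzsSymmetry}) (Jonqui\`ere/Lerch at $e^{i\vp}=1$), they also give the $\frac{\log 2\pi}2B_{2m+1}$ and $\zeta(2k+1)/(2\pi)^{2k}$ terms. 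The $\log^\ddagger$ terms require expanding $\bz\big((k+1+\ell)^{q}\log^\ddagger(\frac{k+1}{\ell}+1)\big)$, which generates the double sums $\sum_r\binom qr\frac{a^r}{r-j}$. I would simplify these with Prop.\ \ref{nichtHa}, specifically its form (\ref{nichtHa2}) for the range $m+j<0$. The error term is $\bz^R_{1,\ell,\M}$, which Theorem \ref{AsymptLerch} converts into the displayed $R(1,\dots)$ expression.

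For the second, leading-order equality I would keep only powers $\ell^{n+1}\log\ell$, $\ell^n\log\ell$, and $\ell^n$. I would use $B_j(-\frac\ell2)=(-\frac\ell2)^j-\frac j2(-\frac\ell2)^{j-1}+\dots$ and collect terms, checking the constant $7-4m$ in the same spirit as the $\frac7{24}$ computation in Th.\ \ref{BVFinski}.

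The main obstacle is the bookkeeping in the $\log^\ddagger$ part. Reindexing the double binomial sums into the stated triple sum with the factor $\big((-1)^{q-j}\frac{B_{q-j+1}}{q-j+1}-1+\frac1{q-j+1}\big)$ is where errors are likely. I would first verify the result on $m=0$ (where $n$ is arbitrary) against Cor.\ \ref{AsympDim=0} before doing the general reindexing.
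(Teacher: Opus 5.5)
Your proposal follows the paper's proof essentially step by step. The paper likewise starts from \cite[Th.~1.1]{KSgenus}, extracts the $t^{2m}$-coefficients $T_1,T_2,T_3$ via Bernoulli polynomials, and writes $T_2=\sum_{k=1}^{\ell+1}B_{2m+1}(k-\frac\ell2)\log k$, which it treats with Props.~\ref{PHatStern} and \ref{AsympMitZeta} ($a=1$, $v=\ell$) to obtain the pieces $T_{21},\dots,T_{25}$ you describe (the $\tilde P^*$-term, the $\log\ell$-term, the $\bzs$-term, the $\log^\ddagger$-term and $\bz^R_{1,\ell,\M}$). Three bookkeeping details need correcting. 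The $\zeta'$-part of (\ref{gEquivTorsion}) carries $2\zeta'(-2k-1)$, not $\zeta'(-2k-1)$; half of it cancels against the odd part of $-\bzs B_{2m+1}(k-\frac\ell2)$. The antisymmetry $B_{2m+1}(1-x)=-B_{2m+1}(x)$ holds exactly, with no boundary term. Finally, the stated closed form of the $\log\ell$-coefficient requires $\bz B_m(-k)=-B_{m+1}-B_m$, which the paper obtains from Euler's convolution identity for Bernoulli numbers.
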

To keep the full formula somewhat shorter it was written in terms of Bernoulli polynomials of $\ell$ instead of powers of $\ell$, using the equations (\ref{T1einfach}), (\ref{T3einfach}), (\ref{T21einfach}), (\ref{T22einfach}), (\ref{T23einfach}), (\ref{T24einfach}) in the following proof. An expression as a linear combination of $\ell^j$ can be obtained by using (\ref{T1voll}), (\ref{T3voll}), (\ref{T21voll}), (\ref{T22voll}), (\ref{T23voll}), (\ref{T24voll}) instead. The terms in the result are obtained by adding $T_1+T_{23}^{\rm odd}$, $T_3$, $T_{21}$, $T_{22}$, $T_{23}^{\rm even}$, $T_{24}$, $T_{25}$ in that order.

The asymptotic expansion of $T_\pi(\mtr{{\cal O}(\ell)})^{[0]}$ up to $O(\ell^{-1})$ was already given by Finski in \cite[Section 4.3]{Fi} using the formula for $T_\pi(\mtr{{\cal O}(\ell)})^{[0]}$ from \cite{K2}.
\begin{proof}
By \cite[Th. 1.1]{KSgenus}, the $X$-equivariant torsion is given by $T_\pi(\mtr{{\cal O}(\ell)})=e^{-\frac\ell2c_1(E)}T_\ell(c_1(E)^2-4c_2(E))$ where $T_\ell(-t^2)$ is the power series given by the same formula as $T_{tX}$ in Eq.\ (\ref{gEquivTorsion}), which splits naturally into three terms
\begin{eqnarray}
T_\ell(-t^2)&=&
-\frac{\cos\frac{(\ell+1)t}2}{\sin\frac{t}2}\sum_{m\geq1\atop m{\rm\,odd}}\left(2\zeta'(-m)
+{\cal H}_m\zeta(-m)\right)\frac{(-1)^{\frac{m+1}2}t^m}{m!}
\nonumber
\\&&+\sum_{m=1}^{|\ell+1|}\frac{\sin(2m-|\ell+1|)\frac{t}2}{\sin\frac{t}2}\log m
+\left(
\frac{\cos\frac{(\ell+1)t}2)}{t\sin\frac{t}2}
\right)^\#
\label{FormelTl}
\\&=:&\sum_{m=0}^\infty\frac{2(-1)^mt^{2m}}{(2m+1)!}T_1+\sum_{m=0}^\infty\frac{2(-1)^mt^{2m}}{(2m+1)!}T_2+\sum_{m=0}^\infty\frac{2(-1)^mt^{2m}}{(2m+1)!}T_3,
\end{eqnarray}
where $(t^{2m})^\#:=t^{2m}\cdot\left\{
{2{\cal H}_{2m+1}-{\cal H}_{m}\atop0}
\mbox{ if }{m\geq0,\atop m=-1}
\right.$ and $T_1,T_2,T_3$ depend on $m$ and $\ell$, but not on $t$.
Assume $\ell\geq-1$. By the definition of Bernoulli polynomials and the formula $B_m(x)=-m\zeta(1-m,x)$ ($m\in\BZ^+$, \cite[1.11(11)]{Erd1}) one gets the Taylor series
\begin{eqnarray}\label{DefBernoulli}
\frac{\cos\frac{(\ell+1)t}2}{t\sin\frac{t}2}&=&\sum_{m=0}^\infty B_{2m}(\frac\ell2+1)\frac{2(-1)^mt^{2m-2}}{(2m)!}
=\sum_{m=0}^\infty B_{2m}\left(-\frac\ell2\right)\frac{2(-1)^mt^{2m-2}}{(2m)!}
.
\end{eqnarray}
Therefore
\begin{eqnarray}\label{T3einfach}
T_3&=&-B_{2m+2}\left(-\frac\ell2\right)\frac{2{\cal H}_{2m+1}-{\cal H}_{m}}{2m+2}
\\&=&-(2{\cal H}_{2m+1}-{\cal H}_{m})\sum_{j=0}^{2m+2}\frac{(2m+1)!}{(2m+2-j)!j!}B_{2m+2-j}\cdot\left(-\frac\ell2\right)^j.
\label{T3voll}
\end{eqnarray}
Similarly
\begin{eqnarray}
\nonumber
T_1&=&\frac{(2m+1)!}{2(-1)^m}\cdot\Bigg(\sum_{r=0}^\infty B_{2r}\left(-\frac\ell2\right)\frac{2(-1)^rt^{2r-1}}{(2r)!}
\\&&\cdot\sum_{k=0}^\infty\left(2\zeta'(-2k-1)
+{\cal H}_{2k+1}\zeta(-2k-1)\right)\frac{(-1)^{k}t^{2k+1}}{(2k+1)!}\Bigg)^{[\deg_t=2m]}
\nonumber
\\&=&\sum_{k=0}^m
\left(2\zeta'(-2k-1)
-{\cal H}_{2k+1}\frac{B_{2k+2}}{2k+2}\right)\left({2m+1\atop2k+1}\right)B_{2m-2k}\left(-\frac\ell2\right)
\label{T1einfach}
\\&=&\sum_{k=0}^m\sum_{j=0}^{2m-2k}
\left(2\zeta'(-2k-1)
-{\cal H}_{2k+1}\frac{B_{2k+2}}{2k+2}\right)\frac{(2m+1)!B_{2m-2k-j}\cdot\left(-\frac\ell2\right)^j}{(2m-2k-j)!j!(2k+1)!}
\nonumber
\\&=&\sum_{j=0}^{2m}\sum_{k=0}^{\lfloor m-j/2\rfloor}
\left(2\zeta'(-2k-1)
-{\cal H}_{2k+1}\frac{B_{2k+2}}{2k+2}\right)\frac{(2m+1)!B_{2m-2k-j}\cdot\left(-\frac\ell2\right)^j}{(2m-2k-j)!j!(2k+1)!}.
\label{T1voll}
\end{eqnarray}
Using again the definition of Bernoulli polynomials, $T_2$ takes the form
\[
T_2=\sum_{k=1}^{\ell+1}B_{2m+1}\left(k-\frac\ell2\right)\log k.
\]
The polynomial $k\mapsto B_{2m+1}\left(k-\frac\ell2\right)$ is skew-symmetric around $k=\frac{\ell+1}2$. Hence we obtain by Propositions \ref{PHatStern} and \ref{AsympMitZeta}
\begin{eqnarray*}
\sum_{k=1}^{\ell+1}B_{2m+1}\left(k-\frac\ell2\right)\log k&=&
-2B_{2m+1}\left(k-\frac\ell2\right)^*(\ell+1)
\\&&+\log \ell\cdot\bz\left(B_{2m+1}\left(-k-\frac\ell2\right)\right)-\bzs\left(B_{2m+1}\left(k-\frac\ell2\right)\right)
\\&&+
\bz\left(B_{2m+1}\left(-k-\frac\ell2\right)\log^\ddagger\left(\frac{k+1}{\ell}+1\right)\right)^{[\deg_\ell>-\M]}
\\&&+\bz_{1,\ell,\M}^R\left(B_{2m+1}\left(-k-\frac\ell2\right)\right)
\\&=:&T_{21}+T_{22}+T_{23}+T_{24}+T_{25}.
\end{eqnarray*}
Using $B_{m}(x+y)=\sum_{h=0}^m\left({m\atop h}\right)B_{m-h}(x)y^h$ \cite[1.13(8)]{Erd1} and Proposition \ref{PHatStern} one obtains
\begin{eqnarray}
T_{21}&=&-\sum_{h=0}^{2m+1}\left({2m+1\atop h}\right)B_{2m+1-h}(-\frac\ell2)\cdot\frac{(\ell+1)^{h+1}}{(h+1)^2}
\nonumber
\\&=&-\sum_{h=1}^{2m+2}\left({2m+2\atop h}\right)B_{2m+2-h}(-\frac\ell2)\cdot\frac{(\ell+1)^{h}}{h(2m+2)}
\label{T21einfach}
\\&=&-\sum_{h=1}^{2m+2}\sum_{r=0}^{2m+2-h}\sum_{j=0}^{h}
\left({2m+2\atop r,2m+2-h-r,j,h-j}\right)\frac{B_{2m+2-h-r}}{h(2m+2)}\cdot(-\frac12)^r\ell^{r+j}
\nonumber
\\&=&-\sum_{r=0}^{2m+2}\sum_{h=1}^{2m+2-r}\sum_{j=0}^h
\left({2m+2\atop r,2m+2-h-r,j,h-j}\right)\frac{B_{2m+2-h-r}}{h(2m+2)}\cdot(-\frac12)^r\ell^{r+j}
\nonumber
\\&=&-\sum_{r=0}^{2m+2}\sum_{j=0}^{2m+2-r}\sum_{h=\max\{1,j\}}^{2m+2-r}
\left({2m+2\atop r,2m+2-h-r,j,h-j}\right)\frac{B_{2m+2-h-r}}{h(2m+2)}\cdot(-\frac12)^r\ell^{r+j}
\nonumber
\\&\stackrel{j\mapsto j-r\atop h\mapsto h-r}=&-\sum_{r=0}^{2m+2}\sum_{j=r}^{2m+2}\sum_{h=\max\{1+r,j\}}^{2m+2}
\left({2m+2\atop r,2m+2-h,j-r,h-j}\right)\frac{B_{2m+2-h}}{(h-r)(2m+2)}\cdot(-\frac12)^r\ell^j
\nonumber
\\&=&-\sum_{r=0}^{2m+2}\sum_{j=r}^{2m+2}\sum_{h=\max\{r+1,j\}}^{2m+2}
\left({2m+2\atop r,2m+2-h,j-r,h-j}\right)\frac{B_{2m+2-h}}{(h-r)(2m+2)}\cdot(-\frac12)^r\ell^j
\nonumber
\\&=&-\sum_{j=0}^{2m+2}\sum_{r=0}^j\sum_{h=\max\{r+1,j\}}^{2m+2}
\left({2m+2\atop r,2m+2-h,j-r,h-j}\right)\frac{B_{2m+2-h}}{(h-r)(2m+2)}\cdot(-\frac12)^r\ell^j
\nonumber
\\&=&-\sum_{j=0}^{2m+1}\sum_{h=j+1}^{2m+2}
\left(\sum_{r=0}^j\left({j\atop r}\right)\cdot\frac{(-\frac12)^r}{h-r}\right)
\left({2m+2\atop j,2m+2-h,h-j}\right)\frac{B_{2m+2-h}}{2m+2}\cdot\ell^j
\nonumber
\\&&-\sum_{j=1}^{2m+2}
\left(\sum_{r=0}^{j-1}\left({j\atop r}\right)\cdot\frac{(-\frac12)^r}{j-r}\right)
\left({2m+2\atop j}\right)\frac{B_{2m+2-j}}{2m+2}\cdot\ell^j
\nonumber
\end{eqnarray}
where the second sum in the last step corresponds to the case $h=j$. Applying $\sum_{r=0}^{j-1}\left({j\atop r}\right)\cdot\frac{(-\frac12)^{r-j}}{j-r}=\sum_{r=1}^j\frac{(-1)^r-1}r=-2{\cal H}_j+{\cal H}_{\lfloor j/2\rfloor}$ (\cite[p.\ 666]{KK}) and $\sum_{r=0}^{2m+1}\left({2m+1\atop r}\right)\cdot\frac{(-\frac12)^r}{2m+2-r}=\sum_{r=0}^{2m+1}\left({2m+2\atop r}\right)\cdot\frac{(-\frac12)^r}{2m+2}=0$ we get
\begin{eqnarray}
T_{21}&=&-\sum_{j=0}^{2m}\sum_{h=j+1}^{2m+2}
\left(\sum_{r=0}^j\left({j\atop r}\right)\cdot\frac{(-\frac12)^r}{h-r}\right)
\left({2m+2\atop j,2m+2-h,h-j}\right)\frac{B_{2m+2-h}}{2m+2}\cdot\ell^j
\nonumber
\\&&+\sum_{j=1}^{2m+2}
\left(2{\cal H}_j-{\cal H}_{\lfloor j/2\rfloor}\right)
\left({2m+2\atop j}\right)\frac{B_{2m+2-j}}{2m+2}\cdot(-\frac\ell2)^j.
\end{eqnarray}
When looking at top degree terms, replacing the sum over $r$ using Proposition \ref{nichtHa} provides the more suitable formula
\begin{eqnarray}
T_{21}&=&-\sum_{j=0}^{2m}\sum_{h=j+1}^{2m+2}
\left((-1)^j+\sum_{r=0}^{h-j-1}\left({j+r\atop r}\right)2^r\right)
\left({2m+2\atop h}\right)\frac{2^{-h}B_{2m+2-h}}{(h-j)(2m+2)}\cdot\ell^j
\nonumber
\\&&+\sum_{j=1}^{2m+2}
\left(2{\cal H}_j-{\cal H}_{\lfloor j/2\rfloor}\right)
\left({2m+2\atop j}\right)\frac{B_{2m+2-j}}{2m+2}\cdot(-\frac\ell2)^j.
\label{T21voll}
\end{eqnarray}
Using the formula for $B_{m}(x+y)$ again one also finds
\begin{eqnarray}
T_{23}&=&-\sum_{h=0}^{2m+1}\left({2m+1\atop h}\right)\bzs B_{2m+1-h}(k)\cdot(-\frac\ell2)^h
\label{T23voll}
\\&=&-\sum_{k=0}^{2m+1}\left({2m+1\atop k}\right)B_{2m+1-k}(-\frac\ell2)\zeta'(-k).
\nonumber
\end{eqnarray}
As $\zeta'(-2k)=(-1)^k\frac{(2k)!}{2(2\pi)^{2k}}\zeta(2k+1)$ for $k\in\BZ^+$ and $\zeta'(0)=-\log\sqrt{2\pi}$, the part of this sum for $k$ even equals
\begin{eqnarray}
T_{23}^{\rm even}=\frac{\log2\pi}2B_{2m+1}(-\frac\ell2)+\sum_{k=1}^{m}\frac{(-1)^{k+1}(2m+1)!}{2(2m+1-2k)!(2\pi)^{2k}}\zeta(2k+1)B_{2m+1-2k}(-\frac\ell2).
\label{T23einfach}
\end{eqnarray}
The odd part cancels with one half of the $\zeta'$-summand in $T_1$ as in Eq.\ (\ref{T1einfach}):
\begin{eqnarray*}
T_{23}^{\rm odd}&=&-\sum_{k=0}^{m}\left({2m+1\atop2k+1}\right)B_{2m-2k}(-\frac\ell2)\zeta'(-2k-1).
\end{eqnarray*}
We use the equation $B_m=(-1)^{m-1}m\zeta(1-m)$ and Euler's convolution identity $\forall m\in\BZ^+:\sum_{j=0}^m\left({m\atop j}\right)B_jB_{m-j}=-mB_{m-1}-(m-1)B_m$ \cite[p.\ 42 Eq.\ (13)]{Nielsen} to find the identity
\begin{eqnarray}\label{zetaBernoulli}\nonumber
\bz B_{m}(-k)&=&\sum_{j=0}^m\left({m\atop j}\right)B_j\cdot(-1)^{m-j}\zeta(j-m)
=\sum_{j=0}^m\left({m\atop j}\right)B_j\frac{B_{1+m-j}}{1+m-j}
\\&=&\frac1{m+1}\left(\sum_{j=0}^{m+1}\left({m+1\atop j}\right)B_jB_{1+m-j}-B_{m+1}\right)
=-B_{m+1}-B_m.
\end{eqnarray}
Thus
\begin{eqnarray}
T_{22}&=&\log\ell\cdot\sum_{j=0}^{2m+1}\left({2m+1\atop j}\right)\bz B_{2m+1-j}(-k)\cdot(-\frac\ell2)^j
\nonumber
\\&\stackrel{(\ref{zetaBernoulli})}=&-\log\ell\cdot\sum_{j=0}^{2m+1}\left({2m+1\atop j}\right)(B_{2m+2-j}+B_{2m+1-j})\cdot(-\frac\ell2)^j
\label{T22voll}
\\&=&-\log\ell\cdot\Bigg(\sum_{j=0}^{2m+2}\frac{2m+2-j}{2m+2}\left({2m+2\atop j}\right)B_{2m+2-j}\cdot(-\frac\ell2)^j+B_{2m+1}(-\frac\ell2)\Bigg)
\nonumber
\\&=&-\log\ell\cdot\big(B_{2m+2}(-\frac\ell2)+(1+\frac\ell2)B_{2m+1}(-\frac\ell2)\big).
\label{T22einfach}
\end{eqnarray}
Furthermore using
\begin{eqnarray}\label{zeta+1}
\bz((k+1)^n)\stackrel{(\ref{MultLogv0})}=\Phi(1,-n,2)+\frac1{n+1}=\zeta(-n)-1+\frac1{n+1}=\frac{(-1)^n}{n+1}B_{n+1}-1+\frac1{n+1}
\end{eqnarray}
one finds
\begin{eqnarray}
T_{24}&=&\bz\Bigg(\sum_{r=0}^{2m+1}\left({2m+1\atop r}\right)B_{2m+1-r}(-k)\cdot(-\frac\ell2)^r
\sum_{j=1}^\infty\frac{(-1)^{j+1}}j(\frac{k+1}\ell)^j\Bigg)^{[\deg_\ell>-\M]}
\nonumber
\\&=&\bz\sum_{j=1-\M}^{2m}\sum_{r=\max\{0,1+j\}}^{2m+1}\left({2m+1\atop r}\right)B_{2m+1-r}(-k)\cdot(k+1)^{r-j}\frac{(-1)^{j+1}\ell^j}{2^r(r-j)}
\nonumber
\\&=&\bz\sum_{j=1-\M}^{2m}\sum_{r=\max\{0,1+j\}}^{2m+1}\left({2m+1\atop r}\right)(-1)^{2m+1-r}B_{2m+1-r}(k+1)\cdot(k+1)^{r-j}\frac{(-1)^{j+1}\ell^j}{2^r(r-j)}
\nonumber
\\&=&\bz\sum_{j=1-\M}^{2m}\sum_{r=\max\{0,1+j\}}^{2m+1}\sum_{q=0}^{2m+1-r}\left({2m+1\atop r,q,2m+1-r-q}\right)B_{2m+1-r-q}\cdot(k+1)^{q+r-j}\frac{(-1)^{j+r}\ell^j}{2^r(r-j)}
\nonumber
\\&=&\bz\sum_{j=1-\M}^{2m}\sum_{r=\max\{0,1+j\}}^{2m+1}\sum_{q=r}^{2m+1}\left({2m+1\atop r,q-r,2m+1-q}\right)B_{2m+1-q}\cdot(k+1)^{q-j}\frac{(-1)^{j+r}\ell^j}{2^r(r-j)}
\nonumber
\\&=&\bz\sum_{j=1-\M}^{2m}\sum_{q=\max\{0,1+j\}}^{2m+1}\sum_{r=\max\{0,1+j\}}^q\left({2m+1\atop r,q-r,2m+1-q}\right)B_{2m+1-q}\cdot(k+1)^{q-j}\frac{(-1)^{j+r}\ell^j}{2^r(r-j)}
\nonumber
\\&=&\bz\sum_{j=1-\M}^{2m}\sum_{q=\max\{0,1+j\}}^{2m+1}\left(\sum_{r=\max\{0,1+j\}}^q\left({q\atop r}\right)\frac{(-\frac12)^r}{r-j}\right) \left({2m+1\atop q}\right)B_{2m+1-q}\cdot(k+1)^{q-j}(-\ell)^j
\nonumber
\\&\stackrel{(\ref{zeta+1})}=&\sum_{j=1-\M}^{2m}\sum_{q=\max\{0,1+j\}}^{2m+1}\left(\sum_{r=\max\{0,1+j\}}^q\left({q\atop r}\right)\frac{(-\frac12)^r}{r-j}\right)\cdot\left({2m+1\atop q}\right)
\nonumber
\\&&\cdot B_{2m+1-q}\cdot\left((-1)^{q-j}\frac{B_{q-j+1}}{q-j+1}-1+\frac1{q-j+1}\right)(-\ell)^j.
\label{T24einfach}
\end{eqnarray}
In the case $j<0$ Proposition \ref{nichtHa}, more specifically Eq.\ (\ref{nichtHa2}), provides again a different expression for the sum over $r$ as
\begin{eqnarray}
T_{24}&\stackrel{j<0}=&\sum_{j=1-\M}^{2m}\sum_{q=0}^{2m+1}\left(
\frac{1-2^{-q-1}\sum_{r=0}^{-j-1}\left({q+r\atop q}\right)2^{-r}}{-j\left({q-j\atop -j}\right)}
\right)\cdot\left({2m+1\atop q}\right)
\nonumber
\\&&\cdot B_{2m+1-q}\cdot\left((-1)^{q-j}\frac{B_{q-j+1}}{q-j+1}-1+\frac1{q-j+1}\right)(-\frac\ell2)^j.
\label{T24voll}
\end{eqnarray}
Finally one obtains
\begin{eqnarray}
T_{25}&=&\bz^R_{1,\ell,\M}
\sum_{q=0}^{2m+1}\left({2m+1\atop q}\right)B_{2m+1-q}(-k)\cdot\left(-\frac\ell2\right)^q
\nonumber
\\&=&\bz^R_{1,\ell,\M}
\sum_{q=0}^{2m+1}\sum_{r=0}^{2m+1-q}\left({2m+1\atop q,r,2m+1-q-r}\right)B_{2m+1-q-r}\cdot\frac{(-1)^{r+q}}{2^q}k^r\ell^q
\nonumber
\\&=&\sum_{q=0}^{2m+1}\sum_{r=0}^{2m+1-q}\left({2m+1\atop q,r,2m+1-q-r}\right)B_{2m+1-q-r}
\nonumber
\\&&\cdot\frac{(-1)^{r+q}}{2^q}
\frac{(-1)^{r+q+\M}\ell^{-\M}}{\left({q+r+\M\atop r}\right)\cdot(q+\M)}R(1,-q-r-\M,2,r,\ell)
\nonumber
\\&=&\sum_{q=0}^{2m+1}\sum_{r=0}^{2m+1-q}\frac{(2m+1)!(q-1+\M)!}{q!(2m+1-q-r)!(q+r+\M)!}B_{2m+1-q-r}
\nonumber
\\&&\cdot\frac{(-1)^\M\ell^{-\M}}{2^q}
R(1,-q-r-\M,2,r,\ell).
\end{eqnarray}

By (\ref{T21voll}) and (\ref{T1einfach}) the coefficients of $\ell^{2m+2}$ and $\ell^{2m+1}$ in $T_{21}$ cancel with the corresponding terms of $T_3$, as $2{\cal H}_{2m+2}-{\cal H}_{m+1}=2{\cal H}_{2m+1}-{\cal H}_{m}$. Furthermore by (\ref{T22voll}) and (\ref{T23einfach}) the top terms of $T_{22}$ and $T_{23}$ yield
$$
(T_{22}+T_{23})^{[\deg_\ell=2m+1]}=2^{-2m-2}\log\frac{\ell}{2\pi}.
$$
The coefficient of $l^{2m}$ is given by
\begin{eqnarray*}
&&(T_1+T_3+T_{21}+T_{22}+T_{23}+T_{24})^{[\deg_\ell=2m]}=
(\frac{2m+1}{2^{2m-1}}\zeta'(-1)-\frac{2m+1}{3\cdot2^{2m+2}})
\\&&-\frac{2m+1}{3\cdot 2^{2m+2}}(2{\cal H}_{2m+1}-{\cal H}_m)
+(\frac{1}{2^{2m+1}}-\frac1{2^{2m+2}}+(2{\cal H}_{2m}-{\cal H}_m)\frac{2m+1}{3\cdot2^{2m+2}})
\\&&+\frac{2m+1}{3\cdot2^{2m}}\log\ell
-(\frac{2m+1}{2^{2m}}\zeta'(-1)+\frac{2m+1}{2^{2m+2}}\log2\pi)
+\frac{7}{3\cdot2^{2m+3}}
\\&=&\frac1{3\cdot2^{2m+3}}\Big(
7-4m+(2+4m)\log\frac{\ell^4}{(2\pi)^3}+24(1+2m)\zeta'(-1)
\Big).
\end{eqnarray*}
Replacing $(-t^2)$ by $c_1(E)^2-4c_2(E)$ shows the claimed formula as
\begin{align*}
T_\pi(\mtr{{\cal O}(\ell)})^{[2n]}&=\left(e^{-\frac\ell2c_1(E)}T_\ell(c_1(E)^2-4c_2(E))\right)^{[2n]}
\\&=\sum_{r\geq0\atop r+2m=n}\frac1{r!}(-\frac\ell2c_1(E))^r\sum_{m=0}^{\lfloor n/2\rfloor}\frac{2(c_1(E)^2-4c_2(E))^m}{(2m+1)!}(T_1+T_2+T_3).\qedhere
\end{align*}
\end{proof}
\begin{Bem}Further coefficients of $T_1+T_3+T_{21}+T_{22}+T_{23}+T_{24}$, multiplied by the corresponding power of $\ell$, are given by
$$
\frac{l^{2m-1}}{3\cdot 2^{2m+2}}\Big(
5+18m-8m^2+8m(1+2m) (\log\frac{\ell}{\sqrt{2\pi}}-\frac3{2 \pi^2} \zeta(3)+6 \zeta'(-1))
\Big)
$$
and
\begin{eqnarray*}
&&\quad
\frac{l^{2m-2}}{2^{2m+3}}\Big(\frac1{135}\big(
-87 - 121 m + 840 m^2 - 152 m^3\big)
\\&&+4m(4 m^2 - 1) \big(
\frac{4}{45}\log\ell
-\frac{1}{\pi^2} \zeta(3)
+\frac{8}3 \zeta'(-3) +\frac{4}3\zeta'(-1)
\big)
\Big).
\end{eqnarray*}
\end{Bem}
\begin{Bem}
By \cite[Th. 11.2]{KSgenus}, the $R$-class term in the arithmetic Riemann--Roch is given by the first summand in Eq.\ (\ref{FormelTl}), corresponding to $T_1$. Hence the asymptotic of the difference of the torsion form and the $R$-class term is obtained by omitting the term given in Eq.\ (\ref{T1einfach}) at the right hand side in Th.\ \ref{asympTpi}. This term is of order $O(\ell^{2m})$.
\end{Bem}
Finally we verify that the highest degree term in Th.\ \ref{asympTpi} corresponds to the formula given by Puchol in \cite[Th. 1.3]{Puchol}:
\begin{lemma}\label{VglPuchol}  Using the curvature $\Omega^{{\cal O}(1)}$ of the ${\cal O}(1)$ bundle over the total space of the $\BP^1\BC$-fibration, the $2n$-component of the torsion form equals
\begin{eqnarray*}
T_\pi(\mtr{{\cal O}(\ell)})^{[2n]}&=&\frac12\left(
\int_{\BP^1\BC}\log\frac{\ell \mathring \Omega^{{\cal O}(1)}}{2\pi}e^{-\frac\ell{2\pi i}\Omega^{{\cal O}(1)}}
\right)^{[2n]}+o(\ell^{n+1}).
\end{eqnarray*}
\end{lemma}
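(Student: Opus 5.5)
The plan is to compute the right hand side explicitly as a fibre integral and then compare it with the top terms of Th.\ \ref{asympTpi}. The comparison only involves the $\ell^{n+1}\log\ell$ and $\ell^{n+1}$ terms in degree $2n$.

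First I would identify the curvature data on the fibration $\pi:\BP E\to B$. For $\mtr{\cal O}(1)$ with the metric induced from $E$, the form $\frac{-1}{2\pi i}\Omega^{{\cal O}(1)}=c_1({\cal O}(1))$ restricts on each fibre to the Fubini--Study form. The endomorphism $\mathring\Omega^{{\cal O}(1)}$ is defined through the fibrewise Kähler form $\omega^{T\BP^1}$ used for the torsion form. On the fibre $\BP^1\BC$ the endomorphism is a scalar by $\mathbf U(2)$-invariance, exactly as argued in the proof of Th.\ \ref{BVFinski}. With the normalisation of \cite{KSgenus} this scalar should be $1$ (Fubini--Study metric with $\|\alpha\|^2=2$, as in Ex.\ \ref{P1Beisp}). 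Hence $\log\frac{\ell\mathring\Omega^{{\cal O}(1)}}{2\pi}=\log\frac{\ell}{2\pi}$, a constant, and the right hand side reduces to
\[
\frac12\log\frac{\ell}{2\pi}\left(\int_{\BP^1\BC}e^{\ell c_1({\cal O}(1))}\right)^{[2n]}=\frac12\log\frac{\ell}{2\pi}\,\frac{\ell^{n+1}}{(n+1)!}\pi_*\big(c_1({\cal O}(1))^{n+1}\big).
\]

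Next I would evaluate $\pi_*(c_1({\cal O}(1))^{n+1})$ via the Grothendieck relation for $h=c_1({\cal O}(1))$ on $\BP E$, namely $h^2+c_1(E)h+c_2(E)=0$ (up to the sign conventions of the paper for ${\cal O}(1)$), together with $\pi_*h=1$. Pushing forward powers gives Segre classes. Concretely, with Chern roots $x_1,x_2$ of $E$, one gets $\pi_*h^{n+1}=\frac{(-x_1)^{n+1}-(-x_2)^{n+1}}{x_2-x_1}$ up to sign. Writing $x_{1,2}=\frac{c_1}2\pm\frac{\sqrt{c_1^2-4c_2}}2$ and expanding binomially gives
\[
\pi_*h^{n+1}=(-1)^n\sum_{m}\binom{n+1}{2m+1}\frac{c_1(E)^{n-2m}(c_1(E)^2-4c_2(E))^m}{2^n}.
\]
Multiplying by $\frac{\ell^{n+1}}{2(n+1)!}\log\frac\ell{2\pi}$ produces
\[
\sum_m\frac{c_1(E)^{n-2m}(c_1(E)^2-4c_2(E))^m}{(n-2m)!(2m+1)!}(-1)^n\frac{\ell^{n+1}}{2^{n+1}}\log\frac{\ell}{2\pi}.
\]
This is exactly the leading term in the second expression of Th.\ \ref{asympTpi}. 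All remaining terms there are $O(\ell^n\log\ell)=o(\ell^{n+1})$, which completes the proof.

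The main obstacle is fixing the signs and normalisations consistently. The points to pin down are the orientation/sign convention for ${\cal O}(1)$ relative to $E$ (which produces the $(-1)^n$ and the $e^{-\frac\ell2c_1(E)}$ in \cite[Th.\ 1.1]{KSgenus}), and the claim that $\mathring\Omega^{{\cal O}(1)}=1$ for the metric used in \cite{KSgenus}. To check them, I would first verify the case $n=0$. Here the identity reads $\frac12\log\frac\ell{2\pi}\cdot\ell$, which must match the $\ell\log\frac\ell{2\pi}$ coefficient $\frac12$ found in the proof of Th.\ \ref{asympTpi}, namely $(T_{22}+T_{23})^{[\deg_\ell=1]}$. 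I would also compare with the torsion of $\BP^1$ from Th.\ \ref{BVFinski}, where $\frac n2\log\frac\ell{2\pi}$ with $n=1$ matches. I would then use the case $n=1$ to fix the sign of $c_1(E)$.
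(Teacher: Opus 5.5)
Your proposal is correct and is essentially the paper's argument. In both, $\mathring\Omega^{{\cal O}(1)}={\rm id}$ reduces the right hand side to $\frac{\ell^{n+1}}{2(n+1)!}\log\frac{\ell}{2\pi}\,\pi_*c_1({\cal O}(1))^{n+1}$, which is then matched with the top term of Th.~\ref{asympTpi}. The paper evaluates this pushforward via $\pi^*c_1(E)=c_1(T\pi)-2c_1({\cal O}(1))$, $\pi^*(c_1(E)^2-4c_2(E))=c_1(T\pi)^2$ and $\int_Z c_1(T\pi)=2$ from \cite[Prop.\ 11.1]{KSgenus}. These relations are equivalent to your $h^2+c_1(E)h+c_2(E)=0$ and $\pi_*h=1$, with $c_1(T\pi)$ playing the role of the square root in your Chern-root expansion.

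They also show that your relation and signs hold exactly as written. So you do not need to fix the sign of $c_1(E)$ from the $n=1$ case of the identity, and doing so would be circular.
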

Here $\mathring \Omega^{{\cal O}(1)}$ is defined as in Th.\ \ref{BVFinski}.
\begin{proof}
In this case $\mathring \Omega^{{\cal O}(1)}={\rm id}_\BC$.
We denote the fibre by $Z$ and the vertical tangent space by $T\pi$. \cite[Prop.\ 11.1]{KSgenus} states that
$$
\pi^*c_1(E)=c_1(T\pi)-2c_1({\cal O}(1))\quad\mbox{and}\quad\pi^*(c_1(E)^2-4c_2(E))=c_1(T\pi)^2.
$$
Therefore using the projection formula and $\int_{\BP^1\BC}c_1(T\pi)=2$ we obtain
\begin{eqnarray*}
\lefteqn{
\frac{\ell^{n+1}}2\log\frac{\ell}{2\pi}\cdot
\int_Z \frac{c_1({\cal O}(1))^{n+1}}{(n+1)!}
}\\&=&
\frac{\ell^{n+1}}2\log\frac{\ell}{2\pi}\cdot
\int_Z\sum_{j=0}^{n+1}\frac1{2^n(n+1)!}\left({n+1\atop j}\right)c_1(T\pi)^j(-\pi^*c_1(E))^{n+1-j}
\\&\stackrel{m:=2j+1}=&
\frac{\ell^{n+1}}2\log\frac{\ell}{2\pi}\cdot
\int_Z\sum_{m=0}^{\lfloor n/2\rfloor}\frac{(-1)^{n-2m}}{2^{n+1}(2m+1)!(n-2m)!}
\\&&\cdot c_1(T\pi)\pi^*((c_1(E)^2-4c_2(E))^mc_1(E)^{n-2m})
\\&=&
\ell^{n+1}\log\frac{\ell}{2\pi}\cdot
\sum_{m=0}^{\lfloor n/2\rfloor}\frac{(-1)^{n}}{2^{n+1}(2m+1)!(n-2m)!}(c_1(E)^2-4c_2(E))^mc_1(E)^{n-2m}
\end{eqnarray*}
which is equal to the top most term in the last formula in Theorem \ref{asympTpi}.
\end{proof}

\end{document}